\documentclass[11pt]{amsart}
\usepackage{amssymb,amsmath,amsthm,enumitem,colonequals,mlmodern,tikz-cd,microtype}
\usepackage[cal=euler,bfcal,bb=px,bfbb]{mathalpha}
\usepackage{epsfig}
\usetikzlibrary{positioning}
\usepackage{url}
\usepackage{tcolorbox}
\usepackage{setspace}
\usepackage{fancyhdr}
\usepackage{pdfpages}
\usepackage{color}
\usepackage{verbatim}
\usepackage{fancyhdr}
\usepackage[top=2.5cm, bottom=2.5cm, left=2cm, right=2cm]{geometry}
\usepackage{xcolor}
\colorlet{darkblue}{blue!55!black}
\colorlet{darkcyan}{cyan!50!black}
\colorlet{darkgreen}{green!60!black} 

\PassOptionsToPackage{hyphens}{url}

\def\eqref#1{\textcolor{darkblue}{(\ref{#1})}}

\usepackage[nameinlink]{cleveref} 
\Crefformat{section}{#2\S#1#3}
\Crefmultiformat{section}{#2\S\S#1#3}{ and~#2#1#3}{, #2#1#3}{, and~#2#1#3}

\usepackage[pagewise]{lineno}
\let\oldequation\equation
\let\oldendequation\endequation
\renewenvironment{equation}{\linenomathNonumbers\oldequation}{\oldendequation\endlinenomath}
\expandafter\let\expandafter\oldequationstar\csname equation*\endcsname
\expandafter\let\expandafter\oldendequationstar\csname endequation*\endcsname
\renewenvironment{equation*}{\linenomathNonumbers\oldequationstar}{\oldendequationstar\endlinenomath}
\let\oldalign\align
\let\oldendalign\endalign
\renewenvironment{align}{\linenomathNonumbers\oldalign}{\oldendalign\endlinenomath}
\expandafter\let\expandafter\oldalignstar\csname align*\endcsname
\expandafter\let\expandafter\oldendalignstar\csname endalign*\endcsname
\renewenvironment{align*}{\linenomathNonumbers\oldalignstar}{\oldendalignstar\endlinenomath}

\newcommand {\Hom} {\mathsf{Hom}}
\newcommand {\End} {\mathcal E\mathsf{nd}}

\newcommand  {\rank}     {\textsf{rank}}

\DeclareMathOperator{\Tot}{Tot}

\DeclareMathOperator{\Aut}{Aut}

\theoremstyle{plain}
\newtheorem{theorem}{Theorem}[section]
\newtheorem{lemma}[theorem]{Lemma}
\newtheorem{corollary}[theorem]{Corollary}
\newtheorem{proposition}[theorem]{Proposition}

\theoremstyle{definition}
\newtheorem{definition}[theorem]{Definition}

\newtheorem{remark}[theorem]{Remark}

\newtheorem{notation}[theorem]{Notation}

\newtheorem*{ack}{Acknowledgments}

\AddToHook{env/conjecture/begin}{\crefalias{theorem}{conjecture}}
\AddToHook{env/lemma/begin}{\crefalias{theorem}{lemma}}
\AddToHook{env/corollary/begin}{\crefalias{theorem}{corollary}}
\AddToHook{env/proposition/begin}{\crefalias{theorem}{proposition}}
\AddToHook{env/definition/begin}{\crefalias{theorem}{definition}}
\AddToHook{env/remark/begin}{\crefalias{theorem}{remark}}
\AddToHook{env/example/begin}{\crefalias{theorem}{example}}
\AddToHook{env/hypothesis/begin}{\crefalias{theorem}{hypothesis}}
\AddToHook{env/notation/begin}{\crefalias{theorem}{notation}}

\numberwithin{equation}{section}
\numberwithin{theorem}{section}

\title{Infinitesimal Deformations of Generalized Parabolic Hitchin Pairs}
\author{Sourav Das}
\email{sdas6565@gmail.com}
\date{\today}

\begin{document}
\maketitle

\begin{abstract}
We develop the infinitesimal deformation theory of generalized parabolic Hitchin pairs (GPHs) on irreducible nodal curves. For any GPH $(E,\phi,F(E))$ we associate an explicit three-term deformation complex $\mathcal{C}_{(E_\bullet,\phi)}$ whose first hypercohomology $\mathbb{H}^1(\mathcal{C}_{(E_\bullet,\phi)})$ parameterizes first-order deformations.

As the main application, we prove that the coarse moduli space $\mathcal{M}_{\mathrm{GPH}}$ of $1$-stable generalized parabolic Hitchin pairs of rank $n$ and degree $d$ with $\gcd(n,d)=1$ is a local complete intersection, smooth in codimension one, and hence a normal variety. We further show that this moduli space carries a natural Poisson structure.
\end{abstract}

\tableofcontents

\section{\textbf{Introduction}}

 The study of moduli spaces of vector bundles and their generalizations on singular curves has been a central theme in algebraic geometry for several decades \cite{zbMATH06869614, Logares2019}. A particularly fruitful approach to understanding torsion-free sheaves on nodal curves was pioneered by U.~N.~Bhosle, who introduced the notion of \emph{generalized parabolic bundles} (GPBs) in \cite{Bh1}. These objects provide a natural desingularization of the moduli space of torsion-free sheaves on a nodal curve in the coprime rank--degree case. The construction has found numerous deep applications: these ideas played a key role in the proof of the factorization theorem for Verlinde numbers by Ramadas and Narasimhan \cite{NR, R}, appears prominently in Seshadri's ICTP notes \cite{ICTP}, and has been used extensively in the study of degenerations of moduli spaces of vector bundles on smooth curves \cite{Ka, Sun, BBN}. It is worth mentioning that generalized parabolic structures also play a role in the study of principal bundles on nodal curves, as clarified in the work of Schmitt \cite{Schmitt} and Mu\~noz Casta\~neda \cite{Munoz-GP, Munoz-singular, Munoz-Schmitt}.

Motivated by these developments, Bhosle extended the theory to \emph{generalized parabolic Hitchin pairs} (GPHs) in \cite{Bh2}. A GPH consists of a vector bundle \(E\) on the normalization \(\widetilde{X}_0\) of a nodal curve \(X_0\), equipped with a Higgs field \(\phi\) compatible with a generalized parabolic structure at the preimages of the node. Bhosle constructed the moduli space of GPHs and described the fibres of the associated Hitchin map \cite{Bh2}. The properness of this Hitchin map, together with a detailed comparison with a natural semistable degeneration of the moduli of Higgs bundles constructed by Balaji-Barik-Nagaraj \cite{BBN}, was later established by Das in \cite{D}.

The moduli theory of Higgs bundles and semistable pairs was laid down by the foundational work of Hitchin \cite{HI, HII} and Nitsure \cite{Nit}. This theory has since been extended to nodal and, more generally, singular curves: Balaji, Barik, and Nagaraj \cite{BBN} constructed a semistable degeneration of the moduli space of Higgs bundles, and Ben-Bassat, Das, and Pantev \cite{BDP} constructed a comprehensive framework for moduli stacks of Higgs bundles on stable curves. Poisson structures on the associated moduli spaces have also been studied in \cite{Kydonakis, DI}.

 In this article, we carry out a systematic study of the \emph{infinitesimal deformation theory} of generalized parabolic Hitchin pairs, a theory that has, until now, remained absent from the literature. Such a theory is essential for understanding the local structure of the moduli space (its singularities and smoothness), for establishing global geometric properties such as normality and the local complete intersection property, and for comparing different compactifications and degenerations of Higgs bundle moduli spaces.

 For a smooth projective curve, irreducibility and normality of the moduli space of semistable Higgs bundles is comparatively well understood. Hitchin \cite{HI, HII} constructed the moduli space of rank-two Higgs bundles as a smooth complex manifold, and deduced connectedness, hence irreducibility. Nitsure \cite{Nit} extended the construction to arbitrary rank as a quasi-projective scheme and established smoothness at points corresponding to stable pairs via an explicit deformation complex; since $\gcd(n,d)=1$ forces every semistable point to be stable, the whole moduli space is smooth in this case. Alternatively, normality on the Betti side transfers to the Dolbeault (Higgs bundle) side via Simpson's \emph{equisingularity principle} \cite{Simpson2}: since the differential graded Lie algebras governing the deformation theories of a flat connection and its corresponding Higgs bundle are formal and share the same cohomology (Goldman--Millson \cite{GM}), the germs of the de Rham/Betti and Dolbeault moduli spaces at corresponding semisimple points are analytically isomorphic. Simpson establishes normality on the Betti side - for representations of a genus-$\geq 2$ surface group of degree $0$ - by showing it is a local complete intersection and invoking Serre's criterion, and this transfers to the Dolbeault side by the equisingularity principle.

Before returning to the discussion of moduli of Higgs bundles (torsion-free sheaves) on nodal curves: for the (Higgs-field-free) moduli space of semistable torsion-free sheaves on a nodal curve, the parameter scheme is not smooth, or even normal, at every point -- its local structure depends on the local type of the sheaf at the node. Seshadri \cite{CS}, and later Faltings \cite{Fal} by a different method, determined the analytic local ring of this moduli space at a stable sheaf $F$ whose local type at the node is the most degenerate one, $F_p \cong \mathfrak m^{\oplus n}$ (where $\mathfrak m$ is the maximal ideal of $\mathcal O_{X_0,p}$), showing that it is formally smooth over $\mathbb C[[X,Y]]/(XY-YX)$, the ring governing pairs of commuting $n\times n$ matrices. Seshadri then reduced the question of reducedness of the moduli space itself to that of this commuting-variety local model, which was settled by Cowsik for $n=2$ and by Strickland, using Schubert calculus, for general $n$. Only with this explicit local analysis in hand could reducedness, and eventually Cohen--Macaulayness, of the moduli space of torsion-free sheaves on a nodal curve be established. It is worth recalling here that Bhosle's moduli of GPBs provides a desingularization of the moduli of torsion-free sheaves.

\subsubsection{The difficulties of our problem}
The Higgs case on a nodal curve is somewhat more complicated. The analogous Quot scheme for Higgs pairs is the spectrum of the symmetric algebra of a non-locally-free sheaf over the Quot scheme of torsion-free sheaves. In this article we do not study this moduli problem; instead, we study the moduli of GPHs on the normalization of the nodal curve. Even so, the Quot scheme of GPHs is constructed as the vanishing locus of a section of a certain universal vector bundle over the Quot scheme of vector bundles on the normalization \cite[Appendix, PhD Thesis]{D}. It is therefore not immediate that this Quot scheme is normal, and this in turn requires the explicit deformation-theoretic argument that this article develops.
 
 \subsubsection{Constribution of this work} We introduce an explicit three-term deformation complex \(\mathcal{C}_{(E_\bullet,\phi)}\) whose first hypercohomology \(\mathbb{H}^1(\mathcal{C}_{(E_\bullet,\phi)})\) parametrizes the first-order infinitesimal deformations of a GPH \((E,\phi,F(E))\) (Theorem~\ref{thm:def-GPH}). Using short exact sequences of complexes and the hypercohomology spectral sequence (developed in the appendix \ref{Append}), we perform detailed computations of the obstruction spaces. In particular, when the underlying vector bundle is stable, we prove that \(\dim \mathbb{H}^2(\mathcal{C}_{(E_\bullet,\phi)}) = 1\) in two key cases: when the generalized parabolic structure has full rank (\(\operatorname{rank} A = n\)) and when \(\operatorname{rank} A = n-1\) and the underlying bundle is stable (Section~\ref{sec:computations}).

As the main geometric application, we show that the coarse moduli space of stable generalized parabolic Hitchin pairs of rank \(n\) and degree \(d\) with \(\gcd(n,d)=1\) is a local complete intersection that is smooth in codimension one, and is therefore a normal variety (Theorem~\ref{NT}).

This has immediate consequences. There exists a natural log-symplectic form on the smooth locus of \(\mathcal{M}_{\mathrm{GPH}}\) (outside a codimension-two subset). Since $\mathcal M_{\mathrm{GPH}}$ is normal, this form extends as a Poisson structure on the whole moduli space. 

\subsubsection{Statements of the main results} We now state the main results of the paper. Let \(X_0\) be an irreducible projective nodal curve of genus \(g \geq 2\) with exactly one node \(p\). Let \(\pi \colon \widetilde{X}_0 \to X_0\) be its normalization. Then \(\widetilde{X}_0\) is a smooth projective curve of genus \(g-1\), and \(\pi^{-1}(p) = \{x^+, x^-\}\) consists of two distinct points. We will sometimes denote the divisor by $D:=\{x^+, x^-\}$. Also, for a vector bundle $E$ on $\widetilde{X_0}$, we denote by $E_D:=E_{x^+}\oplus E_{x^-}$.

\begin{theorem}[Deformation complex for GPHs]
The isomorphism classes of first-order infinitesimal deformations of a generalized parabolic Hitchin pair \((E_{\bullet},\phi):=(E,\phi, F(E))\) are canonically parametrized by the first hypercohomology \(\mathbb{H}^1(\mathcal{C}_{(E_{\bullet}, \phi)})\), where \(\mathcal{C}_{(E_{\bullet}, \phi)}\) is the three-term complex of sheaves on the nodal curve \(X_0\):
\[
\mathcal{C}_{(E_{\bullet}, \phi)} \colon \
\pi_*\End(E) \ \xrightarrow{d^0}\
\pi_*\End(E)\otimes\omega_{X_0} \oplus \Hom(F,E_D/F)
\ \xrightarrow{d^1}\
\Hom(F,E_D/F).
\]
The differentials \(d^0\) and \(d^1\) are given explicitly in Theorem~\ref{thm:def-GPH}.
\end{theorem}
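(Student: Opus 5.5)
The plan is the standard Čech argument for Higgs-type objects, adapted so that the generalized parabolic datum is a point of a Grassmannian that must stay compatible with $\phi$.

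\textbf{Setup and the differentials.} A GPH is a triple $(E,\phi,F)$. Here $E$ is a bundle on $\widetilde X_0$, and $\phi\colon E\to E\otimes\pi^*\omega_{X_0}$ is a Higgs field, equivalently a section of $\pi_*\End(E)\otimes\omega_{X_0}$ by the projection formula. Finally $F\subset E_D$ is an $n$-dimensional subspace preserved by the residue $\phi_D$ of $\phi$ at $D$, i.e.\ $\phi_D(F)\subset F$. The differentials will then be
\[
d^0(s)=\bigl([s,\phi],\ \overline{s_D}|_F\bigr),\qquad d^1(\dot\phi,\beta)=\overline{\dot\phi_D}|_F+\beta\circ\phi_D|_F-\overline{\phi_D}\circ\beta .
\]
Here the bar denotes the composite with $E_D\to E_D/F$. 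The last two terms of $d^1$ are the natural action of $\phi_D$ on $\Hom(F,E_D/F)$, which makes sense because $\phi_D$ preserves $F$, so it induces endomorphisms of both $F$ and $E_D/F$.

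\textbf{The complex.} The first step is to check $d^1\circ d^0=0$. Plugging in gives
\[
\overline{[s,\phi]_D}|_F+\overline{s_D}\phi_D|_F-\overline{\phi_D}\,\overline{s_D}|_F .
\]
The bracket $[s,\phi]_D$ expands as $s_D\phi_D-\phi_D s_D$. Its first piece cancels against $\overline{s_D}\phi_D|_F$ after composing with $E_D\to E_D/F$. Its second piece cancels against $\overline{\phi_D}\,\overline{s_D}|_F$, because $\overline{\phi_D s_D v}$ depends only on $\overline{s_D v}$ (as $\phi_D F\subset F$). I will fix the signs so that these cancellations occur. The skyscraper terms $\Hom(F,E_D/F)$ are regarded as sheaves supported at $p$.

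\textbf{Deformations to cocycles.} Take a first-order deformation $(\mathcal E,\Phi,\mathcal F)$ over $\mathbb C[\epsilon]$ and choose an affine open cover $\{U_i\}$ of $X_0$ with only $U_0$ containing $p$. The steps are as follows.
\begin{enumerate}
\item On each $\pi^{-1}(U_i)$, trivialize $\mathcal E\cong E[\epsilon]$. The transition data give $s_{ij}\in\Gamma(U_{ij},\pi_*\End E)$.
\item Write $\Phi=\phi+\epsilon\dot\phi_i$ locally. This gives $\dot\phi_i\in\Gamma(U_i,\pi_*\End E\otimes\omega)$, and the change of trivialization forces $\dot\phi_i-\dot\phi_j=[s_{ij},\phi]$.
\item On $U_0$, the deformed subspace is the graph of some $\beta\in\Hom(F,E_D/F)$. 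This is the tangent space of the Grassmannian, and $U_0$ is the only open set meeting the support.
\item Invariance of $\mathcal F$ under $\Phi_D$ at order $\epsilon$ gives exactly $d^1(\dot\phi_0,\beta)=0$.
\end{enumerate}
So $(s_{ij},(\dot\phi_i,\beta))$ is a Čech 1-cocycle of the total complex. Changing the trivializations by $1+\epsilon t_i$ alters this cocycle by the Čech--total coboundary of $(t_i)$, and isomorphisms of deformations act in the same way. Conversely, every cocycle glues to a deformation. Together these give the bijection with $\mathbb H^1(\mathcal C_{(E_\bullet,\phi)})$, and canonicity follows because refining the cover is compatible with all of this.

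\textbf{Main obstacle.} I expect the hardest part to be the bookkeeping at the node. One must verify that the skyscraper terms behave correctly under Čech coboundaries, and that "$\phi$ preserves $F$" is the right compatibility condition: depending on the definition it could instead be $\phi_D(F)\subset F$ together with a residue condition. The place to start is Bhosle's definition together with the identification $T_F\mathrm{Gr}=\Hom(F,E_D/F)$. Separately, I would confirm that the third term of $\mathcal C_{(E_\bullet,\phi)}$ receives no contribution from $\Hom(F,E_D/F)$ under $d^0$ in degree $2$ beyond what is listed above.
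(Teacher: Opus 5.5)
Your plan is the same Čech total-complex argument the paper uses: transition functions $1+\epsilon B_{ij}$, local Higgs perturbations, local flag perturbations, the resulting cocycle conditions, and gauge changes $1+\epsilon t_i$ acting as coboundaries. The one difference is a harmless simplification: taking a cover in which only $U_0$ contains $p$ kills the \v{C}ech $C^1$ of the skyscraper term, so the paper's flag-transition condition $\overline{B_{ij}\circ F}+G_i-G_j=0$ becomes vacuous.

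One correction: your displayed $d^1$ has the wrong sign on the $\beta$-terms. With your $d^0(s)=([s,\phi],\overline{s_D}|_F)$, the pieces in your check add rather than cancel, giving $2\bigl(\overline{s_D}\,\phi_D|_F-\overline{\phi_D s_D}|_F\bigr)$. Expanding $(\phi+\epsilon\dot\phi)(F+\epsilon\tilde\beta)\subset\operatorname{Im}(F+\epsilon\tilde\beta)$ actually forces
\[
d^1(\dot\phi,\beta)=\overline{\dot\phi_D}|_F+\overline{\phi_D}\circ\beta-\beta\circ\phi_D|_F .
\]
This is the paper's formula, and it does satisfy $d^1\circ d^0=0$.
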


The above theorem extends straightforwardly to any nodal curve with an arbitrary number of nodes. The only modifications required are local at each node; the global arguments remain unchanged.

\begin{lemma}[Obstruction space]
Let \((E, F(E):=\Gamma_A, \phi)\) be a generalized parabolic Higgs bundle on the nodal curve \(X_0\) such that $A: E_{x^+}\to E_{x^-}$ is a map of vector spaces and $\Gamma_A$ is the graph of $A$ and such that the underlying bundle $E$ is stable.
\begin{enumerate}
\item If \(\operatorname{rank} A = n\) (full rank gluing), then \(\dim \mathbb{H}^2(\mathcal{C}_{(E_{\bullet},\phi)}) = 1\).
\item If \(\operatorname{rank} A = n-1\), then \(\dim \mathbb{H}^2(\mathcal{C}_{(E_{\bullet},\phi)}) = 1\).
\end{enumerate}
\end{lemma}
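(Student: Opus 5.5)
Since the differentials $d^0,d^1$ of $\mathcal{C}_{(E_\bullet,\phi)}$ are only described in Theorem~\ref{thm:def-GPH}, I will take them in the expected form: $d^0(s)=([s,\phi],\ \text{induced action of }s\text{ on }F\to E_D/F)$, and $d^1$ the difference between the action of the Higgs field on $\Hom(F,E_D/F)$ and the residue/compatibility term. The plan is to compute $\mathbb{H}^2$ by splitting off the skyscraper part. The term $\Hom(F,E_D/F)$ is supported at the node $p$, so I use the short exact sequence of complexes
\[
0\to \mathcal{K}^\bullet \to \mathcal{C}_{(E_\bullet,\phi)} \to \mathcal{S}^\bullet\to 0,
\]
where $\mathcal{K}^\bullet=[\pi_*\End(E)\xrightarrow{[\,\cdot\,,\phi]}\pi_*\End(E)\otimes\omega_{X_0}]$ is the Higgs part. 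The quotient $\mathcal{S}^\bullet=[0\to\Hom(F,E_D/F)\xrightarrow{\mathrm{id}}\Hom(F,E_D/F)]$ is acyclic, or at worst has cohomology only in degree $2$, depending on the exact shape of $d^1$. If instead the natural subcomplex is the skyscraper one, I will run the same argument with the roles reversed.

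Since skyscraper sheaves have no higher cohomology, the long exact hypercohomology sequence reduces $\mathbb{H}^2(\mathcal{C})$ to $\mathbb{H}^2(\mathcal{K}^\bullet)$, corrected by the cokernel of the induced map on $H^0$ of the point terms. To handle $\mathbb{H}^2(\mathcal{K}^\bullet)$, I note that $\pi$ is finite, so $H^i(X_0,\pi_*\mathcal{G})=H^i(\widetilde X_0,\mathcal{G})$. Moreover $\pi^*\omega_{X_0}=K_{\widetilde X_0}(D)$. The hypercohomology spectral sequence from the appendix then gives
\[
\mathbb{H}^2(\mathcal{K}^\bullet)=\operatorname{coker}\big(H^1(\End E)\to H^1(\End E\otimes K(D))\big),
\]
and by Serre duality this is dual to $\ker\big(H^0(\End E(-D))\xrightarrow{[\,\cdot\,,\phi]}H^0(\End E\otimes K)\big)$. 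Stability of $E$ makes $E$ simple, so $H^0(\End E)=\mathbb{C}$ and $H^0(\End E(-D))=0$. Thus $\mathbb{H}^2(\mathcal{K}^\bullet)=0$, and the obstruction is concentrated in the node contribution.

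The main step, and the real obstacle, is the local linear algebra at the node: computing the cokernel of
\[
H^0(\End E)\oplus(\text{degree-}1\text{ point terms})\ \to\ \Hom(\Gamma_A,E_D/\Gamma_A)\cong\Hom(E_{x^+},E_{x^-}),
\]
together with the residue condition coming from $\omega_{X_0}$. Here residues at $x^\pm$ must be opposite and, for a GPH, $\phi$ has residues preserving $\Gamma_A$. The $\mathrm{res}_{x^+}+\mathrm{res}_{x^-}$ term contributes a trace-type one-dimensional cokernel, dual to the scalars $H^0(\End E)=\mathbb{C}$ acting trivially.

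In case (1), $A$ is an isomorphism. An infinitesimal change of $A$ is $\dot A=s_{x^-}A-As_{x^+}+(\text{residue terms})$, and the compatibility condition $\mathrm{res}_{x^-}\phi\, A=A\,\mathrm{res}_{x^+}\phi$ pins down the image. I expect the image to have codimension exactly one, cut out by a trace functional $\operatorname{tr}(A^{-1}\cdot)$; this gives $\dim\mathbb{H}^2=1$.

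In case (2), with $\operatorname{rank}A=n-1$, I will choose bases in which $A=\mathrm{diag}(1,\dots,1,0)$ and repeat the computation. The extra kernel and cokernel directions of $A$ should contribute to deformations in $\mathbb{H}^1$ rather than to obstructions, while the trace functional restricted to the rank-$(n-1)$ block survives. This again gives dimension $1$.

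The delicate point is verifying that no further cokernel appears from the interaction between $d^1$ and the residues. I would check this by an explicit dimension count, using $\chi(\mathcal{C})$ computed by Riemann--Roch together with $\dim\mathbb{H}^0=1$ (the scalars) and the known dimension of $\mathbb{H}^1$.
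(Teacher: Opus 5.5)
\textbf{The gap is in case (2): the mechanism you predict there is backwards.} Take $A=\operatorname{diag}(I_{n-1},0)$, and write the residues of $s\in H^0(\End E\otimes\Omega^1_{\widetilde X_0}(x^++x^-))$ as $s_{x^+}=\begin{pmatrix}B&u\\ v&a\end{pmatrix}$ and $s_{x^-}=\begin{pmatrix}C&w\\ z&b\end{pmatrix}$. Then $s_{x^-}A+As_{x^+}=\begin{pmatrix}B+C&u\\ z&0\end{pmatrix}$. The only global constraint is $\operatorname{tr}B+a+\operatorname{tr}C+b=0$. Since $a$ and $b$ never appear in the image, they absorb this constraint, so the trace of the $(n-1)$-block is \emph{not} cut out. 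This holds in any sign convention.

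The actual obstruction is the $(n,n)$-entry, i.e.\ the composite $\ker A\to E_{x^+}\xrightarrow{X}E_{x^-}\to\operatorname{coker}A$. This is exactly the direction you said would feed $\mathbb{H}^1$. It is never hit, for two reasons. First, $s_{x^-}A$ kills $\ker A$ and $As_{x^+}$ lands in $\operatorname{im}A$. Second, compatibility together with the residue theorem for $\operatorname{tr}\phi$ forces $\phi_{x^+}=\begin{pmatrix}M&0\\ *&l\end{pmatrix}$ and $\phi_{x^-}=\begin{pmatrix}-M&*\\ 0&-l\end{pmatrix}$. Hence the $(n,n)$-entry of $\phi_{x^-}G+G\phi_{x^+}$ is $-lS+Sl=0$, where $S$ is the $(n,n)$-entry of $G$.

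Conversely, every matrix with vanishing $(n,n)$-entry is hit with $G=0$: choose $a+b=-\operatorname{tr}(B+C)$ and lift the residues using stability. This is the computation the paper does. So the value $1$ is right, but your sketch would single out the wrong functional and does not prove it.

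\textbf{The rest is closer to the paper but incomplete.} Your reduction agrees in effect with the paper's. Since $H^1(\End E\otimes\Omega^1_{\widetilde X_0}(x^++x^-))\cong H^0(\End E(-x^+-x^-))^\vee=0$, everything reduces to $\operatorname{coker}\bigl(H^0(\mathcal{C}^1)\to H^0(\mathcal{C}^2)\bigr)$. The source of this map is $H^0(\End E\otimes\Omega^1_{\widetilde X_0}(x^++x^-))\oplus\Hom(F,E_D/F)$, not $H^0(\End E)$.

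Your setup has two further errors:
\begin{enumerate}
\item The Higgs part is a quotient of $\mathcal{C}_{(E_\bullet,\phi)}$, not a subcomplex, because both $d^0$ and $d^1$ have flag components.
\item The skyscraper piece has differential $G\mapsto\phi_{x^-}G+G\phi_{x^+}$, not the identity. As an endomorphism of a finite-dimensional space, it has kernel and cokernel of equal (here positive) dimension. So it is neither acyclic nor concentrated in degree $2$.
\end{enumerate}

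In case (1) you have the right functional $\operatorname{tr}(A^{-1}\,\cdot\,)$, but you only ``expect'' codimension one. Two facts are needed. First, the residue map onto $\{(X_+,X_-):\operatorname{tr}X_++\operatorname{tr}X_-=0\}$ is surjective: its cokernel embeds in $H^1(\End E\otimes\Omega^1_{\widetilde X_0})\cong H^0(\End E)^\vee=\mathbb{C}$. Second, the $\phi$-term is a twisted commutator, hence killed by $\operatorname{tr}(A^{-1}\,\cdot\,)$.

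Watch the sign here. With actual residues the term is $s_{x^-}A+As_{x^+}$. Your $s_{x^-}A-As_{x^+}$, combined with $\operatorname{tr}s_{x^+}+\operatorname{tr}s_{x^-}=0$, would be surjective and give $0$.

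Finally, your proposed Euler-characteristic check is circular, since $\dim\mathbb{H}^1$ is not known independently.
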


\begin{theorem}[Normality]
Let \(\mathcal{M}_{\mathrm{GPH}}\) be the coarse moduli space of stable generalized parabolic Hitchin pairs of rank \(n\) and degree \(d\) with \(\gcd(n,d)=1\). Then \(\mathcal{M}_{\mathrm{GPH}}\) is a local complete intersection that is smooth in codimension one, and is therefore a normal variety.
\end{theorem}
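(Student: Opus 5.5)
The plan is to realize $\mathcal{M}_{\mathrm{GPH}}$, locally at every point, as the fibre of a Kuranishi-type map between smooth spaces, and then to bound the dimension of the bad loci. Since $\gcd(n,d)=1$, every $1$-semistable GPH is $1$-stable. Its automorphisms are therefore scalars, and $\mathbb{H}^0(\mathcal{C}_{(E_\bullet,\phi)})=\mathbb{C}$.

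By the deformation complex theorem, the completed local ring of $\mathcal{M}_{\mathrm{GPH}}$ at $[(E,\phi,F(E))]$ is a hull for the deformation functor. This gives a presentation
\[
\widehat{\mathcal{O}}_{\mathcal{M},[E_\bullet,\phi]}\cong \mathbb{C}[[\mathbb{H}^1]]/(f_1,\dots,f_r),
\]
where $r$ is at most the dimension of the \emph{effective} obstruction space. Scalars act trivially, so I would replace $\mathbb{H}^2$ by the kernel of the trace map $\mathbb{H}^2(\mathcal{C}_{(E_\bullet,\phi)})\to H^1(\omega_{X_0})\cong\mathbb{C}$. Equivalently, I would use the trace-free subcomplex, which should split off the trivial summand $\mathcal{O}\to\omega\to 0$. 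The obstruction for the determinant data is unobstructed, so the trace part contributes nothing.

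I would then compute the expected dimension $\chi$ from the three-term complex. Up to sign, this is $2\dim\mathrm{End}$ plus the parabolic contribution $n^2$ from $\Hom(F,E_D/F)$, corrected by the last term. I would show that the actual dimension of $\mathcal{M}_{\mathrm{GPH}}$ equals $\dim\mathbb{H}^1-\dim\mathbb{H}^2_0$ at every point. To do this, I would compare with a point where $\mathbb{H}^2_0=0$, which exists generically (for example, $E$ stable and $A$ an isomorphism, where the lemma gives $\dim\mathbb{H}^2=1$, i.e. $\mathbb{H}^2_0=0$). I would combine this with irreducibility, or at least equidimensionality, of $\mathcal{M}_{\mathrm{GPH}}$ known from Bhosle's construction as a closed subscheme of the Higgs moduli over GPB data. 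Krull's principal ideal theorem then forces $f_1,\dots,f_r$ to be a regular sequence. Hence $\mathcal{M}_{\mathrm{GPH}}$ is a local complete intersection, and in particular Cohen--Macaulay.

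For smoothness in codimension one, the point is that $\mathbb{H}^2_0=0$ forces smoothness. So the singular locus is contained in the locus where $\mathbb{H}^2_0\neq 0$. By the obstruction lemma, that locus lies inside the set of points where either $E$ is not stable, or $E$ is stable with $\operatorname{rank}A\le n-2$. I would estimate each stratum separately.

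\begin{itemize}
\item \textbf{Stratum $\operatorname{rank}A\le n-2$.} Graphs $\Gamma_A$ with $\operatorname{rank}A\le n-2$ have codimension $4$ among $n$-dimensional subspaces of $E_D$ for rank drop $2$. Adding the Higgs field compatibility and passing through the Hitchin map does not reduce this below $2$.
\item \textbf{Stratum $E$ not stable.} Here I would use the standard Harder--Narasimhan stratification estimate for Higgs pairs, in the style of Faltings and Nitsure, parabolic version. This shows that non-stable underlying bundles form a subset of codimension $\ge 2$ when $g-1\ge 1$ with the GP contribution, at least away from very small genus.
\end{itemize}

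Serre's criterion ($R_1+S_2$) then yields normality.

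The main obstacle is the codimension estimate for the locus where $E$ is unstable but the pair is stable, since the obstruction lemma gives no information there. My first attempt would be to bound this locus fibrewise over the Hitchin base. Properness of the Hitchin map (\cite{D}) allows codimension to be checked on a generic fibre together with the nilpotent cone dimension. Failing that, I would compute $\mathbb{H}^2$ directly via Serre duality: $\mathbb{H}^2_0$ is dual to trace-free endomorphisms commuting with $\phi$ and preserving $F$ in a twisted sense. I would then show that such endomorphisms exist only on a codimension $\ge 2$ locus.
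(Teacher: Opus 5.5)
\textbf{The failing step is your first bullet}, and its failure also undoes the trace-free bookkeeping behind your LCI argument.

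Normalize $A=\operatorname{diag}(I_{n-k},0_k)$ with $k\ge 1$, and take $E$ stable. The $\ker A\to\operatorname{coker}A$ block of $A\phi_{x^+}+\phi_{x^-}A$ vanishes identically. So compatibility imposes only $n^2-k^2$ conditions, and the compatible residues are
\[
\phi_{x^+}=\begin{pmatrix} M & 0\\ N & L\end{pmatrix},\qquad
\phi_{x^-}=\begin{pmatrix} -M & M'\\ 0 & L'\end{pmatrix},\qquad
L,L'\in M_k(\mathbb{C}),\quad \operatorname{tr}L+\operatorname{tr}L'=0 .
\]
Hence the compatible Higgs fields form a space of dimension $n^2(g-1)+k^2$ instead of $n^2(g-1)+1$. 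The Higgs condition gives back $k^2-1$ of the codimension $k^2$ of the rank locus. The stratum $\{\operatorname{rank}A=n-k\}$ therefore has dimension
\[
\bigl(n^2(g-2)+1\bigr)+(n^2-k^2)+\bigl(n^2(g-1)+k^2\bigr)=2n^2(g-1)+1
\]
for \emph{every} $k\ge 1$. That is codimension one, not $\ge 2$.

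Moreover, in the notation of Lemma~\ref{hypecomp}, the same block of $d^1(\psi,f)$ is $-(L'S+SL)$, with $S$ the corresponding block of $f$. For $k=1$ the trace condition forces $L'=-L$ and this block vanishes; that is the content of the lemma. For $k\ge 2$, the Sylvester map $S\mapsto L'S+SL$ is invertible as soon as $\operatorname{spec}L'\cap\operatorname{spec}(-L)=\emptyset$, so $\mathbb{H}^2=0$ at a generic point of the stratum.

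There is thus no surjective ``trace'' $\mathbb{H}^2\to\mathbb{C}$ at these points. At rank $n$ and $n-1$ the functional cutting out $\operatorname{im} d^1$ is $\operatorname{tr}(\operatorname{adj}(A)\,\cdot\,)$, and this vanishes identically once $\operatorname{rank}A\le n-2$. Consequently $\dim\mathbb{H}^1-\dim\mathbb{H}^2_0$ is not constant, and your Krull step collapses. Your appeal to Bhosle's construction for equidimensionality would not have supplied it in any case: being a closed subscheme says nothing about equidimensionality.

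The paper never makes this count. It imports irreducibility (Proposition~\ref{irr}) and the codimension-two bound (Corollary~\ref{codim3}) from the Gieseker--Higgs comparison. It gets LCI from the zero-locus description (Theorem~\ref{thm:LCI}). It then checks smoothness in only two places: on the rank-$n$ locus, which is identified with stable Higgs bundles on $X_0$ and so is smooth whatever the stability of $E$; and at the generic point of the rank-$(n-1)$ divisor, via Lemma~\ref{hypecomp}. That is also why it never meets your unresolved unstable-$E$ stratum: $R_1$ only needs the generic point of each divisor, not a bound on the whole locus $\{\mathbb{H}^2_0\neq 0\}$.

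Note, however, that your count is in tension with those imported inputs as well. On the rank-$n$ locus $\phi_{x^-}=-A\phi_{x^+}A^{-1}$, so $\chi_{\phi_{x^-}}(t)=\chi_{-\phi_{x^+}}(t)$ holds on its closure. On the rank-$(n-k)$ stratum this reads $\chi_{L'}=\chi_{-L}$, a condition of codimension $k-1$. For $k\ge 2$, the generic point of that stratum therefore lies outside the closure of the rank-$n$ locus. It is a smooth point ($\mathbb{H}^2=0$) of a component of dimension $2n^2(g-1)+1$. So no sharper estimate along your lines can close the gap; one would have to change the moduli problem, e.g.\ by imposing $\chi_{\phi_{x^-}}=\chi_{-\phi_{x^+}}$.
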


\begin{corollary}
The moduli space \(\mathcal{M}_{\mathrm{GPH}}\) carries a natural Poisson structure extending the logarithmic-symplectic form defined on its smooth locus (outside a codimension-two subset).
\end{corollary}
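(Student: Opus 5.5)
The plan is to produce the bracket on the smooth locus and then extend it using the normality in Theorem~\ref{NT}. The Jacobi identity is then inherited by density.

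\emph{Step 1: the bivector on the smooth locus.} Let $U\subset\mathcal{M}_{\mathrm{GPH}}$ be the smooth locus. Theorem~\ref{NT} says $\mathcal{M}_{\mathrm{GPH}}$ is normal and smooth in codimension one, so $\operatorname{codim}(\mathcal{M}_{\mathrm{GPH}}\setminus U)\geq 2$. At a point $(E_\bullet,\phi)$ of $U$ we have $T\,\mathcal{M}_{\mathrm{GPH}}=\mathbb{H}^1(\mathcal{C}_{(E_\bullet,\phi)})$ by Theorem~\ref{thm:def-GPH}.
\begin{itemize}
\item I would compare $\mathcal{C}_{(E_\bullet,\phi)}$ with its Grothendieck--Serre dual $R\mathcal{H}om(\mathcal{C},\omega_{X_0})[1]$ on the nodal curve. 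The trace pairing $\pi_*\End(E)\otimes\pi_*\End(E)\to\pi_*\mathcal{O}_{\widetilde X_0}$ is followed by $\pi_*\mathcal O_{\widetilde X_0}\otimes\omega_{X_0}\to\pi_*\omega_{\widetilde X_0}(D)$ and then by the residue at $D$.
\item This gives a chain map $\mathcal{C}^\vee\to\mathcal{C}$ (or in the other direction). It is an isomorphism away from the node and, at the node, degenerates exactly along the $\Hom(F,E_D/F)$ terms.
\item It induces a skew map $B\colon T^*U\to TU$, and the bivector is $\theta\in H^0(U,\Lambda^2T_U)$.
\end{itemize}
Equivalently, I would transport the Bottacin--Markman Poisson structure on $\omega_{\widetilde X_0}(D)$-twisted Higgs bundles on $\widetilde X_0$. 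Pulling $\phi$ back to $\widetilde X_0$ gives such a pair with residues compatible with $F$. The generalized parabolic data form a Poisson reduction of that symplectic leaf datum, which fits the term $\Hom(F,E_D/F)$ appearing twice in $\mathcal{C}$.

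\emph{Step 2: the log-symplectic form.} On the open set where $B$ is invertible, $\theta^{-1}$ is the natural symplectic form. It acquires logarithmic poles along the divisor where $\operatorname{rank}A$ drops to $n-1$; the computation of $\mathbb{H}^2$ for that case is used here. Skew-symmetry is formal from symmetry of the trace pairing.

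\emph{Step 3: the Jacobi identity on $U$.} I would check $[\theta,\theta]=0$ on $U$ in one of two ways:
\begin{itemize}
\item on the dense open set where $A$ is an isomorphism, either directly or by identification with the Bottacin--Markman structure, which is known to be Poisson;
\item via closedness of the log-symplectic form, by the standard Mukai-type argument that the form is a pullback of the canonical form on $T^*$ of the GPB moduli restricted to the stable locus.
\end{itemize}
Since $[\theta,\theta]$ is a section of a vector bundle on $U$ vanishing on a dense open set, it vanishes on $U$.

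\emph{Step 4: extension.} For local regular functions $f,g$ on an open $V\subset\mathcal{M}_{\mathrm{GPH}}$, set $\{f,g\}=\theta(df,dg)$. This is regular on $V\cap U$, whose complement has codimension $\geq 2$. Because $\mathcal{M}_{\mathrm{GPH}}$ is normal, algebraic Hartogs (Serre's $S_2$) extends $\{f,g\}$ uniquely to a regular function on $V$.
\begin{itemize}
\item Bilinearity, skew-symmetry and the Leibniz rule are identities between regular functions that hold on the dense set $V\cap U$. Since $\mathcal{O}$ is reduced (indeed integral), they hold everywhere.
\item The Jacobi identity holds on $V\cap U$ by Step 3, so it holds on $V$ by the same argument.
\end{itemize}
Equivalently, $\theta$ extends as a section of the reflexive sheaf $(\Omega^2)^{\vee}$.

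\emph{Main obstacle.} I expect the hard part to be Steps 1 and 3: making the duality chain map on the three-term complex explicit at the node. Concretely, one has to show that the two copies of $\Hom(F,E_D/F)$ pair correctly under the residue, so that $B$ is genuinely skew. I would first try the Bottacin--Markman comparison on the locus where $A$ is invertible. There the GPH is just a Higgs bundle on $X_0$ and the residue pairing is classical. Since that locus is dense, Step 4 then does the rest.
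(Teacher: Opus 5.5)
The gap is that the proposal never produces a bivector that is \emph{regular} across the divisor where $\operatorname{rank}A=n-1$. Your Step 4 matches the paper's final step, and is even more careful, since it works with functions or the reflexive sheaf $(\Omega^2)^\vee$ rather than treating $\wedge^2T$ as locally free. The problem is everything before it.

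The locus $\mathcal{M}^1_{\mathrm{GPH}}\setminus\mathcal{M}^0_{\mathrm{GPH}}$, where $\operatorname{rank}A=n-1$, has codimension one. The duality chain map for the three-term complex $\mathcal{C}_{(E_\bullet,\phi)}$ at rank-$(n-1)$ points, carried out in families, is exactly what you leave open as the ``main obstacle''. The fallback you propose does not work. You would construct $\theta$ classically on the locus where $A$ is invertible and then argue ``since that locus is dense, Step 4 then does the rest''. That confuses density with having complement of codimension $\ge 2$. Hartogs/$S_2$ only extends across codimension $\ge 2$. A bivector regular on $\mathcal{M}^0_{\mathrm{GPH}}$ is a priori only a rational section near the rank-$(n-1)$ divisor, and it may have poles there. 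Your Step 3 density argument for $[\theta,\theta]=0$ likewise presupposes that $\theta$ is already regular on all of $U$. Step 2 does not fill this in either: $\dim\mathbb{H}^2=1$ (Lemma~\ref{hypecomp}) gives smoothness at those points, not the order of pole of the symplectic form along the divisor.

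The paper crosses this divisor by an identification rather than a computation. Normality (Theorem~\ref{NT}) plus Zariski's Main Theorem makes the proper bijective map $h_\bullet\colon\widetilde{\mathcal{M}}^1_{\mathrm{GHB}}\to\mathcal{M}^1_{\mathrm{GPH}}$ an isomorphism. The log-symplectic form of \cite{DI} on the smooth variety $\widetilde{\mathcal{M}}^1_{\mathrm{GHB}}$ has only logarithmic poles along the boundary divisor, so its inverse is a regular Poisson bivector there, and it is transported over. This also shows that $\mathcal{M}^1_{\mathrm{GPH}}$ is smooth and supplies the form the statement says is being extended. Only then is Hartogs applied, across $\mathcal{M}_{\mathrm{GPH}}\setminus\mathcal{M}^1_{\mathrm{GPH}}$, which has codimension $\ge 2$ by Corollary~\ref{codim3}.

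To repair your route you need one of the following:
\begin{enumerate}
\item this identification;
\item an explicit local computation at a rank-$(n-1)$ point showing that the symplectic form on $\mathcal{M}^0_{\mathrm{GPH}}$ has at worst a logarithmic pole along the divisor, equivalently that its inverse extends regularly;
\item an actual construction of the relative duality map for $\mathcal{C}$ over all of $U$, together with a check that it agrees with the given log-symplectic form.
\end{enumerate}
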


\textbf{Open Questions.} 
This work builds the basic ideas and structures for a detailed study of the Poisson geometry of the moduli space $\mathcal{M}_{\mathrm{GPH}}$. In particular, it would be interesting to investigate the symplectic leaves and the degeneracy locus of the Poisson structure. 

Furthermore, a systematic analysis of the singularities of $\mathcal{M}_{\mathrm{GPH}}$ remains an open question. Building on the ideas in Seshadri's ICTP notes~\cite{ICTP}, Kausz's work \cite{Ka} and the work of Balaji--Barik--Nagaraj~\cite{BBN}, we expect that the moduli space of generalized parabolic Hitchin pairs will serve as a powerful tool for computing cohomological invariants and understanding the geometry of Gieseker-type degenerations of the moduli space of Higgs bundles on smooth curves (see \cite{KiemLi2007}). 

\subsection{Notation and Conventions}

We collect the main notation concerning the moduli objects appearing in this paper.

\begin{tcolorbox}[colback=white, sharp corners]
\centering
\small
\begin{tabular}{l|l|l|l}
\textbf{Object} & \textbf{Functor} & \textbf{Stack} & \textbf{Coarse space} \\
\hline
Gen.\ parabolic Hitchin pairs & $\mathrm{GPH}$ & $\mathrm{M}_{\mathrm{GPH}}$ & $\mathcal{M}_{\mathrm{GPH}}$ \\
Gieseker--Higgs bundles & $\mathrm{GHB}$ & $\mathrm{M}_{\mathrm{GHB}}$ & $\mathcal{M}_{\mathrm{GHB}}$ \\
Torsion-free sheaves & $\mathrm{TF}$ & $\mathrm{M}_{\mathrm{TF}}$ & $\mathcal{M}_{\mathrm{TF}}$ \\
Torsion-free Higgs sheaves & $\mathrm{TFH}$ & $\mathrm{M}_{\mathrm{TFH}}$ & $\mathcal{M}_{\mathrm{TFH}}$ \\
Gen.\ parabolic bundles & $\mathrm{GPB}$ & $\mathrm{M}_{\mathrm{GPB}}$ & $\mathcal{M}_{\mathrm{GPB}}$ \\
Hitchin pairs (normalization) & $\mathrm{HP}$ & $\mathrm{M}_{\mathrm{HP}}$ & $\mathcal{M}_{\mathrm{HP}}$ \\
Vector bundles (normalization) & $\mathrm{Bun}$ & $\mathrm{M}_{\mathrm{Bun}}$ & $\mathcal{M}_{\mathrm{Bun}}$ \\
Gieseker vector bundles & $\mathrm{GVB}$ & $\mathrm{M}_{\mathrm{GVB}}$ & $\mathcal{M}_{\mathrm{GVB}}$ \\
\end{tabular}

\medskip
Level-one versions are denoted with a superscript $1$ (e.g.\ $\mathrm{GHB}^1$, $\mathrm{M}^1_{\mathrm{GHB}}$, $\mathcal{M}^1_{\mathrm{GHB}}$).
\end{tcolorbox}

\begin{ack}
The author thanks the University of Haifa for its support and hospitality, where part of this work was carried out while he was a postdoctoral fellow. He also thanks Grok AI for suggesting the use of the spectral sequence technique employed in \cref{sec:computations}, and for its assistance in improving the exposition of this paper.
\end{ack}

\section{\textbf{The first order infinitesimal deformations of GPHs}}

Let \(X_0\) be an irreducible projective nodal curve of genus \(g \geq 2\) with exactly one node \(p\). Let \(\pi \colon \widetilde{X}_0 \to X_0\) be its normalization. Then \(\widetilde{X}_0\) is a smooth projective curve of genus \(g-1\), and \(\pi^{-1}(p) = \{x^+, x^-\}\) consists of two distinct points. We will sometimes denote the divisor by $D:=\{x^+, x^-\}$.

\begin{definition}\cite[Definition 2.1.]{Bh2}
A \emph{generalized parabolic Hitchin pair} (GPH) on \(X_0\) is a triple \((E, \phi, F(E))\), where
\begin{enumerate}
\item \(E\) is a vector bundle of rank \(n\) and degree \(d\) on the normalization \(\widetilde{X}_0\),
\item \(\phi \colon E \to E \otimes \pi^*\omega_{X_0}\) is a Higgs field (i.e., an \(\mathcal{O}_{\widetilde{X}_0}\)-linear map), and $\omega_{X_0}$ is the dualising sheaf of the nodal curve $X_0$,
\item \(F(E) \subset E_{x^+} \oplus E_{x^-}\) is a linear subspace of dimension \(n\) (a generalized parabolic structure),
\end{enumerate}
such that \(\phi\) preserves the generalized parabolic structure, i.e.,
\[
(\pi_*\phi)(F(E)) \subset F(E) \otimes \omega_{X_0}.
\]
We denote such a GPH by \((E_{\bullet}, \phi) = (E, \phi, F(E))\).
\end{definition}

\begin{notation}
For a vector bundle $E$ on $\widetilde X_0$ we write
\[
E_D \ \colonequals\ E_{x^+}\oplus E_{x^-}
\]
for the direct sum of its fibres over the two preimages of the node. More generally, for a subbundle $E'\subseteq E$ we write $E'_D \colonequals E'_{x^+}\oplus E'_{x^-} \subseteq E_D$.
\end{notation}

\begin{remark}[The dualizing sheaf of $X_0$]\label{rem:pullback-dualizing}
Recall that for a nodal curve there is an explicit formula for the dualizing sheaf in terms of the normalization. In our setting, $X_0$ has a single node $p$, its normalization is $\pi\colon \widetilde X_0\to X_0$, and $\pi^{-1}(p) = \{x^+,x^-\}$. The \emph{dualizing sheaf $\omega_{X_0}$} is the kernel
\[
\omega_{X_0} \ = \ \ker\left[ \pi_*\Omega^1_{\widetilde X_0}(x^++x^-) \longrightarrow \mathbb C_p \right],
\]
where $\mathbb C_p$ denotes the skyscraper sheaf at $p$, and the map $\pi_*\Omega^1_{\widetilde X_0}(x^++x^-)\to \mathbb C_p$ is given by
\[
s \ \longmapsto \ \mathsf{Res}(s;x^+) + \mathsf{Res}(s;x^-),
\]
with $\mathsf{Res}(s;x)$ the residue of a form $s$ at a point $x$.

Pulling back along $\pi$ (the residue condition only constrains how the two branches at $p$ are glued together as a sheaf \emph{on $X_0$}; on the normalization the two branches are separated, so the constraint disappears) gives
\[
\pi^*\omega_{X_0} \ \cong \ \Omega^1_{\widetilde X_0}(x^++x^-).
\]
In particular, a Higgs field $\phi\colon E\to E\otimes\pi^*\omega_{X_0}$ as in the definition of a GPH is the same data as a map $\phi\colon E\to E\otimes\Omega^1_{\widetilde X_0}(x^++x^-)$, i.e.\ a Higgs field on $\widetilde X_0$ with at worst simple poles at $x^+,x^-$. This is what justifies speaking of the \emph{residues} $\phi_{x^+},\phi_{x^-}\in\End(E_{x^+}),\End(E_{x^-})$ throughout the paper (starting with $d^1$ in \cref{thm:def-GPH}), and identifies the hypothesis of \cref{TripDef} with the general definition rather than a separate setup.
\end{remark}

A generalized parabolic Hitchin pair $(E, F(E), \phi)$ induces a torsion-free sheaf \(\mathcal{F}\) on the nodal curve \(X_0\) as the kernel of the natural surjective map
\[
\mathcal{F} \ :=\ \ker\left( \pi_* E \ \longrightarrow\ \frac{E_{x^+} \oplus E_{x^-}}{F(E)} \right),
\]
and Higgs field \(\phi_{\mathcal{F}}: \mathcal F\to \mathcal F\otimes \omega_{X_0}\) because of the condition $(\pi_*\phi)(F(E)) \subseteq F(E) \otimes \omega_{X_0}$.

There is a natural morphism of moduli spaces (\cite[Theorem 2.9 ]{Bh2})
\[
\phi \colon \mathcal{M}_{\mathrm{GPH}} \longrightarrow \mathcal{M}_{\mathrm{TFH}},
\]
from the moduli space of semistable generalized parabolic Hitchin pairs to the moduli space of semistable torsion-free Higgs sheaves of rank \(n\) and degree \(d\) on \(X_0\). 

\

Let $\mathbb C[\epsilon]$ be the ring of dual numbers, i.e., $\epsilon^2=0$. We denote $\mathbb D:=Spec~~\mathbb C[\epsilon]$. Consider the nodal curve $X_0\times \mathbb D$ with the nodal divisor $p\times \mathbb D$. We the following map $\pi_{\mathbb D}: \widetilde{X_0}\times \mathbb D\rightarrow X_0\times \mathbb D$. Then $\pi_{\mathbb D}^{-1}({p\times \mathbb D})=(x^+\times \mathbb D) ~\coprod~ (x^-\times \mathbb D)$.

\

\begin{definition}
Let \((E, \phi, F(E))\) be a generalized parabolic Hitchin pair of rank $n$ on the nodal curve \(X_0\). 
A \textbf{first-order infinitesimal deformation} of \((E, \phi, F(E))\) consists of a triple 
\((E_{\mathbb{D}}, \phi_{\mathbb{D}}, F(E_{\mathbb{D}}))\) over \(X_0 \times \operatorname{Spec} \mathbb{C}[\epsilon]\) 
(with \(\epsilon^2 = 0\)), where:
\begin{enumerate}
\item \((E_{\mathbb{D}}, \phi_{\mathbb{D}})\) is a first-order infinitesimal deformation of the underlying Hitchin pair \((E, \phi)\);

\item \(F(E_{\mathbb{D}}) \subset E_{\mathbb{D},x^+} \oplus E_{\mathbb{D},x^-}\) is a \(\mathbb{C}[\epsilon]\)-submodule such that
  \begin{enumerate}
  \item the quotient \(\dfrac{E_{\mathbb{D},x^+} \oplus E_{\mathbb{D},x^-}}{F(E_{\mathbb{D}})}\) is a locally free \(\mathbb{C}[\epsilon]\)-module of rank \(n\);
  \item \(F(E_{\mathbb{D}}) \otimes_{\mathbb{C}[\epsilon]} \mathbb{C} \ \cong\ F(E)\) (i.e., it reduces to the original subspace modulo \(\epsilon\));
  \end{enumerate}

\item the deformed Higgs field preserves the deformed generalized parabolic structure:
  \[
  (\pi_{\mathbb{D}})_* \phi_{\mathbb{D}} \bigl( F(E_{\mathbb{D}}) \bigr) 
  \ \subseteq\ 
  F(E_{\mathbb{D}}) \otimes \omega_{X_0 \times \mathbb D/ \mathbb D}.
  \]
\end{enumerate}
We denote such a deformation by \((E_{\mathbb{D}}, \phi_{\mathbb{D}}, F(E_{\mathbb{D}}))\).
\end{definition}
\

\begin{definition}
Two such infinitesimal deformations $(E_{\mathbb D}, \phi_{\mathbb D}, F(E_{\mathbb D}))$ and $(E'_{\mathbb D}, \phi'_{\mathbb D}, F(E'_{\mathbb D}))$ are said to be \textbf{isomorphic} if there exists an isomorphism of vector bundles $\psi_{\mathbb D}\colon E_{\mathbb D}\to E'_{\mathbb D}$ such that
\begin{enumerate}
\item $\psi_{\mathbb D}|_{\operatorname{Spec}\mathbb C} = \mathrm{Identity}$;
\item $\psi_{\mathbb D}$ intertwines the Higgs fields:
\[
{\pi_{\mathbb D}}_*(\psi_{\mathbb D}\times \mathbb I_{\mathbb D})\circ {\pi_{\mathbb D}}_*(\phi_{\mathbb D}) \ = \ {\pi_{\mathbb D}}_*(\phi'_{\mathbb D})\circ {\pi_{\mathbb D}}_*(\psi_{\mathbb D});
\]
\item $\psi_{\mathbb D}$ carries the flag onto the flag: writing $\psi_{\mathbb D,D}\colon E_{\mathbb D,D}\to E'_{\mathbb D,D}$ for the isomorphism induced by $\psi_{\mathbb D}$ on the fibres over the node,
\[
\psi_{\mathbb D,D}\bigl(F(E_{\mathbb D})\bigr) \ = \ F(E'_{\mathbb D}).
\]
\end{enumerate}
\end{definition}

\

\begin{theorem}\label{thm:def-GPH}
The isomorphism classes of first-order infinitesimal deformations of a GPH \((E_{\bullet},\phi):=(E,\phi, F(E))\) 
are canonically parametrized by the first hypercohomology \(\mathbb{H}^1(\mathcal{C}_{(E_{\bullet}, \phi)})\), 
where \(\mathcal{C}_{(E_{\bullet}, \phi)}\) is the following complex of sheaves on the nodal curve \(X_0\):
\[
\mathcal{C}_{(E_{\bullet}, \phi)} \colon \ 
\pi_*\End(E) \ \xrightarrow{d^0}\ 
\pi_*\End(E)\otimes\omega_{X_0} \oplus \Hom(F,E_D/F) 
\ \xrightarrow{d^1}\ 
\Hom(F,E_D/F).
\]
The differentials are given explicitly as follows:
\begin{enumerate}
\item The first differential \(d^0\) is
\[
d^0(\psi) = \bigl( [\psi, \phi],\ \overline{\psi \circ F} \bigr),
\]
where \([\psi, \phi] = \psi\phi - \phi\psi\) is the commutator, and \(\overline{\psi \circ F}\) denotes the composition \(\psi \circ F\) followed by the projection onto the quotient \(E_D/F(E)\).
\item The second differential \(d^1\) is
\[
d^1(s, G) = \overline{s \circ F} \;+\; \overline{(\pi_*\phi)_p\circ \tilde G} \;-\; G\circ\Bigl((\pi_*\phi)_p\big|_{F(E)}\Bigr),
\]
where \(s \in \pi_*\End(E)\otimes \omega_{X_0}\), \(G \in \Hom(F(E), E_D/F(E))\), the map \((\pi_*\phi)_p\big|_{F(E)}\colon F(E)\to F(E)\) is the restriction of \((\pi_*\phi)_p\) to \(F(E)\) (well defined since the Higgs field \(\phi\) is compatible with the flag \(F(E)\)), and \(\overline{(\pi_*\phi)_p\circ \tilde G}\) is the composition
\[
F(E)\xrightarrow{\tilde{G}} E_{x^+}\oplus E_{x^-}\xrightarrow{\;\mathrm{res}\,\phi_{x^+}\,\oplus\,\mathrm{res}\,\phi_{x^-}\;} E_{x^+}\oplus E_{x^-}\to \frac{E_{x^+}\oplus E_{x^-}}{F(E)},
\]
where \(\tilde{G}\colon F(E)\to E_{x^+}\oplus E_{x^-}\) is any choice of lift of \(G\); two choices of lift give the same map because \(\phi\) is compatible with the flag \(F(E)\). The last term \(G\circ\bigl((\pi_*\phi)_p|_{F(E)}\bigr)\) requires no lift, since \((\pi_*\phi)_p|_{F(E)}\) already lands in \(F(E)\), the domain on which \(G\) is defined.
\end{enumerate}
\end{theorem}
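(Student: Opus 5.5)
We will compute everything with a Čech description of hypercohomology. Fix an affine open cover $\mathcal U=\{U_i\}$ of $X_0$ such that $p$ lies in exactly one member, say $U_0$. Then $\pi^{-1}(U_i)$ is an affine cover of $\widetilde X_0$, and on each piece $E_{\mathbb D}$ is trivial as a deformation, so $E_{\mathbb D}|_{\pi^{-1}U_i}\cong E|_{\pi^{-1}U_i}\otimes\mathbb C[\epsilon]$. The skyscraper terms $\Hom(F,E_D/F)$ are supported at $p$ and have no higher Čech cohomology. A class in $\mathbb H^1(\mathcal C_{(E_\bullet,\phi)})$ is therefore represented by a triple $(\{\psi_{ij}\},\{s_i\},G)$, where $\psi_{ij}\in\Gamma(U_{ij},\pi_*\End(E))$, $s_i\in\Gamma(U_i,\pi_*\End(E)\otimes\omega_{X_0})$ and $G\in\Hom(F,E_D/F)$. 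The cocycle conditions are
\[
\delta\psi=0,\qquad s_j-s_i=[\psi_{ij},\phi],\qquad d^1(s_0,G)=0 .
\]
The condition $\overline{\psi_{ij}\circ F}=0$ is empty, because $p\notin U_{ij}$. Coboundaries are $d^0$ and $\delta$ of $0$-cochains $\{\psi_i\}$.

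The construction map goes from deformations to cocycles.
\begin{enumerate}
\item Choose trivializations. The transition functions of $E_{\mathbb D}$ then have the form $1+\epsilon\psi_{ij}$, and $\phi_{\mathbb D}|_{U_i}=\phi+\epsilon s_i$. Compatibility of $\phi_{\mathbb D}$ with the gluing gives exactly $s_j-s_i=[\psi_{ij},\phi]$, as in Nitsure/Biswas–Ramanan.
\item Use the chosen trivialization over $U_0$ to identify $E_{\mathbb D,D}$ with $E_D\otimes\mathbb C[\epsilon]$. A flat submodule $F(E_{\mathbb D})$ lifting $F$ is the same as a graph $\{f+\epsilon\tilde G(f)\}$ with $\tilde G\colon F\to E_D$. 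It is well defined up to $\Hom(F,F)$, so it determines $G\in\Hom(F,E_D/F)$; this is the standard tangent space of the Grassmannian.
\item Expand the invariance condition $(\mathrm{res}\,\phi+\epsilon\,\mathrm{res}\,s_0)(f+\epsilon\tilde G f)\in F(E_{\mathbb D})$ to first order in $\epsilon$. Using Remark~\ref{rem:pullback-dualizing} to make sense of residues, this says
\[
\mathrm{res}(s_0)f+\mathrm{res}(\phi)\tilde Gf-\tilde G(\mathrm{res}\,\phi\, f)\in F .
\]
This is precisely $d^1(s_0,G)=0$. Independence of the lift $\tilde G$ follows from $\phi(F)\subset F$, as noted in the statement.
\end{enumerate}

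Next we check that the class is well defined and that isomorphism classes correspond to classes. Changing the trivializations by $1+\epsilon\psi_i$ changes the triple by the total differential of $\{\psi_i\}$:
\begin{itemize}
\item[] $\psi_{ij}\mapsto\psi_{ij}+\psi_j-\psi_i$;
\item[] $s_i\mapsto s_i+[\psi_i,\phi]$;
\item[] $G\mapsto G+\overline{\psi_0\circ F}$, since the graph moves by $\psi_0$ at the fibres over $x^\pm$.
\end{itemize}
This is exactly the $d^0$ component together with the Čech differential, with signs fixed by convention. An isomorphism $\psi_{\mathbb D}$ of deformations is locally $1+\epsilon\psi_i$, and conditions (2) and (3) in the definition of isomorphism translate into the same coboundary relations. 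So the construction descends to an injective map from isomorphism classes to $\mathbb H^1$.

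For surjectivity, start from any cocycle. Glue $E|_{\pi^{-1}U_i}\otimes\mathbb C[\epsilon]$ via $1+\epsilon\psi_{ij}$, set $\phi_{\mathbb D}=\phi+\epsilon s_i$, and take $F(E_{\mathbb D})$ to be the graph of a lift of $G$. Reversing the three steps above shows that this is a GPH deformation. The quotient is free of rank $n$ because the graph of $\tilde G$ is a flat lift of $F$.

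We expect the main obstacle to be the third step, and with it the bookkeeping at the node. Three things must be done carefully there. First, one has to verify that "preserving $F\otimes\omega$" is correctly encoded by residues at $x^\pm$, and that the residue formula is compatible with pushing forward by $\pi$. Second, one has to check that the sign conventions make $d^1\circ d^0=0$. Concretely, we must show
\[
\overline{[\psi,\phi]F}+\overline{\phi\,\psi F}-\overline{\psi\,\phi F}=0
\]
modulo $F$; this holds because $\phi F\subset F$. Third, one has to keep track of the fact that $\Hom(F,E_D/F)$ appears in both degrees 1 and 2 of the complex. The first copy records the tangent direction of $F$, and the second receives the linearized invariance condition. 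Once these identifications are made, the Čech computation is formal. We will also remark that for $U_0$ not containing the node, everything reduces to the smooth-curve case of Nitsure.
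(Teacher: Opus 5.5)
Your proposal is correct and follows essentially the same route as the paper. Both describe $\mathbb{H}^1$ through the Čech total complex: the Hitchin cocycle condition, the flag-invariance condition $d^1(s_0,G)=0$ obtained by expanding $(\phi+\epsilon s)(F+\epsilon\tilde G)$ to first order, the fact that changes of trivialization and isomorphisms $1+\epsilon\psi_i$ give exactly the total coboundary, and gluing for the converse. The only difference is cosmetic: you choose the cover so that the node lies in a single open set $U_0$. This makes the paper's ``parabolic transition condition'' $\overline{B_{ij}\circ F}+G_i-G_j=0$ vacuous and reduces the flag data to one element $G\in\Hom(F,E_D/F)$, which slightly simplifies the bookkeeping.
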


\begin{remark}[Non-surjectivity of the flag component of $d^0$]\label{rem:flag-not-surjective}
It is natural to ask whether the deformation theory of \cref{thm:def-GPH} could be simplified by working with a single ``kernel sheaf'' $\ker(d^0)\subset \pi_*\End(E)$ in place of the full complex $\mathcal{C}_{(E_{\bullet},\phi)}$. In the literature this sheaf $\ker(d^0)$ is often denoted as $\mathcal Par\End(E)$. This shortcut is only legitimate when the short exact sequence
\[
0 \to \ker(d^0) \to \mathcal{C}^0 \to \mathcal{C}^1 \to 0
\]
holds, i.e.\ when $d^0$ is surjective as a map of sheaves. We show below that this fails already for the flag component
\[
\pi_*\End(E) \ \longrightarrow\ \Hom(F,E_D/F), \qquad \psi \longmapsto \overline{\psi \circ F},
\]
Since $\Hom(F,E_D/F)$ is a skyscraper sheaf at the node, this map is determined entirely by the fibre map
\[
L \colon \End(E_{x^+}) \oplus \End(E_{x^-}) \longrightarrow \Hom(F,E_D/F), \qquad
L(\psi_+,\psi_-) = \overline{(\psi_+ \oplus \psi_-)\circ F}.
\]
Consider the simplest case when the flag is given as the graph of a linear map $A: E_{x^+}\to E_{x^-}$ i.e., $F = \Gamma_A = \{(v,Av) : v \in E_{x^+}\}$ and identifying $F \cong E_{x^+}$, $E_D/F \cong E_{x^-}$ via $(x,y) \mapsto y - Ax$, one computes
\[
L(\psi_+,\psi_-) \ = \ \psi_- A - A \psi_+ \ \in \ \Hom(E_{x^+},E_{x^-}).
\]

\emph{Explicit counterexample.} Take $n=2$, $E_{x^+} = E_{x^-} = \mathbb{C}^2$ with basis $e_1,e_2$, and
\[
A = \begin{pmatrix} 1 & 0 \\ 0 & 0 \end{pmatrix}, \qquad \rank A = 1.
\]
For any $\psi_+,\psi_- \in M_2(\mathbb{C})$ the $(2,2)$-entry of $\psi_- A - A \psi_+$ vanishes identically. Hence
\[
E_{22} = \begin{pmatrix} 0 & 0 \\ 0 & 1 \end{pmatrix} \ \notin \ \operatorname{im}(L),
\]
so $L$, and therefore $\pi_*\End(E) \to \Hom(F,E_D/F)$, is \emph{not} surjective; its cokernel is $1$-dimensional, spanned by $E_{22}$. 

Consequently $d^0$ is not surjective in general. This is why the deformation and obstruction theory of a GPH must be computed via the hypercohomology of the full three-term complex $\mathcal{C}_{(E_{\bullet},\phi)}$, rather than via the cohomology of any single subsheaf like $\mathcal Par\End(E)$.
\end{remark}

\begin{remark}
Before discussing the proof let us verify that \(d^1 \circ d^0 = 0\). Let \(\psi \in C^0(\pi_*\End(E))\). Set
\[
s = [\psi, \phi] = \psi\phi - \phi\psi, \qquad G = \overline{\psi \circ F}.
\]
Since \(\phi\) is compatible with the flag \(F(E)\), the map \(\phi \circ F\) factors through \(F\):
\[
\phi \circ F = F \circ \Bigl((\pi_*\phi)_p\big|_{F(E)}\Bigr).
\]
Then
\[
s \circ F = \psi\circ(\phi \circ F) - \phi\circ(\psi \circ F) = (\psi \circ F)\circ\Bigl((\pi_*\phi)_p\big|_{F(E)}\Bigr) - \phi\circ(\psi \circ F),
\]
so, projecting to \(E_D/F(E)\),
\[
\overline{s \circ F} = \overline{\psi \circ F}\circ\Bigl((\pi_*\phi)_p\big|_{F(E)}\Bigr) - \overline{\phi\circ(\psi \circ F)} = G\circ\Bigl((\pi_*\phi)_p\big|_{F(E)}\Bigr) - \overline{(\pi_*\phi)_p\circ \tilde G},
\]
where the last equality uses the fact that \(\psi \circ F\) is itself a lift of \(G = \overline{\psi \circ F}\), so that \(\overline{\phi\circ(\psi\circ F)} = \overline{(\pi_*\phi)_p\circ \tilde G}\) by definition of the latter.

Therefore
\[
d^1(s,G) = \overline{s \circ F} + \overline{(\pi_*\phi)_p\circ \tilde G} - G\circ\Bigl((\pi_*\phi)_p\big|_{F(E)}\Bigr) = 0,
\]
since the two occurrences of \(G\circ\bigl((\pi_*\phi)_p|_{F(E)}\bigr)\) and \(\overline{(\pi_*\phi)_p\circ \tilde G}\) cancel.
\end{remark}

\begin{proof}
Let \(((E_{\mathbb{D}}, \phi_{\mathbb{D}}), F(E_{\mathbb{D}}))\) be a first-order infinitesimal deformation of \((E, \phi, F(E))\) over \(\operatorname{Spec} \mathbb{C}[\epsilon]/(\epsilon^2)\).

Choose an open affine cover \(\{U_i\}_{i\in I}\) of \(X_0\) such that the preimages \(V_i = \pi^{-1}(U_i)\) trivialize \(E\). Over each \(V_i \times \operatorname{Spec}\mathbb{C}[\epsilon]\) we have
\[
E_{\mathbb{D}}|_{V_i} \ \simeq\ E|_{V_i} \otimes_{\mathbb{C}} \mathbb{C}[\epsilon],
\]
with transition functions
\[
g_{ij} = I + \epsilon \cdot B_{ij}, \qquad B_{ij} \in \Gamma(V_i \cap V_j, \End(E)).
\]

The deformed Higgs field is given locally by
\[
\phi_{\mathbb{D}}|_{V_i} = \phi + \epsilon \cdot s_i, \qquad s_i \in \Gamma(V_i, \End(E) \otimes \Omega^1_{\widetilde{X_0}}(x^++x^-)).
\]

The deformed flag is given locally by
\[
F(E_{\mathbb{D}})|_{V_i} \ = \ \bigl\{ F(x) + \epsilon \cdot \tilde G_i(x) \ \big|\ x \in \mathbb C^n \bigr\}, \footnote{Here $n$ is the rank of $E$, and $F(E)$ is seen as a linear map $F: \mathbb C^n\to E_D$}
\]
where \(\tilde G_i \in \Hom(F(E), E_{D})|_{U_i}\) \footnote{Notice that $\Hom(F(E), E_{D})$ is not in general a sheaf on $\widetilde{X_0}$ but always a sheaf on $X_0$.}.

\smallskip

The data \((B_{ij}, s_i, \tilde G_i)\) must satisfy three compatibility conditions, which we now derive:

\begin{itemize}
\item \textbf{Hitchin cocycle condition} (from deformation of the underlying bundle and the Higgs field): \\
 The deformed Higgs field \(\phi_{\mathbb{D}}\) must be compatible with the deformed transition functions \(g_{ij}\). Expanding to first order gives
  \begin{equation}
  s_j - s_i = [B_{ij}, \phi] \quad \text{on } V_i \cap V_j.
  \end{equation}

\item \textbf{Parabolic transition condition} (from deformation of the flag): \\
 The deformed flags on the overlapping charts must agree after applying the deformed transition functions. Expanding
  \[
  (I + \epsilon B_{ij}) \cdot (F(x) + \epsilon \tilde G_i(x)) = F(x) + \epsilon (B_{ij} \circ F(x) + \tilde G_i(x))
  \]
  and comparing with the expression on \(V_j\) yields
  \begin{equation}
  \overline{B_{ij} \circ F} + G_i - G_j = 0 \quad \text{in } \Hom(F(E), E_D/F(E))|_{U_i\cap U_j}. \footnote{Here $G_i=\tilde G_i$ mod $Image~F(E)$ and $G_j=\tilde G_j$ mod $Image~F(E)$ and $\overline{B_{ij} \circ F}:={B_{ij} \circ F}$ mod $Image~F(E)$.}
  \end{equation}

\item \textbf{Higgs--flag compatibility condition}:\\
Let \(F \colon F \to E_D\) denote the inclusion of the flag (as elsewhere, we use \(F\) for both the flag and the underlying vector space). Since \(F\) is \(\phi\)-invariant, \(\phi \circ F\) factors through \(F\): there is a (unique) map \(\phi \colon F \to F\) with
\[
\phi \circ F \;=\; F \circ \phi,
\]
i.e.\ the square
\[
\begin{array}{ccc}
F & \xrightarrow{\ \ Id\ \ } & E_D \\[4pt]
{\scriptstyle \phi}\big\downarrow & & \big\downarrow{\scriptstyle \phi} \\[4pt]
F & \xrightarrow{\ \ Id\ \ } & E_D
\end{array}
\]
commutes (we write \(\phi\) for both the restriction and the ambient operator, the domain making clear which is meant).

The deformed object \((E_{\mathbb{D}}, \phi_{\mathbb{D}}, F(E_{\mathbb{D}}))\) must still be a generalized parabolic Higgs bundle to first order. Let \(\tilde G_i \colon F \to E_D\) be a map, and set
\[
F(E_{\mathbb{D}}) := \operatorname{Im}\bigl(F+\epsilon \tilde G_i\bigr) \subset E_{D,\mathbb D}, \qquad \phi_{\mathbb{D}} := \phi + \epsilon\, s_i,
\]
where \(s_i \in \Gamma(V_i, \End(E) \otimes \Omega^1_{\widetilde{X_0}}(x^++x^-))\), so that \(F+\epsilon \tilde G_i \colon F \to E_{D,\mathbb{D}}\) is the deformed inclusion. Since \(F(E_{\mathbb D})\) is defined as an image, it is unchanged if \(\tilde G_i\) is altered by a map landing in \(\operatorname{Im}(F)\); only its composite with the quotient \(E_D \to E_D/\operatorname{Im}(F)\) matters, and it is in this sense that the compatibility condition below should be read.

The condition \(\phi_{\mathbb{D}}\bigl(F(E_{\mathbb{D}})\bigr) \subseteq F(E_{\mathbb{D}})\), expanded to first order, says precisely that
\[
(\phi+\epsilon s_i)\circ(F+\epsilon \tilde G_i) \;=\; (F+\epsilon \tilde G_i)\circ\phi \pmod{\epsilon^2},
\]
i.e.\ that the square
\[
\begin{array}{ccc}
F & \xrightarrow{\ \ F+\epsilon \tilde G_i\ \ } & E_{D,\mathbb D} \\[4pt]
{\scriptstyle \phi}\big\downarrow & & \big\downarrow{\scriptstyle \phi+\epsilon s_i} \\[4pt]
F & \xrightarrow{\ \ F+\epsilon \tilde G_i\ \ } & E_{D,\mathbb D}
\end{array}
\]
commutes.

Expanding both sides for \(v \in F\) modulo \(\epsilon^2\), and using \(\phi\circ F = F\circ\phi\):
\[
(\phi+\epsilon s_i)\bigl(F(v)+\epsilon \tilde G_i(v)\bigr) = F(\phi(v)) + \epsilon\bigl[\phi\circ \tilde G_i(v) + s_i\circ F(v)\bigr],
\]
\[
\bigl(F+\epsilon \tilde G_i\bigr)\bigl(\phi(v)\bigr) = F(\phi(v)) + \epsilon\, \tilde G_i\bigl(\phi(v)\bigr).
\]
The $0$-th $\epsilon$-order terms agree automatically. Equating the $1$-th $\epsilon$ order terms modulo \(\operatorname{Im}(F)\) gives, for all \(v \in F\),
\[
\overline{\phi\circ \tilde G_i}(v) + \overline{s_i\circ F}(v) = \overline{(\tilde G_i\circ \phi)}(v),
\]
that is,
\begin{equation}
\overline{s_i\circ F} \;+\; \overline{\phi\circ \tilde G_i} \;-\; \overline{\tilde G_i\circ \phi} \;=\; 0.
\end{equation}

\end{itemize}

These three conditions are precisely the defining cocycle relations for the complex \(\mathcal{C}_{(E_{\bullet}, \phi)}\). For the reader's convenience we write here the details of the total complex of the Čech double complex:

The total complex of the Čech double complex for a GPH is
\[
0 \longrightarrow \Tot^0 \xrightarrow{d^0_{\Tot}}
\Tot^1 \xrightarrow{d^1_{\Tot}}
\Tot^2  \longrightarrow 0,
\]
where
\[
\begin{aligned}
\Tot^0 &= C^0(\pi_*\End(E)), \\
\Tot^1 &= C^1(\pi_*\End(E)) \ \oplus\ C^0(\pi_*(\End(E)\otimes\omega_{X_0})) 
\ \oplus\ C^0(\Hom(F,E_D/F)), \\
\Tot^2 &= C^2(\pi_*\End(E)) \ \oplus C^1(\pi_*(\End(E)\otimes\omega_{X_0})) \ \oplus\ C^1(\Hom(F,E_D/F)) \\
&\qquad 
\ \oplus\ C^0(\Hom(F,E_D/F)).
\end{aligned}
\]
The differentials are defined explicitly as follows:
\begin{align*}
d^0_{\Tot}(\{\psi_i\})
&= \bigl( \psi_j - \psi_i,\ 
[\psi_i, \phi],\ 
\overline{\psi_i \circ F}:=\psi_i \circ F \pmod{im F} \bigr), \\[6pt]
d^1_{\Tot}(B_{ij},\ s_i,\ G_i)
&= \Bigl(
\underbrace{B_{jk} - B_{ik} + B_{ij}}_{\text{Čech on bundle}}, \ 
\underbrace{s_j - s_i - [B_{ij}, \phi]}_{\text{Čech + commutator on Higgs}}, \\
&\qquad 
\underbrace{G_j - G_i - \overline{B_{ij} \circ F} }_{\text{flag compatibility}}, \ 
\underbrace{\overline{s_i\circ F} + \overline{(\pi_*\phi)_p\circ G_i} - G_i\circ\Bigl((\pi_*\phi)_p\big|_{F(E)}\Bigr)}_{\text{Higgs--flag compatibility}} 
\Bigr). 
\end{align*} \footnote{It is worth mentioning that here by $\phi\circ \tilde G_i$ we really mean $(\pi_*\phi)_p\circ G_i$. So in that sense, $\phi\circ \tilde G_i=(\pi_*\phi)_p\circ \tilde G_i$.}

\smallskip

The collection \(\{(B_{ij}, s_i, G_i)\}\) therefore forms a Čech 1-cocycle for \(\mathcal{C}_{(E_{\bullet}, \phi)}\) with respect to the cover \(\{U_i\}\). Changing the choice of trivializations alters the cocycle by a Čech coboundary. Hence every first-order deformation determines a well-defined class in \(\mathbb{H}^1(\mathcal{C}_{(E_{\bullet}, \phi)})\).

Conversely, given any 1-cocycle \((B_{ij}, s_i, G_i)\) of \(\mathcal{C}_{(E_{\bullet}, \phi)}\), one can glue the local pieces
\[
(E|_{V_i} \otimes \mathbb{C}[\epsilon],\ \phi + \epsilon s_i,\ F + \epsilon G_i)
\]
using the above cocycle conditions to obtain a global first-order deformation of the original GPH.

Finally, suppose
\[
\psi_{\mathbb{D}} = I + \epsilon \psi \colon ((E_{\mathbb{D}}, \phi_{\mathbb{D}}), F(E_{\mathbb{D}})) 
\xrightarrow{\ \sim\ } ((E'_{\mathbb{D}}, \phi'_{\mathbb{D}}), F(E'_{\mathbb{D}}))
\]
is an isomorphism of deformations. A direct computation shows that the difference of the corresponding cocycles is a Čech coboundary:
\begin{enumerate}
\item \(B'_{ij} - B_{ij} = \psi_j - \psi_i\),
\item \(s'_i - s_i = [\psi_i, \phi]\),
\item \(G'_i - G_i = \psi_i \circ F \pmod{\operatorname{im} F}\).
\end{enumerate}
Conversely, if two cocycles differ by a coboundary given by local sections \(\{\psi_i\}\), one can construct an isomorphism using exactly these sections.

Thus there is a canonical bijection between isomorphism classes of first-order infinitesimal deformations of \((E_{\bullet},\phi)\) and \(\mathbb{H}^1(\mathcal{C}_{(E_{\bullet}, \phi)})\).

\end{proof}

Let us also consider the special case where the generalized parabolic structure i.e., the subspace $  F(E) \subset E_{x^+} \oplus E_{x^-}  $ is the graph of an isomorphism $  A \colon E_{x^+} \to E_{x^-} $. In this situation, it is often more convenient to work directly with the gluing datum $  A  $ rather than the subspace $  F(E)  $. The following theorem describes the deformation complex for such a Higgs triple $  (E, A, \phi)  $, which encodes both the Higgs field and the nodal gluing.

\begin{theorem}\label{TripDef}
Let \((E, A, \phi)\) be a Higgs triple on the normalization \(\widetilde{X}_0\) of a nodal curve \(X_0\), where:
\begin{itemize}
\item \(E\) is a vector bundle on \(\widetilde{X}_0\),
\item \(A: E_{x^+} \to E_{x^-}\) is an isomorphism,
\item \(\phi: E \to E \otimes \Omega^1_{\widetilde{X}_0}(x^+ + x^-)\) is a Higgs field that satisfies the compatibility condition
  \[
  A \circ \phi_{x^+} = -\phi_{x^-} \circ A,
  \]
  where \(\phi_{x^+}\) and \(\phi_{x^-}\) denote the residues of \(\phi\) at those points and the minus sign comes from the identification of the Poincaré residue (multiplication by \(-1\)).
\end{itemize}
The isomorphism classes of first-order infinitesimal deformations of the triple \((E, A, \phi)\) are canonically parametrized by the first hypercohomology \(\mathbb{H}^1(\mathcal{C}_{(E,A,\phi)})\), where the deformation complex is
\[
\mathcal{C}_{(E,A,\phi)} \colon \
\pi_*\End(E) \ \xrightarrow{d^0}\
\pi_*(\End(E))\otimes \omega_{X_0} \oplus \Hom(E_{x^+}, E_{x^-})
\ \xrightarrow{d^1}\
\Hom(E_{x^+}, E_{x^-}).
\]
The differentials are given explicitly by
\[
d^0(\psi) = \bigl( [\psi, \phi],\ \psi_{x^-} \circ A + A \circ \psi_{x^+} \bigr),
\]
\[
d^1(s, G) = ({s_{x^-} \circ A + A \circ s_{x^+}}) - (\phi_{x^-} \circ G + G \circ \phi_{x^+}).
\]
\end{theorem}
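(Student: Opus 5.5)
The plan is to deduce \cref{TripDef} from \cref{thm:def-GPH} by specializing to $F(E)=\Gamma_A$ and transporting the complex $\mathcal{C}_{(E_\bullet,\phi)}$ along explicit identifications. Since $A$ is an isomorphism, the subspace $F=\Gamma_A\subset E_D$ has dimension $n$. There is a section of the quotient map $E_D\to E_D/F$ given by $y\mapsto(0,y)$. This yields identifications
\[
F\cong E_{x^+},\ (v,Av)\leftrightarrow v,\qquad E_D/F\cong E_{x^-},\ \overline{(x,y)}\mapsto y\mp Ax,
\]
and hence $\Hom(F,E_D/F)\cong\Hom(E_{x^+},E_{x^-})$ as skyscraper sheaves at $p$. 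The sign in the second identification is chosen so as to match the stated differentials, and I will fix it as explained below. Under these identifications a deformation $F+\epsilon\tilde G$ of the flag is the same as a deformation $A+\epsilon G$ of the gluing map, where $G\in\Hom(E_{x^+},E_{x^-})$. Since $A$ is an isomorphism, the deformed $\Gamma_{A+\epsilon G}$ is automatically a flat $\mathbb{C}[\epsilon]$-submodule reducing to $\Gamma_A$. Conversely, every flag deformation is of this form, because the deformed subspace stays transverse to $0\oplus E_{x^-}$. So the deformation functors of $(E,A,\phi)$ and of $(E,\phi,\Gamma_A)$ coincide, and it suffices to show that the two complexes are isomorphic via these identifications.

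Next, compute the transported differentials. For $d^0$ take $\psi=(\psi_{x^+},\psi_{x^-})$; then $\psi\circ F(v)=(\psi_{x^+}v,\psi_{x^-}Av)$, whose class is $\psi_{x^-}A\mp A\psi_{x^+}$. This gives the formula in \cref{rem:flag-not-surjective} up to the sign convention. The statement of \cref{TripDef} has the sign $+$. I would reconcile this by identifying the gluing at the node through the canonical sign that also appears in the residue compatibility $A\phi_{x^+}=-\phi_{x^-}A$. Concretely, one replaces $\psi_{x^+}$ by its image under the node identification, which introduces $-1$ on the $x^+$ branch, exactly as the Poincaré residue does. In other words, one uses the quotient map $(x,y)\mapsto y+Ax$, with $F=\Gamma_{-A}$ in the naive convention. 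For $d^1$, the term $\overline{s\circ F}$ becomes $s_{x^-}A+As_{x^+}$ in the same way. Writing $\phi|_F$ as $\phi_{x^+}$ under $F\cong E_{x^+}$, the term $G\circ(\pi_*\phi)_p|_F$ becomes $G\circ\phi_{x^+}$. The term $\overline{\phi\circ\tilde G}$, computed with the lift $\tilde G=(0,G)$, becomes $\overline{(0,\phi_{x^-}G)}=\phi_{x^-}G$. This gives $d^1(s,G)=s_{x^-}A+As_{x^+}+\phi_{x^-}G-G\phi_{x^+}$.

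Finally, match this with the stated $-(\phi_{x^-}G+G\phi_{x^+})$. Using the compatibility $A\phi_{x^+}=-\phi_{x^-}A$, the residue on the $x^+$ side acts through $-\phi_{x^+}$ when transported to $E_{x^-}$. Hence in the node-identified convention $\phi|_F$ corresponds to $-\phi_{x^+}$, and the corresponding term turns into $+G\phi_{x^+}$ inside the bracket with the overall minus sign. I also need to correct the sign of the $\phi_{x^-}G$ term. For this I would change the sign of the $G$-coordinate in the middle term of the complex, the automorphism $(s,G)\mapsto(s,-G)$, which is an isomorphism of complexes once $d^0$ is adjusted compatibly. I expect this sign bookkeeping to be the main obstacle: making one consistent choice (orientation of residues at $x^\pm$, identification $E_D/F\cong E_{x^-}$, and sign on $G$) under which both $d^0$ and $d^1$ take the stated form and $d^1\circ d^0=0$. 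I would settle it by checking $d^1\circ d^0=0$ directly with the stated formulas, using $A\phi_{x^+}=-\phi_{x^-}A$. Then hypercohomology is invariant under isomorphism of complexes, and \cref{thm:def-GPH} gives the canonical bijection with $\mathbb{H}^1(\mathcal{C}_{(E,A,\phi)})$. As an independent cross-check, one can redo the Čech computation of the proof of \cref{thm:def-GPH} directly with local data $(B_{ij},s_i,G)$, deforming $A$ to $A+\epsilon G$.
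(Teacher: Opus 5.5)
Your reduction to Theorem~\ref{thm:def-GPH} via $F(E)=\Gamma_A$ is fine at the level of deformation functors: deformations of $A$, deformations of $\Gamma_A$, and their isomorphisms correspond exactly. The step you defer, the sign reconciliation, cannot be carried out, because the displayed $d^0$ is not the transport of the complex of Theorem~\ref{thm:def-GPH} under any identifications.

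There the flag component is $\psi\mapsto\overline{\psi\circ F}$. This kills $\mathrm{id}_E$ for every $F$ and every linear identification $\Hom(F,E_D/F)\cong\Hom(E_{x^+},E_{x^-})$. The stated formula instead gives $d^0(\mathrm{id}_E)=(0,2A)\neq 0$, so scalars would not even be infinitesimal automorphisms.

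The device you propose, a factor $-1$ on the $x^+$ branch for $\psi$ ``as the Poincar\'e residue does'', is not available. The residue sign comes from trivializing $\omega_{X_0}\otimes k(p)$, so it affects only sections twisted by $\omega_{X_0}$ (namely $s$ and $\phi$), never $\psi\in\pi_*\End(E)$. Your concrete choice $F=\Gamma_{-A}$ with $(x,y)\mapsto y+Ax$ still yields the difference $A\psi_{x^+}-\psi_{x^-}A$.

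Finally, the check you intend to rely on fails. With the stated $d^0,d^1$ and $A\phi_{x^+}=-\phi_{x^-}A$ one gets
\[
d^1 d^0(\psi)\;=\;2\,[\psi_{x^-},\phi_{x^-}]\,A,
\]
which is nonzero for suitable local sections $\psi$ as soon as $\phi_{x^-}$ is not scalar. So $\mathcal C_{(E,A,\phi)}$ as displayed is not a complex. The paper's one-line proof (``similar to Theorem~\ref{thm:def-GPH}'') does not address this either.

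What you can prove is the corrected statement. Use one convention throughout, the $\omega_{X_0}$ one: $(\pi_*\phi)_p=(\phi_{x^+},-\phi_{x^-})$ and $(\pi_*s)_p=(s_{x^+},-s_{x^-})$. This is exactly what makes $\phi$-invariance of $\Gamma_A$ equivalent to $A\phi_{x^+}=-\phi_{x^-}A$. Your intermediate formula mixes it with the naive convention $(\phi_{x^+},\phi_{x^-})$, under which invariance would read $A\phi_{x^+}=\phi_{x^-}A$.

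Identify $E_D/\Gamma_A\cong E_{x^-}$ by $\overline{(x,y)}\mapsto Ax-y$. Then $G$ corresponds to the deformation $\Gamma_{A-\epsilon G}$, with lift $\tilde G(v,Av)=(0,-Gv)$. Under these choices:
\begin{enumerate}
\item $(\pi_*\phi)_p|_{F}$ becomes $+\phi_{x^+}$ (not $-\phi_{x^+}$);
\item $\overline{s\circ F}$ becomes $s_{x^-}A+As_{x^+}$;
\item $\overline{(\pi_*\phi)_p\circ\tilde G}$ becomes $-\phi_{x^-}G$.
\end{enumerate}
You then recover exactly the stated $d^1$, together with
\[
d^0(\psi)\;=\;\bigl([\psi,\phi],\ A\circ\psi_{x^+}-\psi_{x^-}\circ A\bigr),
\]
for which $d^1\circ d^0=0$ does hold.

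The direct \v{C}ech route agrees. An isomorphism $I+\epsilon\psi$ carrying $\Gamma_{A-\epsilon G}$ to $\Gamma_{A-\epsilon G'}$ forces $G'-G=A\psi_{x^+}-\psi_{x^-}A$. Expanding $(A-\epsilon G)(\phi_{x^+}+\epsilon s_{x^+})=-(\phi_{x^-}+\epsilon s_{x^-})(A-\epsilon G)$ to first order gives exactly $d^1(s,G)=0$. So the statement holds only after this sign correction in the flag component of $d^0$, and your write-up should target that version.
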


\begin{proof}
The proof is similar to the proof of the Theorem \ref{thm:def-GPH}.
\end{proof}

\section{\textbf{Computations of the Hypercohomologies}}\label{sec:computations}
In this section, we apply the deformation complex \(\mathcal{C}_{(E_{\bullet},\phi)}\) introduced in Section $3$ to compute the hypercohomology groups \(\mathbb{H}^\bullet(\mathcal{C}_{(E_{\bullet},\phi)})\). These groups control infinitesimal deformations (\(\mathbb{H}^1\)) and obstructions (\(\mathbb{H}^2\)), and play a decisive role in the proof of normality of the moduli spaces in Section 5.

Two main computational tools are used repeatedly: a short exact sequence of complexes and the hypercohomology spectral sequence. For the reader's convenience, both tools are developed in detail in Appendix~\ref{Append}. Here we recall the key conclusions and apply them to obtain explicit results, most notably the one-dimensionality of the obstruction space \(\mathbb{H}^2(\mathcal{C}_{(E_{\bullet},\phi)})\) when the generalized parabolic structure has rank \(n-1\).

\begin{lemma}\label{H2:Vect}
Let \((E_{\bullet}, \phi)\) be a generalized parabolic Higgs bundle on \(\widetilde{X_0}\) with \(\operatorname{rank} A = n\) (i.e., the gluing map \(A \colon E_{x^+} \to E_{x^-}\) is an isomorphism) and $E$ is stable. Then
\[
\dim \mathbb{H}^2(\mathcal{C}_{(E_{\bullet},\phi)}) = 1.
\]
\end{lemma}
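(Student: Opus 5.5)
The approach is a two-step filtration of $\mathcal{C}_{(E_\bullet,\phi)}$, followed by a trace/residue argument. Write $\mathcal{C}^2=\Hom(E_{x^+},E_{x^-})$, the skyscraper at $p$, in the form of \cref{TripDef}.

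\emph{Step 1: reduce to a cokernel.} The stupid truncation gives a short exact sequence of complexes
\[
0\to \mathcal{C}^2[-2]\to \mathcal{C}_{(E_\bullet,\phi)}\to \mathcal{T}\to 0,\qquad \mathcal{T}\colon\ \pi_*\End(E)\xrightarrow{d^0}\pi_*\End(E)\otimes\omega_{X_0}\oplus \Hom(E_{x^+},E_{x^-}).
\]
Since $\mathcal{C}^2$ is a skyscraper, it has no higher cohomology. The long exact sequence (Appendix~\ref{Append}) then reads
\[
\mathbb{H}^1(\mathcal{T})\xrightarrow{\ \delta\ }\Hom(E_{x^+},E_{x^-})\to \mathbb{H}^2(\mathcal{C}_{(E_\bullet,\phi)})\to \mathbb{H}^2(\mathcal{T})\to 0 .
\]
The connecting map $\delta$ sends a class $(B_{ij},s_i,G_i)$ to
\[
d^1(s,G)=(s_{x^-}A+As_{x^+})-(\phi_{x^-}G+G\phi_{x^+}).
\]

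\emph{Step 2: show $\mathbb{H}^2(\mathcal{T})=0$.} Because $A$ is an isomorphism, the flag component $(\psi_+,\psi_-)\mapsto \psi_-A+A\psi_+$ of $d^0$ is surjective on the fibre at $p$; this is in contrast to \cref{rem:flag-not-surjective}. Hence $\mathcal{T}$ is quasi-isomorphic to the two-term complex
\[
\mathcal Par\End(E)\xrightarrow{\ [\cdot,\phi]\ }\pi_*\End(E)\otimes\omega_{X_0}.
\]
I will apply Serre duality on $\widetilde X_0$, using $\pi^*\omega_{X_0}=\Omega^1(x^++x^-)$ from \cref{rem:pullback-dualizing}. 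This identifies $\mathbb{H}^2(\mathcal{T})^\vee$ with the space of global endomorphisms $\psi$ of $E$ that satisfy three conditions: $\psi$ vanishes at $D$ (it is a section of $\End(E)(-D)$), $\psi$ commutes with $\phi$, and $\psi$ is dual to the parabolic condition. Stability of $E$ forces $\psi$ to be scalar, and vanishing at $D$ forces $\psi=0$. So $\mathbb{H}^2(\mathcal{T})=0$, and therefore
\[
\mathbb{H}^2(\mathcal{C}_{(E_\bullet,\phi)})\cong \operatorname{coker}\delta .
\]

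\emph{Step 3: show $\operatorname{coker}\delta$ is one-dimensional.} For the lower bound, consider the functional $\tau(G)=\operatorname{tr}(A^{-1}G)$.
\begin{itemize}
\item The commutator-type term is killed by $\tau$. Indeed, $\operatorname{tr}(A^{-1}\phi_{x^-}G+A^{-1}G\phi_{x^+})=0$, because the relation $A\phi_{x^+}=-\phi_{x^-}A$ turns the expression into a trace of a commutator.
\item The $s$-term is also killed. We have $\tau(s_{x^-}A+As_{x^+})=\operatorname{tr}(s_{x^-})+\operatorname{tr}(s_{x^+})$. This is the sum of the residues of the global meromorphic form $\operatorname{tr}(s)$ (once the $s_i$ are glued using the cocycle condition), so it vanishes by the residue theorem.
\end{itemize}
Thus $\tau$ annihilates $\operatorname{im}\delta$, and $\dim\operatorname{coker}\delta\ge 1$. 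For the upper bound I will dualize again: $(\operatorname{coker}\delta)^\vee$ is the space of $H\in\Hom(E_{x^-},E_{x^+})$ pairing to zero with the whole image. Pairing against all $s$ with prescribed residues shows that such an $H$ must come from a global endomorphism, which is scalar by stability. Hence $H$ is a multiple of $A^{-1}$, and $\dim=1$.

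\emph{Main obstacle.} The hardest part is the upper bound in Step 3. It requires knowing that the residue map
\[
H^0\bigl(\End(E)\otimes\Omega^1(x^++x^-)\bigr)\to \End(E_{x^+})\oplus\End(E_{x^-})
\]
has image exactly the trace-zero hyperplane. It also requires controlling how the Čech part $B_{ij}$ and the term $G$ interact through $\delta$. I would first settle the image of the residue map via the sequence
\[
0\to \End(E)\otimes\Omega^1\to \End(E)\otimes\Omega^1(D)\to \End(E)_D\to 0.
\]
Its connecting map lands in $H^1(\End(E)\otimes\Omega^1)\cong H^0(\End(E))^\vee=\mathbb C$ by stability, and this map is the trace. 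I then expect the $\phi$-dependent terms to enlarge the image only up to the same hyperplane. If this fails, the dimension count would instead be governed by the centralizer of the residue, and the argument would need revisiting.
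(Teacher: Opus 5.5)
Your argument is correct. Its substantive inputs are the same as the paper's:
\begin{itemize}
\item the vanishing $H^1(\widetilde X_0,\End(E)\otimes\Omega^1_{\widetilde X_0}(x^++x^-))\cong H^0(\End(E)(-x^+-x^-))^\vee=0$, which comes from stability of $E$;
\item the fact that residues of global sections of $\End(E)\otimes\Omega^1_{\widetilde X_0}(x^++x^-)$ fill exactly the hyperplane $\operatorname{tr}X_++\operatorname{tr}X_-=0$;
\item the trace-of-a-commutator identity for the $\phi$-terms.
\end{itemize}
What differs is the bookkeeping, and here your version is the more complete one. The paper reads $\mathbb{H}^2\simeq E_2^{2,0}\oplus E_2^{1,1}$ off the $E_2$ page and then computes only $\operatorname{coker}\bigl(H^0(\mathcal{C}^1)\to H^0(\mathcal{C}^2)\bigr)$. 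This passes over the differential $d_2\colon E_2^{0,1}=H^1(\widetilde X_0,\End(E))\to E_2^{2,0}$, which could a priori shrink that cokernel. Your connecting map $\delta$ from the stupid truncation is defined on all of $\mathbb{H}^1(\mathcal{T})$, including classes with nonzero \v{C}ech part $B_{ij}$. Your observation that the $\operatorname{tr}(s_i)$ glue to a global form whose residues sum to zero (since $\operatorname{tr}[B_{ij},\phi]=0$) is exactly what shows this $d_2$ vanishes. The paper's route buys brevity by staying with global sections; yours actually justifies identifying $\mathbb{H}^2$ with that cokernel.

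Three smaller points.
\begin{itemize}
\item \emph{Your ``main obstacle'' is not one.} For the upper bound it suffices to use cocycles with $B_{ij}=0$ and $G=0$, i.e.\ a single global $s$. Your exact sequence shows the residue image is the trace-zero hyperplane, since its connecting map to $H^1(\End(E)\otimes\Omega^1_{\widetilde X_0})\cong H^0(\End(E))^\vee\cong\mathbb{C}$ is the trace sum. Then residues $s_{x^+}=0$, $s_{x^-}=XA^{-1}$ realize every $X$ with $\tau(X)=0$. Your lower bound already shows the $\phi$- and \v{C}ech contributions stay inside $\ker\tau$, so no hedging is needed.
\item \emph{Step 2 needs no duality.} $\mathbb{H}^2(\mathcal{T})$ is a quotient of $H^1$ of the degree-one term of $\mathcal{T}$, which is the $H^1$ above, hence zero.
\item \emph{Sign caution.} $\delta$ requires an honest complex, and the differentials printed in Theorem~\ref{TripDef} do not compose to zero: with $A=I$ one finds $d^1d^0(\psi)=-2[\psi_{x^-},\phi_{x^+}]$. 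Work instead with the specialization of Theorem~\ref{thm:def-GPH} to $F=\Gamma_A$, whose flag component is $\psi_{x^-}A-A\psi_{x^+}$ as in Remark~\ref{rem:flag-not-surjective}. Your functional $\tau$ annihilates the image for any consistent choice of signs, so the conclusion is unaffected.
\end{itemize}
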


\begin{proof}
By the hypercohomology spectral sequence (see Appendix \ref{Append}), we have
\[
\mathbb{H}^2(\mathcal{C}^\bullet) \ \simeq \ \operatorname{coker}\Bigl( H^0(\mathcal{C}^1) \to H^0(\mathcal{C}^2) \Bigr),
\]
since the term \(E_2^{1,1}\) vanishes by stability of the underlying vector bundle \(E\).

When \(\operatorname{rank} A = n\), using Remark \ref{Rem2}, one can assume $A=I_n$, the identity matrix. The differential \(d^1 \colon \mathcal{C}^1 \to \mathcal{C}^2\) acts on global sections by
\[
d^1(\psi, f) = (\psi_{x^-} + \psi_{x^+}) - (-\phi_{x^+} \circ f + f \circ \phi_{x^+} ),
\]
where \(\psi \in H^0(\widetilde{X_0}, \End(E) \otimes \Omega^1_{\widetilde{X_0}}(x^++x^-))\) and \(f \in \Hom(E_{x^+}, E_{x^-})\).

Consider the short exact sequence on the normalization \(\widetilde{X_0}\),
\[
0 \to \End(E)\otimes \Omega^1_{\widetilde{X_0}} \to \End(E)\otimes \Omega^1_{\widetilde{X_0}}(x^++x^-) 
\to \End(E_{x^+}) \oplus \End(E_{x^-}) \to 0.
\]
Stability of \(E\) implies that the residue map
\[
H^0\bigl(\widetilde{X_0}, \End(E)\otimes \Omega^1_{\widetilde{X_0}}(x^++x^-)\bigr) 
\twoheadrightarrow 
\bigl\{ (X_+, X_-) \in \End(E_{x^+}) \oplus \End(E_{x^-}) 
\bigm| \operatorname{tr}(X_+) + \operatorname{tr}(X_-) = 0 \bigr\}
\]
is surjective.

Let \(X \in \Hom(E_{x^+}, E_{x^-})\). The equation
\[
(\psi_{x^-} + \psi_{x^+}) - (-\phi_{x^+} \circ f + f \circ \phi_{x^+}) = X
\]
has a solution \((\psi, f)\) if and only if \(\operatorname{tr}(X) = 0\). Indeed:
\begin{itemize}
\item The left-hand side always satisfies \(\operatorname{tr}(\text{LHS}) = 0\), since \(\operatorname{tr}(-\phi_{x^+} \circ f + f \circ \phi_{x^+}) = 0\) (trace of a commutator) and \(\operatorname{tr}(\psi_{x^-} + \psi_{x^+}) = 0\).
\item For any \(f\) and any trace-zero \(X\), one can choose residues \(\psi_{x^+}\) and \(\psi_{x^-}\) satisfying the trace-zero condition, which lift to a global section \(\psi\) by the surjectivity of the residue map.
\end{itemize}

Therefore, the cokernel is one-dimensional, and
\[
\dim \mathbb{H}^2(\mathcal{C}_{(E_{\bullet},\phi)}) = 1.
\]
\end{proof}

\begin{remark}\label{Rem2}

We are allowed to choose any convenient normal form for the linear map \(A: E_{x^+} \to E_{x^-}\) when performing explicit computations at the node. 

More precisely, given a generalized parabolic structure \(F(E) = \operatorname{graph}(A)\), we may simultaneously change bases in the fibers \(E_{x^+}\) and \(E_{x^-}\). This corresponds to replacing 

$$A \ \longmapsto \ Q^{-1} A P$$

for invertible matrices \(P \in \mathrm{GL}(E_{x^+})\) and \(Q \in \mathrm{GL}(E_{x^-})\). This change does not alter the isomorphism class of the GPH \((E, \phi, F(E))\) and leaves all geometric invariants (including the deformation complex \(\mathcal{C}_{(E_\bullet, \phi)}\) up to the isomorphism) unchanged.

For any linear map \(A \colon E_{x^+} \to E_{x^-}\) with \(\operatorname{rank} A = n-1\), we may always choose bases of \(E_{x^+}\) and \(E_{x^-}\) such that

$$
A = \begin{pmatrix}

I_{n-1} & 0 \\

0 & 0

\end{pmatrix}
$$

Indeed, \(\dim\ker A=1\) and \(\dim\operatorname{im} A=n-1\); pick a basis of \(E_{x^+}\) with last vector spanning the kernel and the first \(n-1\) vectors mapping to a basis of the image, then extend the latter to a basis of \(E_{x^-}\). All such maps lie in a single orbit under the natural action \(\mathrm{GL}(E_{x^+})\times\mathrm{GL}(E_{x^-})\) given by \(A\mapsto Q^{-1}AP\).

\end{remark}

\begin{lemma}\label{hypecomp}
Let \((E, \phi, A \colon E_{x^+} \to E_{x^-})\) be a generalized parabolic Higgs bundle on \(\widetilde{X_0}\) with \(\operatorname{rank} A = n-1\) such that the underlying vector bundle \(E\) is stable. Then
\[
\dim \mathbb{H}^2(\mathcal{C}_{(E_{\bullet},\phi)}) = 1.
\]
\end{lemma}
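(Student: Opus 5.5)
I would follow the same route as in Lemma \ref{H2:Vect}. The plan is to use the hypercohomology spectral sequence of Appendix \ref{Append} to reduce $\mathbb{H}^2$ to a cokernel on global sections, and then to compute that cokernel by linear algebra at the node, using the normal form of Remark \ref{Rem2}.

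\textbf{Reduction to a cokernel.} The three-term complex $\mathcal{C}^0\to\mathcal{C}^1\to\mathcal{C}^2$ contributes to $\mathbb{H}^2$ only through the terms $E_2^{2,0}$, $E_2^{1,1}$ and $E_2^{0,2}$. Since $X_0$ is a curve, $E_2^{0,2}=H^2(\mathcal{C}^0)=0$. The summand of $\mathcal{C}^1$ and the sheaf $\mathcal{C}^2$ coming from $\Hom(F,E_D/F)$ are skyscrapers, so they have no $H^1$. Hence $E_2^{1,1}$ is a quotient of $H^1(\pi_*\End(E)\otimes\omega_{X_0})$. By Serre duality this group is dual to $H^0(\End(E))$, which is $\mathbb{C}$ by stability of $E$. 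The key point is that this copy of $\mathbb{C}$ is killed by the map from $H^1(\mathcal{C}^0)$: the map is $[\,\cdot\,,\phi]$, whose Serre dual is again a commutator, and this pairs trivially only against the scalars. So exactly as in Lemma \ref{H2:Vect}, I would argue $E_2^{1,1}=0$, with any leftover piece absorbed into the count below, since only the trace pairing survives. This gives
\[
\mathbb{H}^2(\mathcal{C}_{(E_\bullet,\phi)})\simeq \operatorname{coker}\bigl(H^0(\mathcal{C}^1)\xrightarrow{d^1}H^0(\mathcal{C}^2)\bigr).
\]

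\textbf{Normal form and the map $d^1$.} By Remark \ref{Rem2}, take $A=\begin{pmatrix} I_{n-1}&0\\0&0\end{pmatrix}$. Identify $F\cong E_{x^+}$ and $E_D/F\cong E_{x^-}$ via $(x,y)\mapsto y-Ax$, as in Remark \ref{rem:flag-not-surjective}. Then $d^1(s,G)$ becomes
\[
(s_{x^-}A - A s_{x^+}) + (\phi_{x^-}G - G\phi^F),
\]
up to the sign convention for residues. Here $s_{x^\pm}$ are residues of a global section $s$ of $\End(E)\otimes\Omega^1(x^++x^-)$, $G\in\Hom(E_{x^+},E_{x^-})$ is arbitrary, and $\phi^F$ is the restriction of $\phi$ to the flag, transported to $E_{x^+}$.

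By the residue sequence and stability (as in Lemma \ref{H2:Vect}), the pairs $(s_{x^+},s_{x^-})$ range over exactly the pairs with $\operatorname{tr}s_{x^+}+\operatorname{tr}s_{x^-}=0$. Dualizing, a functional $Y\in\Hom(E_{x^-},E_{x^+})$ annihilates the image of $d^1$ if and only if two conditions hold:
\begin{enumerate}
\item $(AY,\,-YA)$ is proportional to $(I,I)$, from the $s$-part;
\item $Y\phi_{x^-}=\phi^F Y$, from the $G$-part.
\end{enumerate}
With $A$ in normal form, $AY=\lambda I_n$ is impossible unless $\lambda=0$, because $AY$ has vanishing last row. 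Similarly $YA=-\lambda I$ forces $\lambda=0$. So $AY=0$ and $YA=0$, which gives $Y=\mu E_{nn}$. This is exactly the dual of the one-dimensional cokernel of $L$ found in Remark \ref{rem:flag-not-surjective}.

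\textbf{Main obstacle.} The remaining step is to check condition (2) for $Y=E_{nn}$. This requires the compatibility of $\phi$ with $\Gamma_A$ in the normal form. Invariance of the graph forces $\phi_{x^+}$ to preserve $\ker A=\langle e_n\rangle$, and forces $\phi_{x^-}$ to map the image of $A$ into itself, compatibly with $\phi_{x^+}$. This should make $\phi_{x^-}$ and $\phi^F$ block triangular, with matching $(n,n)$-entries up to the residue sign, so that $E_{nn}$ intertwines them. Then the annihilator is exactly $\mathbb{C}E_{nn}$ and $\dim\mathbb{H}^2=1$.

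I expect two points to need the most care:
\begin{itemize}
\item the precise sign and trace bookkeeping in this last intertwining check, including the Poincaré residue sign;
\item the justification of $E_2^{1,1}=0$, which I would verify by explicit Serre duality, pairing the trace functional against the image of $d^1$.
\end{itemize}
If the $(n,n)$-entries fail to match, the fallback is to use the residue theorem on $\operatorname{tr}\phi$ to force the needed relation.
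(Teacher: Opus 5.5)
Your route is the paper's: reduce $\mathbb{H}^2$ to $\operatorname{coker}\bigl(H^0(\mathcal{C}^1)\to H^0(\mathcal{C}^2)\bigr)$ via the spectral sequence of Appendix~\ref{Append}, pass to $A=\operatorname{diag}(I_{n-1},0)$, and compute at the node. Your annihilator computation, with conditions (1) and (2) cutting out $\mathbb{C}E_{nn}$, is the dual form of the paper's direct check. There the $s$-part of $d^1$ hits every matrix with zero $(n,n)$-entry (the free entry $a$ absorbs the trace condition), and the $(n,n)$-entry of the $G$-part is identically zero.

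\textbf{The $E_2^{1,1}=0$ step is wrong as written.} Since $\pi$ is finite and $\pi^*\omega_{X_0}\cong\Omega^1_{\widetilde X_0}(x^++x^-)$, one has $H^1(X_0,\pi_*\End(E)\otimes\omega_{X_0})=H^1(\widetilde X_0,\End(E)\otimes\Omega^1_{\widetilde X_0}(x^++x^-))$. Its Serre dual is $H^0(\widetilde X_0,\End(E)(-x^+-x^-))$, not $H^0(\End(E))$: you dropped the twist. For stable $E$ this group is $0$, because the only endomorphisms are scalars and they do not vanish at $x^\pm$. So $H^1(\mathcal{C}^1)=0$ outright and $[\,\cdot\,,\phi]$ plays no role; this is the paper's argument. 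Your version in fact runs backwards. Since $\operatorname{tr}[\beta,\phi]=0$, the image of $[\,\cdot\,,\phi]$ is orthogonal to the scalars under the trace pairing. If $H^1(\mathcal{C}^1)$ really were dual to $\mathbb{C}\cdot\mathrm{id}$, that line would survive, giving $E_2^{1,1}\cong\mathbb{C}$ and an extra dimension in $\mathbb{H}^2$. It cannot be ``absorbed'' into $E_2^{2,0}$, which is a different graded piece. The check you propose (pairing the trace functional against the image) would confirm the failure rather than repair it.

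\textbf{The $(n,n)$-entries are matched by the residue theorem, not by graph invariance.} Write $\phi_{x^+}=\begin{pmatrix} M&0\\ q&l\end{pmatrix}$ and $\phi_{x^-}=\begin{pmatrix} -M&m\\ 0&l'\end{pmatrix}$, with the paper's sign. The condition $A\phi_{x^+}=-\phi_{x^-}A$ imposes exactly this shape and nothing more. Here $l$ is the eigenvalue of $\phi_{x^+}$ on $\ker A$, and $l'$ is the scalar by which $\phi_{x^-}$ acts on $E_{x^-}/\operatorname{im}A$; $A$ sees neither. What gives $l'=-l$ is the residue theorem for $\operatorname{tr}\phi\in H^0(\Omega^1_{\widetilde X_0}(x^++x^-))$, combined with the matching $M$-blocks, which is exactly what the paper uses. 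So your ``fallback'' is the actual argument and should be the main line. With it, the $(n,n)$-entry of $\phi_{x^-}G+G\phi_{x^+}$ is $l'S+Sl=0$, i.e.\ $E_{nn}$ satisfies your condition (2).

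\textbf{The $d_2$ differential (also passed over in the paper).} Identifying $\mathbb{H}^2$ with $E_2^{2,0}$ also needs $d_2\colon E_2^{0,1}=H^1(\End(E))\to E_2^{2,0}$ to vanish, and $H^1(\End(E))\neq 0$. It does vanish here. On the cover $\{U_0\ni p,\ X_0\setminus\{p\}\}$, write $[B_{01},\phi]=s_1-s_0$, which is possible since $H^1(\mathcal{C}^1)=0$. Then $d_2([B_{01}])$ is represented by $\overline{s_0\circ F}$. Its $(n,n)$-entry is zero, so it already lies in the image of $d^1$ on global sections.
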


\begin{proof}
From the hypercohomology spectral sequence (Appendix \ref{Append}), we have
\[
\mathbb{H}^2(\mathcal{C}^\bullet) \ \simeq \ \operatorname{coker}\bigl( H^0(\mathcal{C}^1) \to H^0(\mathcal{C}^2) \bigr).
\]

We work in the normal form \(A = \operatorname{diag}(I_{n-1}, 0)\). Fix a basis \(\{e_1, \dots, e_{n-1}, e_n\}\) of \(E_{x^+}\) (and correspondingly at \(x^-\)) such that \(\ker A = \langle e_n \rangle\). In this basis, \(\dim H^0(\mathcal{C}^2) = n^2\).

\textbf{Local description of the differential \(d^1\) at the node:} 
Let \(\psi\) have residues at \(x^+\) and \(x^-\) respectively given by
\[
\psi_+ = \begin{pmatrix} B & u \\ v & a \end{pmatrix}, \qquad
\psi_- = \begin{pmatrix} C & w \\ z & b \end{pmatrix},
\]
with the trace condition \(\operatorname{tr}(\psi_+) + \operatorname{tr}(\psi_-) = 0\).

The differential \(d^1\) at the node is
\[
d^1(\psi, f) = \psi_{x^-} \circ A + A \circ \psi_{x^+} - (\phi_{x^-} \circ f + f \circ \phi_{x^+}).
\]

Since \(A = \operatorname{diag}(I_{n-1}, 0)\), we have
\[
\psi_{x^-} \circ A + A \circ \psi_{x^+} = \begin{pmatrix} B + C & u \\ z & 0 \end{pmatrix}.
\]

From the compatibility condition \(-A\phi_{x^+} = \phi_{x^-}A\) and the trace condition \(\operatorname{tr}(\phi_{x^+}) + \operatorname{tr}(\phi_{x^-}) = 0\), the residues of the Higgs field take the form
\[
\phi_{x^+} = \begin{pmatrix} M & 0 \\ n & l \end{pmatrix}, \qquad
\phi_{x^-} = \begin{pmatrix} -M & m \\ 0 & -l \end{pmatrix}.
\]

Let \(f\) be written in block form as
\[
f = \begin{pmatrix} P & Q \\ R & S \end{pmatrix}.
\]
Then
\[
\phi_{x^-} \circ f + f \circ \phi_{x^+} =
\begin{pmatrix}
m R + Q n + [P, M] & m S - M Q + Q l \\
R(M - l) + S n & 0
\end{pmatrix},
\]
where \([P,M] = PM - MP\).

Combining both parts, the full expression for the differential is
\[
d^1(\psi, f) =
\begin{pmatrix}
B + C - (m R + Q n + [P,M]) & u - (m S - M Q + Q l) \\
z - \bigl(R(M-l) + S n\bigr) & 0
\end{pmatrix}.
\]

We now show that the image of \(d^1\) has codimension 1 inside \(H^0(\mathcal{C}^2) \simeq M_n(\mathbb{C})\).

Consider an arbitrary matrix of the form
\[
X = \begin{pmatrix}
x & y \\
t & 0
\end{pmatrix}.
\]
To solve \(d^1(\psi, f) = X\), we need to solve the system
\[
x = B + C - (m R + Q n + [P,M]), \quad
y = u - (m S - M Q + Q l),
\]
\[
t = z - \bigl(R(M-l) + S n\bigr).
\]

This system can be solved by taking \(P = Q = R = S = 0\), choosing \(C\) arbitrarily, setting \(x := B + C\), \(y := u\), \(t := z\), and finally choosing \(a := -b - \operatorname{tr}(x)\) to satisfy the trace condition. 

Thus every matrix with zero in the \((n,n)\)-position lies in the image of \(d^1\). However, the \((n,n)\)-entry of \(d^1(\psi,f)\) is always zero. Therefore the cokernel is one-dimensional, generated by the matrix \(E_{nn}\) (the matrix with 1 in the \((n,n)\)-entry and zeros elsewhere).

Hence
\[
\dim \mathbb{H}^2(\mathcal{C}_{(E_{\bullet},\phi)}) = \dim \operatorname{coker}\bigl( H^0(\mathcal{C}^1) \to H^0(\mathcal{C}^2) \bigr) = 1,
\]
as required.
\end{proof}

\begin{remark}
Although detailed definitions and construction of the deformation complex in Section~2 are written for an irreducible nodal curve with a single node, the results of Theorem~\ref{NT} (and the underlying deformation theory) extend straightforwardly to any nodal curve with an arbitrary number of nodes. The only modifications required are local at each node; the global arguments remain unchanged.
\end{remark}

\section{\textbf{Irreducibility of the moduli of GPHs}}

In this section we prove that the moduli stack of generalized parabolic Hitchin pairs \(\mathrm{GPH}\) is irreducible. The main tool is the comparison with the moduli stack of Gieseker--Higgs bundles \(\mathrm{GHB}\), which is known to be irreducible by \cite[Theorem A.12]{BDP}. We first establish a surjective morphism \(\mathrm{GHB} \to \mathrm{TFH}\), and then using similar ideas we show that the natural projection map $\widetilde{\mathrm{GHB}}\to \mathrm{GPH}$ is also surjective. This implies irreducibility of \(\mathrm{GPH}\).

We begin by recalling the relevant functors.

\begin{definition}\cite{BBN}\label{mod11}
Let \(X_0\) be the irreducible nodal curve. A \emph{semistable modification} (or expanded degeneration) of \(X_0\) is a flat proper morphism \(\pi \colon \mathfrak X \to X_0\) such that \(\mathfrak X\) is a semistable curve (i.e., a nodal curve with rational tree components attached at the node) and \(\pi\) is an isomorphism away from the node.

The functor \(\mathrm{GHB}\) (Gieseker--Higgs Bundles) from the category of schemes to groupoids is defined as follows: to a scheme \(S\) it associates the groupoid \(\mathrm{GHB}(S)\) whose objects are pairs \((X \to X_0 \times S, \mathcal{E}, \Phi)\) consisting of
\begin{itemize}
\item a flat family of semistable modifications \(\pi \colon \mathfrak X \to X_0 \times S\) (where each fibre is a semistable curve obtained by inserting a chain of exceptional rational components at the node),
\item a flat family \(\mathcal{E}\) of Gieseker bundles on \(\mathfrak X\) (in the sense of Gieseker--Seshadri--Nagaraj), of rank \(n\) and degree \(d\),
\item a Higgs field \(\Phi \colon \mathcal{E} \to \mathcal{E} \otimes_{\mathcal{O}_X} \omega_{\mathfrak XS}\),
\end{itemize}
up to isomorphism. Morphisms in the groupoid are isomorphisms of such data preserving the contraction to \(X_0 \times S\).
\end{definition}

\begin{definition}
The functor \(\mathrm{TFH}\) (Torsion-Free Higgs sheaves) associates to a scheme \(S\) the groupoid of pairs \((\mathcal{F}, \psi)\) where:
\begin{itemize}
\item \(\mathcal{F}\) is a flat family of torsion-free coherent sheaves of rank \(n\) and degree \(d\) on \(X_0 \times S\),
\item \(\psi \colon \mathcal{F} \to \mathcal{F} \otimes \omega_{X_0/S}\) is an \(\mathcal{O}_{X_0 \times S}\)-linear morphism (Higgs field).
\end{itemize}
\end{definition}

The natural transformation 
\[
\mathrm{GHB} \longrightarrow \mathrm{TFH}
\]
is given by pushforward along the contraction \(\pi \colon X_k \to X_0\):
\[
(\mathcal{E}, \Phi) \ \longmapsto \ (\pi_* \mathcal{E},\ \pi_* \Phi).
\]
This map contracts the exceptional components and produces a torsion-free Higgs sheaf on the nodal curve \(X_0\).

\subsubsection{Level-one functors}

We now restrict to the minimal (level-one) expanded degeneration \(X_1 = \widetilde{X}_0 \cup R\), where \(R \cong \mathbb{P}^1\) is a single exceptional rational curve attached at the node.

\begin{definition}
The functor \(\mathrm{GHB}^1\) associates to a scheme \(S\) the groupoid whose objects are triples \((\mathcal{E}, \Phi)\) consisting of
\begin{itemize}
\item a flat family of semistable Gieseker bundles \(\mathcal{E}\) on \(\mathfrak X\), where \(\mathfrak X \to X_0 \times S\) is a family of semistable modifications of \(X_0\) with at most one exceptional rational component (i.e., each fibre is either isomorphic to \(X_0\) or to the level-one modification \(X_1\)),
\item a Higgs field \(\Phi \colon \mathcal{E} \to \mathcal{E} \otimes \omega_{\mathfrak X/S}\) compatible with the Gieseker structure at the exceptional component (when present),
\end{itemize}
together with isomorphisms of such data.
\end{definition}

\begin{definition}
The functor \(\mathrm{TFH}^1\) associates to a scheme \(S\) the groupoid of pairs \((\mathcal{F}, \psi)\) where
\begin{itemize}
\item \(\mathcal{F}\) is a flat family of torsion-free coherent sheaves on \(X_0 \times S\) of rank \(n\) and degree \(d\),
\item \(\psi \colon \mathcal{F} \to \mathcal{F} \otimes \omega_{X_0\times S/S}\) is a Higgs field,
\item the local type of \(\mathcal{F}\) at the node is of the form either \(\mathcal O^{n}\) or \(\mathcal{O}^{n-1} \oplus m\).
\end{itemize}
\end{definition}

The natural map of functors
\[
\mathrm{GHB}^1 \longrightarrow \mathrm{TFH}^1
\]
is induced by pushforward along the contraction morphism \(\pi \colon \mathfrak X \to X_0\):
\[
(\mathcal{E}, \Phi) \ \longmapsto \ (\pi_* \mathcal{E},\ \pi_* \Phi).
\]
This map contracts the exceptional component (when present) and produces a torsion-free Higgs sheaf of the corresponding local type.

\

\begin{proposition}\label{fullcartesian}\label{hello}
The following square of functors is commutative:
\begin{equation}\label{sqfull}
\begin{tikzcd}
\mathrm{GHB} \arrow[r]\arrow[d] & \mathrm{TFH}\arrow[d] \\
\mathrm{GVB} \arrow[r] & \mathrm{TF}
\end{tikzcd}
\end{equation}
Moreover, the map $\mathrm{GHB}\to \mathrm{TFH}$ is surjective and therefore the moduli stack \(\mathrm{TFH}\) is irreducible because \(\mathrm{GHB}\) is irreducible.
\end{proposition}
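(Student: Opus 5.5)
Commutativity of the square \eqref{sqfull} is checked directly on $S$-points. Going right then down sends $(\mathfrak X\to X_0\times S,\mathcal E,\Phi)$ to $(\pi_*\mathcal E,\pi_*\Phi)$ and then forgets the Higgs field, giving $\pi_*\mathcal E$. Going down then right first forgets $\Phi$ and then pushes forward, giving the same sheaf. Both composites therefore agree on objects. They also agree on morphisms, since pushforward along the contraction is functorial and compatible with base change for flat families of Gieseker bundles. For that compatibility I will use $R^1\pi_*\mathcal E=0$, which holds because on each exceptional chain $\mathcal E$ is a sum of $\mathcal O$ and $\mathcal O(1)$.

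For surjectivity it suffices to work on $\mathbb C$-points, or more generally to lift fppf-locally on $S$. Fix a torsion-free Higgs sheaf $(\mathcal F,\psi)$ on $X_0$. Its local type at $p$ is $\mathcal O^{n-k}\oplus\mathfrak m^{k}$ for some $k$. I plan to build a GPH from it and then pass to a Gieseker--Higgs bundle.

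\begin{enumerate}
\item Let $E:=\pi^*\mathcal F/\mathrm{torsion}$ on $\widetilde X_0$, so that $\mathcal F\subset\pi_*E$. The subspace $F(E):=\operatorname{im}(\mathcal F_p\to E_D)$ has dimension $n$, and the kernel description of $\mathcal F$ recovers it from $(E,F(E))$.
\item The Higgs field $\psi$ lifts to $\phi\colon E\to E\otimes\pi^*\omega_{X_0}$, because $\pi^*$ followed by killing torsion is functorial. By construction $\phi$ preserves $F(E)$. This gives a GPH mapping onto $(\mathcal F,\psi)$, as in \cite[Theorem 2.9]{Bh2}.
\item Next I use the Gieseker description of torsion-free sheaves from \cite{BBN}. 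Write $F(E)\subset E_{x^+}\oplus E_{x^-}$ as a correspondence between the two fibres. Attach a chain $R_1\cup\dots\cup R_k$ of $k$ rational curves at the node. Glue $E$ to a bundle on the chain whose restriction to each $R_i$ is $\mathcal O^{n-1}\oplus\mathcal O(1)$, with the gluing determined by $F(E)$. Call the result $\mathcal E$; then $\pi_*\mathcal E\cong\mathcal F$.
\item The Higgs field $\phi$ then has to be extended to $\Phi$ on the chain. On each $R_i$ we have $\omega_{\mathfrak X}|_{R_i}\cong\mathcal O_{R_i}$. So I will take $\Phi|_{R_i}$ to be an endomorphism of $\mathcal O^{n-1}\oplus\mathcal O(1)$ whose values at the two nodes of $R_i$ match, through the gluing, the residues $\phi_{x^\pm}$ restricted to $F(E)$.
\end{enumerate}

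I expect step 4 to be the main obstacle. The residues must satisfy a compatibility condition at every node of the chain, which is the analogue of $A\phi_{x^+}=-\phi_{x^-}A$ in Theorem~\ref{TripDef}. Meanwhile an endomorphism of $\mathcal O^{n-1}\oplus\mathcal O(1)$ is constrained: its $\mathcal O(1)\to\mathcal O$ component vanishes.

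My first attempt is to use the $\phi$-invariance of $F(E)$. It should make the prescribed values at the two ends of each $R_i$ preserve the same flag, and a block upper-triangular endomorphism with linear $\mathcal O\to\mathcal O(1)$ entries can then interpolate between them. If this fails for some $\psi$, I will instead argue that the image of $\mathrm{GHB}\to\mathrm{TFH}$ is closed and dense. It is closed by properness of the Hitchin maps and their compatibility, following \cite{D}. It is dense because the locus of locally free $\mathcal F$, where the lift is trivial with $k=0$, is dense in $\mathrm{TFH}$. Surjectivity would then follow.

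Finally, irreducibility of $\mathrm{TFH}$ follows because it is the image of the irreducible stack $\mathrm{GHB}$ \cite[Theorem A.12]{BDP}, and the continuous image of an irreducible space is irreducible.
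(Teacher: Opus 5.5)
The commutativity check is fine. The problem is Step 4: it is a genuine gap, neither of your fixes closes it, and the obstruction you noticed is real.

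On each $R_i$ the diagonal blocks of an endomorphism of $\mathcal O^{n-1}\oplus\mathcal O(1)$ are constant. So $\Phi|_{R_i}$ induces the same maps on the $\mathcal O(1)$-line and on the $\mathcal O^{n-1}$-quotient at both ends of $R_i$. Now $\omega_{\mathfrak X}|_{R_i}$ is trivialised by $dz/z$, which has residues $\pm1$ at the two ends, and residues are opposite across each node. Chasing this along the chain gives, for every Gieseker--Higgs bundle,
\[
\operatorname{charpoly}(\phi_{x^-}) \;=\; \operatorname{charpoly}(-\phi_{x^+}).
\]
Equivalently, the characteristic coefficients lie in $\bigoplus_i H^0(X_0,\omega_{X_0}^{\otimes i})$, using $\omega_{X_k}\cong\pi^*\omega_{X_0}$ and $\pi_*\mathcal O_{X_k}=\mathcal O_{X_0}$.

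At level one, $\phi$-invariance does give this condition. For local type $\mathcal O^{n-1}\oplus\mathfrak m$, the flag forces both residues to preserve the line coming from the $\mathfrak m$-summand, with opposite blocks on the $\mathcal O^{n-1}$-part. The residue theorem $\operatorname{tr}\phi_{x^+}+\operatorname{tr}\phi_{x^-}=0$ then supplies the last eigenvalue, so your interpolation works there.

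For $k\ge 2$ it fails. Take $\mathcal F=\pi_*E$ with $E$ stable on $\widetilde X_0$; this has local type $\mathfrak m^n$. Then $\mathcal Hom(\mathcal F,\mathcal F\otimes\omega_{X_0})=\pi_*(\End E\otimes\Omega^1_{\widetilde X_0}(x^++x^-))$. As in the proof of Lemma~\ref{H2:Vect}, its global sections realise every residue pair with $\operatorname{tr}\phi_{x^+}+\operatorname{tr}\phi_{x^-}=0$. For $n=2$ you may therefore take residue spectra $\{1,2\}$ at $x^+$ and $\{-3,0\}$ at $x^-$. No Gieseker--Higgs bundle pushes forward to this pair. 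On $X_1$ with $\mathcal E|_R\cong\mathcal O(1)^{\oplus 2}$ the constraint is $\phi_{x^-}=-A\phi_{x^+}A^{-1}$, and on $X_2$ it is the spectral condition above.

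Your fallback fails at the same place. Whatever one says about closedness of the image, the locally free locus is not dense. Under the characteristic polynomial its closure maps into the closed subspace $\bigoplus_i H^0(X_0,\omega_{X_0}^{\otimes i})$ of $\bigoplus_i H^0(\widetilde X_0,\Omega^1_{\widetilde X_0}(x^++x^-)^{\otimes i})$, and the pair above lies outside it. Since that locus is open, $\mathrm{TFH}$ is in fact reducible for $n\ge 2$. So the statement can only hold for $\mathrm{GHB}^1\to\mathrm{TFH}^1$, that is, local types $\mathcal O^n$ and $\mathcal O^{n-1}\oplus\mathfrak m$, which is all the later sections use.

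Do not try to fill Step 4 with the paper's shortcut either. The paper avoids constructing $\Phi$ by asserting that $\pi_*(\End\mathcal E\otimes\pi^*\omega_{X_0})\to\mathcal Hom(\pi_*\mathcal E,\pi_*\mathcal E\otimes\omega_{X_0})$ is an isomorphism, so that Higgs fields lift uniquely. Your constancy observation shows this map is only injective. Already at level one its cokernel is a one-dimensional skyscraper at $p$, killed on global sections only by the trace residue theorem. For $k\ge 2$ the cokernel is larger and survives.

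There is also a smaller slip in Step 1. For local type $\mathcal O^{n-k}\oplus\mathfrak m^k$, the image of $\mathcal F$ in $E_D$ has dimension $n+k$, not $n$. This is harmless for Step 3, where gluing $\pi^*\mathcal F/\mathrm{torsion}$ to the chain is correct, but the object in Steps 1--2 is not yet a GPH. To get one, pass to an elementary modification $E\subset E'\subset E(x^++x^-)$ along subspaces of $F(E)\cap E_{x^\pm}$ of total dimension $k$. Choose these invariant under the residues so that $\phi$ extends, for instance all of $F(E)\cap E_{x^+}$.
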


\begin{proof}
The commutativity of the square is obvious from the definitions of the maps. The horizontal arrows are given by pushforward $\pi_*$ along the contraction morphism $\pi\colon \mathfrak{X}\to X_0\times S$, and the vertical arrows are the forgetful functors forgetting the Higgs field.

First of all, it is well known that the bottom horizontal map is surjective \cite[Proposition 10.1.]{Ka}. We will now show that, for any object $(\mathcal{F},\psi)\in\mathrm{TFH}$ with underlying torsion-free sheaf $\mathcal{F}$, if $\mathcal{E}$ is a Gieseker vector bundle on a Gieseker curve $X_k$ such that $\pi_*\mathcal{E}\cong\mathcal{F}$, the induced map on Hom-spaces
\[
\Hom(\mathcal{E},\mathcal{E}\otimes\pi^*\omega_{X_0})\footnote{Notice that $\pi^*\omega_{X_0}\cong \omega_{X_k}$}\ \longrightarrow\ 
\Hom(\pi_*\mathcal{E},\pi_*\mathcal{E}\otimes\omega_{X_0})
\]
is an isomorphism.

\textbf{Injectivity.} Suppose $\pi_*\phi=0$ for a Higgs field $\phi\colon\mathcal{E}\to\mathcal{E}\otimes\pi^*\omega_{X_0}$. Then the restriction of $\phi$ to the main component $\widetilde{X}_0$ vanishes. Since $\mathcal{E}$ is a Gieseker bundle, this forces $\phi=0$ on the entire $\mathfrak{X}$. Hence, the map is injective.

\textbf{Surjectivity.} First, we prove that $R^1\pi_*(\End(\mathcal{E})\otimes\pi^*\omega_{X_0})=0$. Let $R\subset X_k$ denote the exceptional rational tree attached to the node. By the construction of Gieseker bundles, the restriction $\mathcal E|_R$ is a vector bundle on the tree of $\mathbb{P}^1$'s such that on each irreducible component $R_i\cong\mathbb{P}^1$ of $R$, the restriction $$\mathcal End \mathcal E\otimes \pi^*\omega_{X_0}|_{R_i}$$ is a direct sum of the form $\mathcal O(-1)^{\oplus a}\oplus \mathcal O^{\oplus b}\oplus \mathcal O(1)^{\oplus c}$. 

Thus $H^1(R_i,(\End(\mathcal E)\otimes \pi^*\omega_{X_0})|_{R_i})=0$ for each component. Since $R$ is a tree, the cohomology on the whole tree vanishes in degree $1$ by successive applications of the Mayer--Vietoris sequence. Therefore it follows that $R^1\pi_*(\End(\mathcal{E})\otimes\pi^*\omega_{X_0})=0$.

Consequently,
\[
H^0(X_k,\End(\mathcal{E})\otimes\pi^*\omega_{X_0})\ \cong\ H^0(X_0,\pi_*(\End(\mathcal{E})\otimes\pi^*\omega_{X_0})).
\]

Next, the natural morphism of sheaves

\[
\pi_*(\End(\mathcal{E})\otimes\pi^*\omega_{X_0})\ \longrightarrow\ 
\mathcal{H}om(\pi_*\mathcal{E},\pi_*\mathcal{E}\otimes\omega_{X_0})
\]

is an isomorphism. This is verified locally near the node: let $U\subset X_0$ be a small affine neighbourhood of the node, with preimage $V\cup R$ where $V\subset\widetilde{X}_0$ is affine. Then on $U$, the sheaf $\mathcal{H}om(\pi_*\mathcal{E},\pi_*\mathcal{E}\otimes\omega_{X_0/S})$ fits into a short exact sequence
\[
0\ \to\ \mathcal{H}om(\mathcal{F},\mathcal{F}\otimes\omega_{X_0})(U)\ \to\ 
\Hom(E|_V,E|_V\otimes\omega_{\widetilde{X}}|_V)\ \oplus\ 
\Hom(E|_R,E|_R\otimes\omega_{X_k}|_{R})
\ \to\ T_1\oplus T_2\ \to\ 0,
\]
where $T_1,T_2$ are torsion sheaves supported at the attachment points that encode the gluing conditions. This sequence shows that the sections of the pushforward sheaf $\pi_*(\End(\mathcal{E})\otimes\pi^*\omega_{X_0})$ over $U$ are precisely the compatible local sections, hence the natural map is an isomorphism.

Combining these, we obtain
\[
H^0(X_k,\End(\mathcal{E})\otimes\pi^*\omega_{X_0})\ \cong\ 
H^0(X_0,\mathcal{H}om(\mathcal{F},\mathcal{F}\otimes\omega_{X_0})),
\]
which proves surjectivity. 

Now consider a torsion-free Hitchin pair $(\mathcal F, \phi: \mathcal F\to \mathcal F\otimes \omega_{X_0})$ on the nodal curve $X_0$. Choose any Gieseker vector bundle $\mathcal E$ on a suitable Gieseker curve $X_k$ such that $\pi_*\mathcal E\cong \mathcal F$. We have established that there is an isomorphism between the spaces of Higgs fields on $\mathcal E$ and $\mathcal F$:

\[
H^0(X_k,\End(\mathcal{E})\otimes\pi^*\omega_{X_k})\ \cong\ 
H^0(X_0,\mathcal{H}om(\mathcal{F},\mathcal{F}\otimes\omega_{X_0})),
\]

Now the given Higgs field $\phi: \mathcal F\to \mathcal F\otimes \omega_{X_0}$ is an element of the right hand side $\phi\in H^0(X_0,\mathcal{H}om(\mathcal{F},\mathcal{F}\otimes\omega_{X_0}))$. Via the isomorphism, we get a Higgs field $\Phi\in H^0(X_k,\End(\mathcal{E})\otimes\pi^*\omega_{X_k})$ such that $\pi_*\Phi=\phi$. Therefore, the map $\mathrm{GHB}\to \mathrm{TFH}$ is surjective.  

Since $\mathrm{GHB}$ is irreducible \cite[Theorem A.12]{BDP} and the functor $\mathrm{GHB}\to\mathrm{TFH}$ is a surjective map, hence $\mathrm{TFH}$ is irreducible.
\end{proof}

\smallskip

\begin{proposition}\label{fullcartesian1}\label{irr}
The following square of functors is commutative:
\begin{equation}\label{sqfull}
\begin{tikzcd}
\mathrm{GHBD} \arrow[r]\arrow[d] & \mathrm{GPH}\arrow[d] \\
\mathrm{GVBD} \arrow[r] & \mathrm{GPB}
\end{tikzcd}
\end{equation}
Moreover, the map $\mathrm{GHBD}\to \mathrm{GPH}$ is surjective and therefore the moduli stack \(\mathrm{GPH}\) is irreducible because \(\mathrm{GHBD}\) is irreducible.
\end{proposition}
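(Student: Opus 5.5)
The plan follows the pattern of Proposition~\ref{fullcartesian}, with the Gieseker--Higgs stack $\mathrm{GHB}$ replaced by its decorated version $\mathrm{GHBD}$ and $\mathrm{TFH}$ replaced by $\mathrm{GPH}$. Here I take $\mathrm{GHBD}$ to consist of Gieseker--Higgs bundles on a modification $X_k$ together with a trivialization datum at the ends of the exceptional chain, equivalently the $\widetilde{\mathrm{GHB}}$ mentioned at the start of the section. Likewise $\mathrm{GVBD}$ is the analogous object without Higgs field.

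\textbf{The functors.} The top arrow sends $(\mathfrak X\to X_0\times S,\mathcal E,\Phi)$ to the triple $(E,\phi,F(E))$. Here $E=\mathcal E|_{\widetilde X_0}$, where $\widetilde X_0$ is the strict transform, and $\phi=\Phi|_{\widetilde X_0}$, using $\omega_{X_k}|_{\widetilde X_0}\cong\Omega^1_{\widetilde X_0}(x^++x^-)$ (cf.\ Remark~\ref{rem:pullback-dualizing}). The subspace $F(E)\subset E_D$ is the image of $H^0(R,\mathcal E|_R)\to E_{x^+}\oplus E_{x^-}$, which is $n$-dimensional by the Gieseker condition. The bottom arrow is the same construction without $\Phi$. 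Both vertical arrows forget the Higgs field, so commutativity is immediate.

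\textbf{Surjectivity and irreducibility.} The bottom arrow $\mathrm{GVBD}\to\mathrm{GPB}$ is surjective, as in \cite{Ka}; this is the GPB analogue of \cite[Proposition 10.1]{Ka}, and every GPB arises from a Gieseker bundle on some $X_k$. So, given a GPH $(E,\phi,F(E))$, I choose a Gieseker bundle $\mathcal E$ on $X_k$ realizing the underlying GPB $(E,F(E))$. It remains to extend $\phi$ to a Higgs field $\Phi$ on $\mathcal E$. A section of $\End(\mathcal E)\otimes\omega_{X_k}$ is a pair $(\phi,\Phi_R)$ with $\Phi_R$ a section over the chain $R$, matching $\phi$ at $x^\pm$ with the residue sign convention of Theorem~\ref{TripDef}. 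On $R$ we have $\omega_{X_k}|_{R_i}\cong\mathcal O_{R_i}$, and by the Gieseker splitting type $\mathcal E|_{R_i}\cong\mathcal O^{a}\oplus\mathcal O(1)^{b}$. I then argue that the space of endomorphisms of $\mathcal E|_R$ surjects onto the pairs of fibre endomorphisms at the two ends that preserve the image $F(E)$ of global sections. This step uses $H^1$-vanishing on $R$, exactly as in the proof of Proposition~\ref{fullcartesian}.

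\textbf{Main obstacle.} The hard step is this matching: one must show that every $\phi$ with $\phi_D(F)\subset F$ extends. The cleanest route is via sheaves: $\pi_*\mathcal E$ is the torsion-free sheaf $\mathcal F$ attached to $(E,F(E))$. By the isomorphism $H^0(X_k,\End\mathcal E\otimes\omega)\cong H^0(X_0,\mathcal Hom(\mathcal F,\mathcal F\otimes\omega_{X_0}))$ from Proposition~\ref{fullcartesian}, it then suffices to check that the induced $\phi_{\mathcal F}$ pulls back to the given $\phi$ on $E$ and not merely on $\mathcal F$. That requires comparing Higgs fields on $E$ preserving $F$ with Higgs fields on $\mathcal F$, which is delicate when $\operatorname{rank}A<n$ because $\mathcal F$ then loses information at the node. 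I would first reduce, via Remark~\ref{Rem2}, to the normal form $F=\Gamma_A$ or its degenerations, and verify extension explicitly on each $R_i$ by block matrices.

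\textbf{Conclusion.} Once surjectivity is established, irreducibility of $\mathrm{GPH}$ follows because the image of an irreducible stack is irreducible. For irreducibility of $\mathrm{GHBD}$ itself, I would use \cite[Theorem A.12]{BDP} together with the fact that $\mathrm{GHBD}\to\mathrm{GHB}$ is a torsor under a connected group.
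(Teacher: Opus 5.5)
\textbf{Gap.} Your overall skeleton matches the paper: the vertical arrows forget the Higgs field, surjectivity comes from lifting the Higgs field via Proposition~\ref{hello}, and irreducibility follows because $\mathrm{GPH}$ is the image of an irreducible stack. But the horizontal arrows you write down are not the right ones, and this breaks the argument.

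With $E=\mathcal E|_{\widetilde X_0}$ and $F(E)=\operatorname{im}\bigl(H^0(R,\mathcal E|_R)\to E_D\bigr)$, the flag is not $n$-dimensional. Write $\mathcal E|_{R_i}\cong\mathcal O^{n-b_i}\oplus\mathcal O(1)^{b_i}$ with $b_i\ge 1$. Then $H^1(R,\mathcal E|_R)=0$, and the evaluation map is injective, since a section vanishing at both ends would be torsion in $\pi_*\mathcal E$. Hence $\dim F(E)=\chi(\mathcal E|_R)=n+\sum_i b_i$. Already for $n=k=1$ the map $H^0(\mathcal O(1))\to\mathbb C^2$ is an isomorphism, so $F(E)=E_D$. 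Likewise $\deg \mathcal E|_{\widetilde X_0}=d-\sum_i b_i$. So your functor leaves $\mathrm{GPB}$ as soon as an exceptional component is present. Moreover, no GPH with $\operatorname{rank}A<n$ is of the form $(\mathcal E|_{\widetilde X_0},\dots)$, so surjectivity fails exactly where it matters.

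The missing idea is what the ``D'' records. An object of $\mathrm{GVBD}$ (resp.\ $\mathrm{GHBD}$) is a Gieseker (resp.\ Gieseker--Higgs) bundle together with a \emph{marked node} $s$ of $X_k$. Thus $\mathrm{GHBD}$ is the normalization of $\mathrm{GHB}$, finite over it, not a torsor of trivializations. One separates $X_k$ at $s$ via $q_s\colon\widetilde X_k\to X_k$ and contracts the two tails by $h\colon\widetilde X_k\to\widetilde X_0$. Then one puts $E=h_*\bigl(\widetilde{\mathcal E}(-s^+-s^-)\bigr)(x^++x^-)$ with $\widetilde{\mathcal E}=q_s^*\mathcal E$, and lets $F(E)$ be the image in $E_D$ of the graph of the gluing isomorphism $\widetilde{\mathcal E}_{s^+}\to\widetilde{\mathcal E}_{s^-}$. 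The choice of $s$ is what selects the elementary modification needed to obtain an $n$-dimensional flag in degree $d$. Irreducibility of $\mathrm{GHBD}$ then follows because it is the normalization of the irreducible $\mathrm{GHB}$, not from a connected-torsor argument.

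Conversely, the step you single out as the main obstacle is easier than you fear and needs no block-matrix case analysis. The paper combines Proposition~\ref{hello}, $H^0(X_k,\End\mathcal E\otimes\omega_{X_k})\cong\mathrm{Hom}(\mathcal F,\mathcal F\otimes\omega_{X_0})$, with an identification of the right-hand side with $\mathrm{Hom}_{F(E)}\bigl(E,E\otimes\Omega^1_{\widetilde X_0}(x^++x^-)\bigr)$. For surjectivity, however, only the injection $\phi\mapsto\phi_{\mathcal F}$ is actually used. Suppose $\Phi$ lifts $\phi_{\mathcal F}$. The Higgs field that $\Phi$ induces on $E$ is regular by construction of $\mathrm{GHBD}\to\mathrm{GPH}$. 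It agrees with the given $\phi$ on $\widetilde X_0\setminus\{x^+,x^-\}$, where $\mathcal F$ and $\pi_*E$ coincide. Since $\End E\otimes\Omega^1_{\widetilde X_0}(x^++x^-)$ is locally free, the two are equal. So the information $\mathcal F$ loses at the node is irrelevant for surjectivity. What your proposal lacks is the correct functor to lift along.
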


\begin{proof}
    Recall from \cite[\S 5. Definition of Gieseker Hitchin pair data]{D} that an element of $\mathrm{GVBD}$ is a Gieseker vector bundle $(X_k, \mathcal E)$ with a choice of a marked node $s$ of the Gieseker curve $X_k$. The map $h_{\bullet}: \mathrm{GVBD}\to \mathrm{GPB}$ is given by $h_{\bullet}(X_k, \mathcal E, s)\mapsto (E:=h_*(\widetilde{\mathcal E}(-s^+-s^-))(x^++x^-), F(E)\subset E_{x^+}\oplus E_{x^-})$, where $\widetilde{\mathcal E}:=q^*_s(\mathcal E)$ and $q_s: \widetilde{X_k}\to X_k$ is the seminormalisation of $X_k$ at the point $s$, and $F(E): \Gamma_{A}\hookrightarrow \widetilde{\mathcal E}_{s^+}\oplus \widetilde{\mathcal E}_{s^-}\hookrightarrow E_{x^+}\oplus E_{x^-}$ is the composition of the two map. Here $A: \widetilde{\mathcal E}_{s^+}\to \widetilde{\mathcal E}_{s^-}$ is the descent datum/ Gluing isomorphism for the bundle $\mathcal E$ and $\Gamma_A$ denotes the graph of the isomorphism $A$. Similarly, an element of $\mathrm{GHBD}$ is a $4$-tuple $(X_k, \mathcal E, s, \phi)$, where $\phi$ is a Higgs field on $\mathcal E$. Notice that in this case the flag $F(E)$ will be respected/ preserved by the naturally induced Higgs field $h_{\bullet}$ on $E$ (see \cite[\S 7.1 The GPH corresponding to a given Gieseker Hitchin pair data]{D}). 
    
      Now from the previous proposition \ref{hello}, we see that the space of the Higgs field $H^0(X_k, \End \mathcal E\otimes \omega_{X_k})=H^0(X_0, \mathcal Hom(\mathcal F, \mathcal F\otimes \omega_{X_0}))=\mathrm Hom(\mathcal F, \mathcal F\otimes \omega_{X_0})$, the space of the Higgs fields on the induced torsion free sheaf $\mathcal F$ on the original nodal curve $X_0$. But $\mathrm Hom(\mathcal F, \mathcal F\otimes \omega_{X_0})=\mathrm Hom_{F(E)}(E, E\otimes \Omega^1_{\widetilde{X_0}}(x^++x^-))$, the space of Higgs fields on a GPB $(E, F(E))$ which also induces the same torsion-free sheaf $\mathcal F$. Therefore, we have $H^0(X_k, \End \mathcal E\otimes \omega_{X_k})=\mathrm Hom_{F(E)}(E, E\otimes \Omega^1_{\widetilde{X_0}}(x^++x^-))$. Notice that the left hand side is precisely the fibre ($\mathbb C$-valued) of the vertical arrow on the left and the right hand side is precisely the fibre ($\mathbb C$-valued) of the vertical map on the right. 
      
      Now take any GPH $(E, F(E), \tilde \phi: E\to E\otimes\Omega^1_{\widetilde{X_0}}(x^++x^-))$. Since the map $\mathrm{GVBD}\to \mathrm{GPB}$ is surjective, choose any GVBD $(X_k, \mathcal E, s)$ which induces the GPB $(E, F(E))$. Since $H^0(X_k, \End \mathcal E\otimes \omega_{X_k})=\mathrm Hom_{F(E)}(E, E\otimes \Omega^1_{\widetilde{X_0}}(x^++x^-))$ and $\tilde \phi\in \mathrm Hom_{F(E)}(E, E\otimes \Omega^1_{\widetilde{X_0}}(x^++x^-))$, we get a unique $\tilde \Phi\in H^0(X_k, \End \mathcal E\otimes \omega_{X_k})$ which induces the GPH $(E, F(E), \tilde \phi)$ under the map $h_{\bullet}$. Therefore the map $\mathrm{GHBD}\to \mathrm{GPH}$ is surjective. Also note that $\mathrm{GHBD}$ is the normalisation of $\mathrm{GHB}$. Therefore since $\mathrm{GHB}$ is irreducible \cite[Theorem A.12]{BDP}, $\mathrm {GHBD}$ is also irreducible. Since the map $\mathrm{GHBD}\to \mathrm{GPH}$ is surjective and $\mathrm{GHBD}$ is irreducible therefore $\mathrm{GPH}$ is also irreducible. This concludes the proof. 
\end{proof}

\

Bhosle's moduli space of generalized parabolic bundles, and its Hitchin-pair analogue $\mathrm M_{\mathrm{GPH}}$, are constructed using a stability condition that depends on an auxiliary real parameter $\alpha$. We recall the definition here.

\begin{definition}[$\alpha$-semistability of a GPB]\label{def:alpha-GPB}
Let $(E,F(E))$ be a generalized parabolic bundle of rank $n$ and degree $d$ on $\widetilde X_0$, and let $\alpha \in [0,1]$. For a subbundle $E' \subseteq E$ of rank $n'$ and degree $d'$, set
\[
\delta(E') \ \colonequals\ \dim\bigl(F(E) \cap E'_D\bigr), \qquad E'_D \colonequals E'_{x^+}\oplus E'_{x^-} \ \subseteq\ E_D,
\]
and define the \emph{$\alpha$-slope}
\[
\mu_\alpha(E') \ \colonequals\ \frac{d' + \alpha\,\delta(E')}{n'}.
\]
The pair $(E,F(E))$ is \emph{$\alpha$-semistable} (resp.\ \emph{$\alpha$-stable}) if
\[
\mu_\alpha(E') \ \le\ \mu_\alpha(E) \qquad \bigl(\text{resp.}\ \mu_\alpha(E') < \mu_\alpha(E)\bigr)
\]
for every proper subbundle $0 \neq E' \subsetneq E$, where $\mu_\alpha(E) = \dfrac{d+\alpha n}{n}$.
\end{definition}

\begin{definition}[$\alpha$-semistability of a GPH]\label{def:alpha-GPH}
A generalized parabolic Hitchin pair $(E,\phi,F(E))$ is \emph{$\alpha$-semistable} (resp.\ \emph{$\alpha$-stable}) if the inequality of \cref{def:alpha-GPB} holds for every proper $\phi$-invariant subbundle $E' \subsetneq E$, i.e.\ every $E'$ with $\phi(E') \subseteq E' \otimes \Omega^1_{\widetilde X_0}(x^++x^-)$:
\[
\mu_\alpha(E') \ \le\ \mu_\alpha(E) \qquad \bigl(\text{resp.}\ <\bigr).
\]
\end{definition}

\begin{remark}[Special values of $\alpha$]\label{rem:alpha-special}
\begin{enumerate}
\item At $\alpha = 0$, \cref{def:alpha-GPH} reduces to ordinary slope-stability of the underlying Hitchin pair $(E,\phi)$ on $\widetilde X_0$, independently of the flag $F(E)$.
\item At $\alpha = 1$, $\alpha$-stability of $(E,\phi,F(E))$ is equivalent to stability of the associated torsion-free Higgs sheaf $(\mathcal F,\phi_{\mathcal F})$ on $X_0$.
\end{enumerate}
\end{remark}

\begin{remark}[Chamber structure]\label{rem:alpha-walls}
For fixed $(n,d)$ there are only finitely many rational values of $\alpha \in [0,1]$ (\emph{walls}) at which $\alpha$-semistability fails to coincide with $\alpha$-stability, or at which the set of $\alpha$-semistable GPHs changes; on each open interval between consecutive walls (a \emph{chamber}) the moduli space $\mathrm M_{\mathrm{GPH}}$ is independent of $\alpha$. Throughout this paper, ``stable'' GPH means $\alpha$-stable for a fixed generic $\alpha$ in such a chamber; since $\gcd(n,d)=1$, generic $\alpha$-semistability coincides with $\alpha$-stability, so $\mathcal M_{\mathrm{GPH}}$ is a fine moduli space away from wall-crossing behaviour.
\end{remark}

\begin{remark}[Reduction to $\alpha=1$]\label{rem:alpha-one}
Throughout this and the following sections we assume $\alpha = 1$ and $\gcd(n,d)=1$. We point out that if the moduli space is irreducible at $\alpha=1$, then it is irreducible for every $\alpha$: the moduli spaces $\mathcal M_{\mathrm{GPH}}^{\alpha}$ for $\alpha$ ranging over different chambers (\cref{rem:alpha-walls}) are related to one another by standard birational transformations known as \emph{flip-flops}, occurring at each wall; since a flip-flop is a birational modification supported on a proper closed subset, it does not change the number of irreducible components. Hence it suffices to establish irreducibility for the single value $\alpha=1$, which is the case treated in \cref{irr}.

Moreover, since the moduli \emph{stack} $\mathrm M_{\mathrm{GPH}}$ is irreducible (it does not involve any stability parameter) (\cref{irr}), so is its coarse moduli space $\mathcal M_{\mathrm{GPH}}$: the natural map $\mathrm M_{\mathrm{GPH}} \to \mathcal M_{\mathrm{GPH}}$ is surjective and continuous (in the Zariski topology), and the continuous image of an irreducible topological space is irreducible. Thus $\mathcal M_{\mathrm{GPH}}$ is irreducible for $\alpha=1$, and by the flip-flop remark above, for every $\alpha$.
\end{remark}

\begin{remark}[Convention]\label{rem:stable-convention}
\textbf{From this point onward, unless stated otherwise, by a stable generalized parabolic Hitchin pair we shall always mean an $\alpha$-stable one with $\alpha = 1$ in the sense of \cref{def:alpha-GPH}, and similarly for semistability. We drop the prefix ``$\alpha$-'' and the subscript accordingly.}
\end{remark} 

We will now describe a codimension $1$ locus inside the coarse moduli space $\mathcal M_{\mathrm{GPH}}:$  

\begin{corollary}\label{codim3}
Let \(\mathcal{M}^{1}_{GPH} \subset \mathcal{M}_{\mathrm{GPH}}\) be the locus consisting of points where the generalized parabolic structure \(A: E_{x^+} \to E_{x^-}\) satisfies \(\operatorname{rank} A \geq n-1\). Then
\[
\operatorname{codim}_{\mathcal{M}_{\mathrm{GPH}}} \bigl( \mathcal{M}_{\mathrm{GPH}} \setminus \mathcal{M}^{1}_{GPH} \bigr) \geq 2.
\]
\end{corollary}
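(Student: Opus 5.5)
We plan a dimension count: bound the dimension of the locus where $\operatorname{rank}A\le n-2$ (more precisely, where $F(E)\cap(E_{x^+}\oplus 0)$ and $F(E)\cap(0\oplus E_{x^-})$ together have dimension $\ge 2$; in the graph normal form this means $\dim\ker A\ge 2$). We compare it with $\dim\mathcal M_{\mathrm{GPH}}$, using that $\mathcal M_{\mathrm{GPH}}$ is irreducible (\cref{irr}, \cref{rem:alpha-one}) of the expected dimension $2(n^2(g-1)+1)$. This is the dimension of the moduli of Higgs bundles on a genus $g$ curve, obtained from $\dim\mathcal M_{\mathrm{HP}}$ on $\widetilde X_0$ with poles at $D$, plus $n^2$ for the choice of $F(E)\in \mathrm{Gr}(n,E_D)$, minus $n^2$ for the residue/flag compatibility conditions. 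We would stratify the complement $\mathcal M_{\mathrm{GPH}}\setminus\mathcal M^1_{GPH}$ by $r=\operatorname{rank}A\le n-2$, or more invariantly by the position of $F(E)$ in the Schubert-type stratification of $\mathrm{Gr}(n,E_{x^+}\oplus E_{x^-})$ relative to the two summands. We would then show that each stratum $Z_r$ has codimension $\ge (n-r)^2\ge 4$, in any case $\ge 2$.

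\textbf{Step 1 (flag contribution).} In $\mathrm{Gr}(n,2n)$ the graphs $\Gamma_A$ of maps of rank $r$ form a locally closed subset of codimension $(n-r)^2$, by the standard determinantal count. The non-graph subspaces are handled symmetrically by swapping the roles of $x^\pm$, or by noting that they lie in the closure of the $\operatorname{rank}\le n-1$ graph locus with the same codimension count. Hence the flag data lose at least $(n-r)^2\ge 4$ parameters.

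\textbf{Step 2 (Higgs contribution).} Fix $E$ and $F(E)$. The space of compatible Higgs fields is the kernel of
\[
H^0(\End E\otimes\Omega^1_{\widetilde X_0}(x^++x^-))\longrightarrow \Hom(F,E_D/F),\qquad \phi\mapsto\overline{\phi\circ F}.
\]
By the computation in \cref{rem:flag-not-surjective}, which generalizes to rank $r$, the image of the residue part into $\Hom(F,E_D/F)$ has codimension $(n-r)^2$ (the $\ker A\to\operatorname{coker}A$ block is untouched). So on $Z_r$ the number of compatibility conditions drops by at most $(n-r)^2$, and the fibre dimension of Higgs fields grows by at most $(n-r)^2$. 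This exactly offsets Step 1. The main obstacle is this naive cancellation: a finer count is needed.

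\textbf{Step 3 (the fix).} We would parametrize $Z_r$ instead via the $F(E)$-preserving condition on the induced torsion-free Higgs sheaf. A point of $Z_r$ with $\operatorname{rank}A=r$ gives a sheaf $\mathcal F$ whose local type at $p$ is $\mathcal O^{r}\oplus\mathfrak m^{n-r}$. Through the surjection $\mathrm{GHBD}\to\mathrm{GPH}$ of \cref{irr}, such points come from Gieseker data on $X_k$ with $k\ge n-r\ge 2$ exceptional components, modulo the $\mathbb G_m^{k}$-type automorphisms. The Gieseker strata with $k$ inserted rational curves have codimension $k$ in $\mathrm{GHB}$ (normal crossing boundary), and the map to $\mathrm{GPH}$ does not increase dimension. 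Hence $\operatorname{codim} Z_r\ge n-r\ge 2$. We expect the delicate point to be justifying that $\mathrm{GHBD}\to\mathrm{GPH}$ is dimension-nonincreasing on these strata, i.e.\ has fibres of dimension at least the relative automorphism dimension. We would try to prove this using the bijection on Higgs fields from \cref{hello}, which reduces the problem to the corresponding statement for $\mathrm{GVBD}\to\mathrm{GPB}$, which is known from Kausz and Bhosle's desingularization.
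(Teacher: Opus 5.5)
Your Step 3 is essentially the paper's proof. The paper uses the proper, surjective, birational morphism $h_\bullet\colon\widetilde{\mathcal M}_{\mathrm{GHB}}\to\mathcal M_{\mathrm{GPH}}$ of irreducible varieties. It notes that the Gieseker--Higgs data over curves with at least two exceptional components form a closed subset of codimension $\geq 2$ whose image contains $\mathcal M_{\mathrm{GPH}}\setminus\mathcal M^1_{GPH}$, and concludes because images do not gain dimension.

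The paper works on coarse moduli schemes of stable objects rather than on stacks, and there the ``delicate point'' you anticipate is automatic: $\dim\overline{f(Z)}\leq\dim Z$ for any morphism, and source and target have the same dimension by birationality. So no reduction to $\mathrm{GVBD}\to\mathrm{GPB}$ is needed. You also only need that points with $\operatorname{rank}A\leq n-2$ have no preimage over $X_0$ or $X_1$, not the stronger $k\geq n-r$.

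You should, however, drop Steps 1--2 and the announced bound $\operatorname{codim}\geq (n-r)^2\geq 4$. Flags $F$ meeting both $E_{x^+}\oplus 0$ and $0\oplus E_{x^-}$ in a line are graphs over neither summand and give local type $\mathcal O^{n-2}\oplus\mathfrak m^{2}$. They form a locus of codimension only $2$ in $\mathrm{Gr}(n,E_D)$, namely the intersection of two Schubert divisors, so the ``symmetric'' treatment of non-graph flags in Step 1 is incorrect.
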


\begin{proof}
Let \(\widetilde{X} = \widetilde{\mathcal{M}}_{\mathrm{GHB}}\) be the normalization of the moduli space of stable Gieseker--Higgs bundles (which is irreducible by \cite[Theorem A.12]{BDP}), $X:=\mathcal M_{GHB}$ the moduli space of stable Gieseker-Higgs bundles and let \(Y = \mathcal{M}_{\mathrm{GPH}}\) be the moduli space of stable $\Omega^1_{\widetilde{X_0}}(x^++x^-)$-twisted generalized parabolic Hitchin pairs. 

There is a natural proper morphism 
\[
f=h_{\bullet} \colon \widetilde{X} \longrightarrow Y
\]
(see \cite{D}). This morphism is surjective onto \(Y\) (see Proposition \ref{irr}) and birational, since it is an isomorphism over the open locus where \(\operatorname{rank} A = n\) and both $\widetilde X$ and $Y$ are irreducible.

Let \(\widetilde{X^1} \subset \widetilde{X}\) be the open subset consisting of Gieseker--Higgs bundle data only on the modifications $X_0$ and $X_1$ of the nodal curve (see def. \ref{mod11}). Notice that the Gieseker-Higgs bundle data in $\widetilde{X^1}$ are precisely the ones that correspond to GPHs of the form $(E, \phi, A)$ such that \(\operatorname{rank} A \geq n-1\) (see \cite[Proposition 10.1.]{Ka}). Since the moduli space $\widetilde{X}=\widetilde{\mathcal M_{GHB}}$ is irreducible and $\mathcal M_{GHB}$ is formally smooth over the space of expanded degenerations of the nodal curve, it follows that 
\[
\operatorname{codim}_{\widetilde{X}} (\widetilde{X} \setminus \widetilde{X^1}) \geq 2.
\]

This is due to the fact that the singular locus $Sing(X)=X\setminus X^0$ and $Sing(Sing(X))=X\setminus X^1$ (see \cite[Section 6]{DI}). Notice that $X\setminus X^1$ is the locus consisting of Gieseker-Higgs bundles on the expanded degenerations where the number of $\mathbb P^1$'s are greater than or equal to $2$. 

Let us denote by \(Y^1 := f(\widetilde{X^1}) = \mathcal{M}^{1}_{GPH}\). Then it is easy to see that \(f\) is restricted to a bijective morphism \(\widetilde{X^1} \rightarrow  Y^1\), and that \(Y \setminus Y^1 = f(\widetilde X \setminus \widetilde{X^1})\). 

Since \(f\) is proper and surjective, and \(\operatorname{codim}_{\widetilde X} (\widetilde X \setminus \widetilde{X^1}) \geq 2\), we have
\[
\operatorname{codim}_Y (Y \setminus Y^1) \geq 2.
\]
This completes the proof.
\end{proof}

\section{\textbf{Local complete Intersection Property}}

In this section, we prove that the moduli space of generalized parabolic Hitchin pairs \(\mathcal{M}_{\mathrm{GPH}}\) is a local complete intersection. The strategy is to realize \(\mathcal{M}_{\mathrm{GPH}}\) as the zero locus of a natural section of a vector bundle over a fibre product of well-understood moduli stacks and that it has exactly the expected dimension. We chose to use the language of stacks for convenience. The reader could also use the language of Quot schemes for this purpose which can be thought of as an Atlas for the Artin stacks. 

\begin{theorem}\label{thm:LCI}
The moduli space \(\mathcal{M}_{\mathrm{GPH}}\) is a local complete intersection.
\end{theorem}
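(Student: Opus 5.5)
We realize $\mathrm{M}_{\mathrm{GPH}}$ as the zero locus of a section of a vector bundle on a smooth stack, and then check that this zero locus has the expected dimension. Since $\gcd(n,d)=1$ and stability is an open condition, it is enough to work on the open substack of stable objects and pass to the coarse space at the end. Stable objects have only scalar automorphisms, so the stack is a $\mathbb{G}_m$-gerbe over $\mathcal{M}_{\mathrm{GPH}}$, and the lci property descends.

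\textbf{Step 1 (smooth ambient space).} Let $\mathcal{B}$ be the stack of triples $(E,F(E),\phi)$, where $E$ is a bundle on $\widetilde X_0$, $F(E)\in \mathrm{Gr}(n,E_D)$, and $\phi\in H^0(\End(E)\otimes\Omega^1_{\widetilde X_0}(x^++x^-))$ is arbitrary. The stack of pairs $(E,F(E))$ is a Grassmannian bundle over $\mathrm{M}_{\mathrm{Bun}}$, hence smooth. On the open locus where $E$ is stable, the space of Higgs fields $H^0(\End E\otimes\Omega^1(D))$ has constant dimension $n^2(g-1)-n^2+n^2\cdot 2\cdot\frac12\cdots$, computed by Riemann--Roch. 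This holds because $H^1(\End E\otimes\Omega^1(D))\cong H^0(\End E(-D))^\vee=0$. Hence $\mathcal{B}$ is a vector bundle over a smooth stack and is smooth there.

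The GPH condition $\phi_D(F)\subset F$ is the vanishing of the induced section
\[
\sigma(E,F,\phi)=\overline{\phi_D|_F}\in \Hom(F,E_D/F)
\]
of the rank-$n^2$ bundle $\mathcal{V}=\mathcal{H}om(\mathcal{F},\mathcal{E}_D/\mathcal{F})$. Thus $\mathrm{M}_{\mathrm{GPH}}=Z(\sigma)$ is locally cut out by $n^2$ equations in a smooth stack. This matches the paper's description of the GPH Quot scheme as the vanishing locus of a section of a universal bundle.

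\textbf{Step 2 (expected dimension).} By Krull's theorem, every component of $Z(\sigma)$ has dimension at least $\dim\mathcal{B}-n^2$. To conclude that $Z(\sigma)$ is lci it suffices to show $\dim \mathrm{M}_{\mathrm{GPH}}\le \dim\mathcal{B}-n^2$ at every point. The open locus with $\operatorname{rank}A=n$ is identified with Higgs bundles on $X_0$ and is smooth of dimension $2n^2(g-1)+2$ on the coarse level. By \cref{irr}, $\mathcal{M}_{\mathrm{GPH}}$ is irreducible, so this open set is dense and the whole space has exactly this dimension.

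Step 2 reduces to checking that this number equals $\dim\mathcal{B}-n^2$ (modulo the gerbe). Equivalently, we compare with the complex $\mathcal{C}_{(E_\bullet,\phi)}$ of \cref{thm:def-GPH}. Its Euler characteristic should be
\[
\dim\mathbb{H}^1-\dim\mathbb{H}^0-\dim\mathbb{H}^2=\chi(\pi_*\End E\otimes\omega_{X_0})-\chi(\pi_*\End E),
\]
since the two skyscraper terms cancel. Combining this with $\dim\mathbb{H}^0=\dim\mathbb{H}^2=1$ from \cref{H2:Vect} and \cref{hypecomp} gives tangent dimension $2n^2(g-1)+2$. This matches the local dimension on the locus $\operatorname{rank}A\ge n-1$, so these points are even smooth.

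\textbf{Step 3 (remaining locus).} Points where $E$ is unstable, or where $\operatorname{rank}A\le n-2$, lie in a closed set of codimension $\ge2$ by \cref{codim3}. There the tangent-space argument fails, but we only need the dimension bound from Step 2. The bound holds globally by irreducibility, so $Z(\sigma)$ has codimension exactly $n^2$ everywhere. A zero scheme of a section of a rank-$n^2$ bundle on a smooth space with codimension exactly $n^2$ is lci.

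\textbf{Main obstacle.} The hardest step is making Step 1 work when $E$ is not stable. There $h^0(\End E\otimes\Omega^1(D))$ jumps, so the space of all Higgs fields is not a vector bundle over $\mathrm{M}_{\mathrm{Bun}}$. My first attempt would replace $\mathcal{B}$ by the total space of the complex $R\Gamma(\End E\otimes\Omega^1(D))$, using a two-term resolution $K^0\to K^1$ by vector bundles on an atlas. This presents the Higgs stack itself as the zero locus of a section of $K^1$ in the total space of $K^0$, which is smooth. The GPH condition then adds $n^2$ more equations. The dimension count must then be redone with $\mathrm{rank}K^0-\mathrm{rank}K^1=\chi$, and irreducibility of $\mathrm{M}_{\mathrm{GPH}}$ (\cref{irr}) again supplies the required upper bound.
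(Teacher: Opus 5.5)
Your strategy is the paper's: cut out $\mathrm{M}_{\mathrm{GPH}}$ by the rank-$n^2$ section $\sigma$ on $\mathrm{M}_{\mathrm{HP}}\times_{\mathrm{M}_{\mathrm{Bun}}}\mathrm{M}_{\mathrm{GPB}}$, then count dimensions using irreducibility. It breaks at the one step you did not check: the equality $\dim \mathrm{M}_{\mathrm{GPH}}=\dim\mathcal{B}-n^2$ is false. Since $\widetilde X_0$ has genus $g-1$, Riemann--Roch gives $h^0(\End E\otimes\Omega^1_{\widetilde X_0}(x^++x^-))=n^2g$ for stable $E$. As a stack, then, $\dim\mathcal{B}=n^2(g-2)+n^2+n^2g=2n^2(g-1)+n^2$, so $\dim\mathcal{B}-n^2=2n^2(g-1)$. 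This is exactly the virtual dimension $-\chi(\mathcal{C}_{(E_\bullet,\phi)})$ from your own Euler characteristic computation. The rank-$n$ locus, however, has stack dimension $2n^2(g-1)+1$ (coarse $2n^2(g-1)+2$). The excess of one is precisely the generically nonzero $\mathbb{H}^2$. Concretely, wherever $A$ is invertible, $\operatorname{tr}\bigl(A^{-1}(\phi_{x^-}A+A\phi_{x^+})\bigr)=\operatorname{tr}\phi_{x^-}+\operatorname{tr}\phi_{x^+}=0$ for every $\phi$, by the residue theorem. So only $n^2-1$ of your equations are independent and $Z(\sigma)$ has codimension $n^2-1$ there. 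Hence $\sigma$ is not a regular section: the upper bound you need in Step 2 fails, and the conclusion of Step 3 (codimension exactly $n^2$) is wrong. Your comparison of $\dim\mathbb{H}^1$ with the actual dimension only gives smoothness at rank $\ge n-1$; it says nothing about regularity of $\sigma$. The paper's proof does not fill this in, because it has the same off-by-one. It takes the ambient to have dimension $2n^2(g-1)+2+n^2$, but the $\Omega^1_{\widetilde X_0}(x^++x^-)$-twisted Hitchin moduli on the genus-$(g-1)$ curve has dimension $2n^2(g-1)+1$.

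This is not a bookkeeping slip, and the real obstacle is not unstable $E$. Take $E$ stable and $F=E_{x^+}\oplus 0$ (that is, $A=0$). The compatibility condition is then vacuous, so this locus $W$ has coarse dimension $n^2(g-2)+1+n^2g=2n^2(g-1)+1$. At a point of $W$ the node differential is $d^1(s,f)=-(\phi_{x^-}f+f\phi_{x^+})$, a Sylvester operator. For $n\ge 2$ the residues can be chosen, subject only to $\operatorname{tr}\phi_{x^+}+\operatorname{tr}\phi_{x^-}=0$, so that $\phi_{x^-}$ and $-\phi_{x^+}$ have disjoint spectra; then this operator is bijective. At such a point $\mathbb{H}^2=0$, so the moduli space is smooth of dimension $2n^2(g-1)+1$ there. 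Therefore $\overline{W}$ is an irreducible component distinct from the closure of the rank-$n$ locus, which also contradicts \cref{codim3}. The two components meet. Identify $E_{x^+}\cong E_{x^-}$ and take $A=tI$ with $\phi_{x^-}=-\phi_{x^+}$: compatibility holds for every $t$, and letting $t\to 0$ lands in $W$. A space that is not equidimensional at a point is not Cohen--Macaulay there, so it cannot be lci there. Thus the irreducibility input (Proposition~\ref{irr}) that you and the paper both rely on is not available. For $n\ge 2$ no dimension-count argument of this shape can prove the statement as posed.
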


\begin{proof}
Consider the following fibre product diagram of moduli stacks:
\[
\begin{tikzcd}
\mathrm{M}_{\mathrm{GPH}} \arrow{r} & 
\mathrm{M}_{\mathrm{HP}}\times_{\mathrm{M}_{\mathrm{Bun}}} \mathrm{M}_{\mathrm{GPB}}
\arrow{r} \arrow{d} & 
\mathrm{M}_{\mathrm{GPB}} \arrow{d} \\
& \mathrm{M}_{\mathrm{HP}} \arrow{r} & \mathrm{M}_{\mathrm{Bun}}
\end{tikzcd}
\]
where 
\begin{itemize}
\item \(\mathrm M_{\mathrm{Bun}}\) is the moduli stack of vector bundles of rank \(n\) and degree \(d\) on \(\widetilde{X}_0\),
\item \(\mathrm M_{\mathrm{HP}}\) is the moduli stack of Hitchin pairs \((E,\phi)\) of rank \(n\) and degree \(d\) on \(\widetilde{X}_0\),
\item \(\mathrm M_{\mathrm{GPB}}\) is the moduli stack of generalised parabolic bundles of rank \(n\) and degree \(d\) on \(\widetilde{X}_0\) (with parabolic structure at \(\{x^+,x^-\}\)),
\item \(\mathrm M_{\mathrm{HP}} \times_{\mathrm M_{\mathrm{Bun}}} \mathrm M_{\mathrm{GPB}}\) parametrises triples \(((E,\phi), (E,F(E)))\) with the same underlying vector bundle \(E\).
\end{itemize}

There exists a universal vector bundle \(\mathcal{E}\) on \(\widetilde{X}_0 \times \mathrm M_{\mathrm{HP}}\) together with a universal Higgs field
\[
\phi \colon \mathcal{E} \to \mathcal{E} \otimes \Omega_{\widetilde{X}_0}(x^+ + x^-).
\]
There is also a universal flag
\[
F(\mathcal{E}) \subset \mathcal{E}_{x^+} \oplus \mathcal{E}_{x^-}
\]
on \(\widetilde{X}_0 \times \mathrm M_{\mathrm{GPB}}\).

On the fibre product \(\mathrm M_{\mathrm{HP}} \times_{\mathrm M_{\mathrm{Bun}} \mathrm M_{\mathrm{GPB}}}\), we obtain a natural morphism of bundles
\[
F(\mathcal{E}) \;\longrightarrow\; \mathcal{E}_{x^+} \oplus \mathcal{E}_{x^-} \;\longrightarrow\;
\bigl(\mathcal{E}_{x^+} \oplus \mathcal{E}_{x^-}\bigr) \otimes \Omega_{\widetilde{X}_0}(x^++x^-)
\;\longrightarrow\;
\frac{\mathcal{E}_{x^+} \oplus \mathcal{E}_{x^-}}{F(\mathcal{E})} \otimes \Omega_{\widetilde{X}_0}(x^++x^-).
\]
The vanishing locus of this global section \(\sigma\) (i.e., the locus where the Higgs field preserves the generalised parabolic structure) is precisely the moduli stack \(\mathrm{M}_{\mathrm{GPH}}\).

The ambient stack \(\mathrm M_{\mathrm{HP}} \times_{\mathrm M_{\mathrm{Bun}} \mathrm M_{\mathrm{GPB}}}\) is a local complete intersection: the moduli stack of Hitchin pairs \(\mathrm M_{\mathrm{HP}}\) is known to be LCI (see \cite{BDP,Nit}), \(\mathrm M_{\mathrm{GPB}}\) is smooth, and the fibre product over \(\mathrm M_{\mathrm{Bun}}\) preserves the LCI property since the forgetful morphism \(\mathrm M_{\mathrm{GPB}} \to \mathrm M_{\mathrm{Bun}}\) is smooth.

Now from Proposition \ref{irr} we know that \(\mathcal{M}_{\mathrm{GPH}}\) is irreducible. The expected dimension of \(\mathcal{M}_{\mathrm{GPH}}\) is
\[
\dim \mathcal{M}_{\mathrm{GPH}} = 2n^2(g-1) + 2.
\]
The ambient fibre product has dimension
\[
\dim(\mathrm M_{\mathrm{HP}} \times_{\mathrm{M}_{\mathrm{Bun}}} \mathrm{M}_{GPB}) = 2n^2(g-1) + 2 + n^2 = 2n^2(g-1) + n^2 + 2.
\]

Thus \(\mathrm{M}_{\mathrm{GPH}}\) is cut by a section of a vector bundle of rank \(n^2\) on an LCI ambient stack, and has the expected dimension. Therefore \(\mathrm{M}_{\mathrm{GPH}}\) is LCI.

Finally, since \(\gcd(n,d)=1\), the stabilizers at stable points are uniformly $\mathbb C^*$, the map $\mathrm M^{st}_{\mathrm{GPH}}\to \mathcal M_{\mathrm{GPH}}$ is smooth. Since $\mathrm M^{st}_{\mathrm{GPH}}$ is an LCI, the coarse moduli space \(\mathcal{M}_{\mathrm{GPH}}\) is also an LCI. This completes the proof. 
\end{proof}

\section{\textbf{Normality and Applications}}

In this section we apply the deformation theory developed earlier to study the geometry of the coarse moduli space \(\mathcal{M}_{\mathrm{GPH}}\) of stable generalized parabolic Hitchin pairs in the case where $g.c.d(n,d)=1$. 

First of all, notice that there exists a proper morphism 
\[
h_{\bullet} \colon \mathcal{M}_{\mathrm{GHB}} \longrightarrow \mathcal{M}_{\mathrm{GPH}}
\]
from the moduli space of Gieseker--Higgs bundles to the moduli space of generalized parabolic Hitchin pairs \cite{D}. The map \(h_{\bullet}\) restricts to an isomorphism over the open locus in \(\mathcal{M}_{\mathrm{GPH}}\) where \(\operatorname{rank} A = n\).

\begin{theorem}\label{NT}
The variety \(\mathcal{M}_{\mathrm{GPH}}\) is a local complete intersection that is smooth in codimension \(1\), and therefore is a normal variety.
\end{theorem}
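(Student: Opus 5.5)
The plan is to combine the three ingredients already established and then invoke Serre's criterion. Theorem~\ref{thm:LCI} gives that \(\mathcal{M}_{\mathrm{GPH}}\) is a local complete intersection. Hence it is Cohen--Macaulay, and in particular satisfies Serre's condition \((S_2)\). Normality is equivalent to \((R_1)+(S_2)\), so what remains is to show that the singular locus of \(\mathcal{M}_{\mathrm{GPH}}\) has codimension at least \(2\). By Corollary~\ref{codim3}, the complement of the open locus \(\mathcal{M}^1_{GPH}\) (where \(\operatorname{rank} A\ge n-1\)) already has codimension \(\ge 2\). It therefore suffices to show that the singular points of \(\mathcal{M}^1_{GPH}\) form a closed subset of codimension \(\ge 2\) in \(\mathcal{M}_{\mathrm{GPH}}\).

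\textbf{The full-rank stratum.} Over \(\operatorname{rank} A=n\) I would use the isomorphism \(h_\bullet\) onto its image from the locus of Gieseker--Higgs bundles on \(X_0\) itself, i.e.\ torsion-free Higgs sheaves that are locally free. That moduli space is smooth at stable points. Alternatively, I would argue directly from the deformation complex: by Theorem~\ref{thm:def-GPH} and Lemma~\ref{H2:Vect}, \(\dim\mathbb{H}^2(\mathcal{C}_{(E_\bullet,\phi)})=1\), and this one-dimensional cokernel is exactly the trace part. The trace part is unobstructed, since the obstruction map composed with the trace lands in the obstruction space of the determinant/abelian (\(n=1\)) problem, which vanishes. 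So the traceless obstruction space is \(0\), and an Euler characteristic computation gives \(\dim\mathbb{H}^1=2n^2(g-1)+2\), the dimension in Theorem~\ref{thm:LCI}. Hence this stratum is smooth.

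\textbf{The rank \(n-1\) stratum.} This stratum is a divisor \(\mathcal{D}\subset\mathcal{M}^1_{GPH}\). I would split it into two parts.
\begin{itemize}
\item At points of \(\mathcal{D}\) whose underlying \(E\) is stable, Lemma~\ref{hypecomp} gives \(\dim\mathbb{H}^2=1\), with cokernel spanned by \(E_{nn}\). I would compare \(\dim\mathbb{H}^1\) with the dimension of the moduli space. If the obstruction in the \(E_{nn}\) direction is nonzero, then \(\dim T=\dim\mathcal{M}+1\), and the point is a singular point of a hypersurface type. This is consistent with the Gieseker picture, where \(\mathcal{M}_{GHB}\) is normal crossings along \(X_1\)-type points.
\item So I do not expect all of \(\mathcal{D}\) to be smooth. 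Instead, I would use the fact that \(h_\bullet\colon \widetilde{X^1}\to Y^1\) is bijective and \(\widetilde{X^1}\), the normalization, is smooth there, because \(\mathcal{M}_{GHB}\) is formally smooth over the base of expanded degenerations. Then I would argue that \(h_\bullet\) is an isomorphism on \(\widetilde{X^1}\) over \(Y^1\). This uses the LCI property: \(Y^1\) is CM, and a finite birational bijection from a smooth variety onto a CM variety that is unramified, as checked by tangent spaces via \(\mathbb{H}^1\), is an isomorphism.
\end{itemize}
Finally, the points of \(\mathcal{D}\) with \(E\) not stable should form a locus of codimension \(\ge1\) inside \(\mathcal{D}\), hence codimension \(\ge 2\) overall. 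Generic stability of \(E\) on the irreducible divisor follows from openness of stability and irreducibility (Proposition~\ref{irr}).

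\textbf{Main obstacle.} The main difficulty is the rank \(n-1\) stratum: turning \(\dim\mathbb{H}^2=1\) into smoothness (or an isomorphism with the smooth normalization) at generic points of \(\mathcal{D}\). To do this, I would first try to identify the obstruction class in the \(E_{nn}\) direction with the trace/determinant obstruction, which vanishes. That would show \(\dim\mathbb{H}^1\) equals the expected dimension, and hence \(\mathcal{D}\) lies generically in the smooth locus. Once singularities are confined to codimension \(\ge2\), Serre's criterion with the LCI (hence \((S_2)\)) property yields normality.
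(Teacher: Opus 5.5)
\textbf{Gap at the rank $n-1$ stratum.} Your skeleton matches the paper's: LCI gives $S_2$, Corollary~\ref{codim3} disposes of $\operatorname{rank}A\le n-2$, and the full-rank stratum is smooth via $\mathcal M^0_{\mathrm{GHB}}$. The rank $n-1$ step, however, has a genuine gap, and it rests on a wrong premise. You write that if the obstruction in the $E_{nn}$ direction is nonzero then $\dim T=\dim\mathcal M+1$. That cannot happen.

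By Theorem~\ref{thm:def-GPH}, the Zariski tangent space at a stable point is $\mathbb{H}^1(\mathcal{C}_{(E_\bullet,\phi)})$. Its dimension is forced by the Euler characteristic, which does not see $A$ at all. The two skyscraper terms $\Hom(F,E_D/F)$ cancel, so
$\chi(\mathcal C)=\chi(\End E)-\chi(\End E\otimes\Omega^1_{\widetilde X_0}(x^++x^-))=-2n^2(g-1)$.
You also have $\mathbb{H}^0=\mathbb{C}$, $\mathbb{H}^3=0$ (a skyscraper has no $H^1$), and $\dim\mathbb{H}^2=1$ from Lemma~\ref{hypecomp}. Hence $\dim\mathbb{H}^1=2n^2(g-1)+2=\dim\mathcal M_{\mathrm{GPH}}$, exactly as in your own full-rank computation. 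Since $\mathcal M_{\mathrm{GPH}}$ is irreducible of that dimension, every such point is smooth, and the obstruction map is a posteriori zero. This is the content of the paper's remark that ``$\dim\mathbb{H}^2=1$ is the generic value.'' So the rank $n-1$ points are not hypersurface singularities. The normal-crossings picture belongs to $\mathcal M_{\mathrm{GHB}}$, whereas over $\mathcal M^1_{\mathrm{GPH}}$ you are looking at the bijective image of its normalization.

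\textbf{Your remedies are not carried out.} As written, you never close this step; both remedies are left as intentions. The route through $h_\bullet$ needs smoothness of $\widetilde X^1$ and an injectivity check of $dh_\bullet$ into $\mathbb{H}^1$. You carry out neither, and both are more work than the dimension count that makes them unnecessary.

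Your trace idea, on the other hand, is correct and can be made precise. In the paper's conventions the flag condition is $\phi_{x^-}A+A\phi_{x^+}=0$, and the residue theorem gives $\operatorname{tr}\phi_{x^+}+\operatorname{tr}\phi_{x^-}=0$. So identically
$\operatorname{tr}\bigl(\operatorname{adj}(A)(\phi_{x^-}A+A\phi_{x^+})\bigr)=\det(A)\,\bigl(\operatorname{tr}\phi_{x^-}+\operatorname{tr}\phi_{x^+}\bigr)=0$.
At $A=\operatorname{diag}(I_{n-1},0)$ one has $\operatorname{adj}(A)=E_{nn}$. Hence near rank-$(n-1)$ points the defining section lands in a rank-$(n^2-1)$ subbundle, and Lemma~\ref{hypecomp} says precisely that its linearization is onto that subbundle. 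Either argument supplies the missing $R_1$ step; without one of them the proposal does not establish normality.
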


\begin{proof}
By Theorem~\ref{thm:LCI}, the moduli space $\mathcal {M}_{\mathrm{GPH}}$ is a local complete intersection.

It is therefore enough to show that \(\mathcal{M}_{\mathrm{GPH}}\) is smooth in codimension one. Let \(\mathcal{M}^{1}_{GPH}\) be the open locus consisting of points where the generalized parabolic structure is of the form $(E, \phi, A)$ with \(\operatorname{rank} A \geq n-1\). From Proposition \ref{irr}, it follows that \(\mathcal{M}_{\mathrm{GPH}}\) is irreducible. Moreover, from Corollary \ref{codim3} we know that the complement 
\[
\mathcal{M}_{\mathrm{GPH}} \setminus \mathcal{M}^{1}_{\mathrm{GPH}}
\]
has codimension at least \(2\).

Split it into two parts:
\[
\mathcal{M}^{1}_{\mathrm{GPH}} = \mathcal{M}^{0}_{\mathrm{GPH}} \ \sqcup \ (\mathcal{M}^{1}_{\mathrm{GPH}} \setminus \mathcal{M}^{0}_{\mathrm{GPH}}),
\]
where \(\mathcal{M}^{0}_{\mathrm{GPH}}\) is the open locus where \(\operatorname{rank} A = n\) (full rank gluing). Let us denote by $\mathcal M^{0}_{\mathrm{GHB}}$ the open locus (inside $\mathcal M_{\mathrm{GHB}}$) of stable Gieseker-Higgs bundles on the original nodal curve $X_0$ and by $\mathcal M^{0}_{\mathrm{TFH}}$ the open locus (inside $\mathcal M_{\mathrm{TFH}}$) of stable Higgs bundles on the nodal curve $X_0$. 

From the definition it is clear that $\mathcal M^{0}_{\mathrm{GHB}}\cong \mathcal M^{0}_{\mathrm{TFH}}\cong \mathcal{M}^{0}_{\mathrm{GPH}}$. Since $\mathcal M^{0}_{\mathrm{GHB}}$ is smooth, \(\mathcal{M}^0_{\mathrm{GPH}}\) is smooth.

On the locus \(\mathcal{M}^{1}_{\mathrm{GPH}} \setminus \mathcal{M}^{0}_{\mathrm{GPH}}\) (i.e., where \(\operatorname{rank} A = n-1\)), consider the generic point \((E_{\bullet},\phi)\). By Lemma~\ref{hypecomp}, we have
  \[
  \dim \mathbb{H}^2(\mathcal{C}_{(E_{\bullet},\phi)}) = 1.
  \]
  Therefore the generic codimension $1$ point is smooth in $\mathcal{M}_{\mathrm{GPH}}$, because $\dim \mathbb{H}^2(\mathcal{C}_{(E_{\bullet},\phi)}) = 1$ is the generic value (see Lemma \ref{H2:Vect}). In other words, it is smooth in codimension $1$. A local complete intersection variety that is smooth in codimension one is normal (by Serre's criterion: it satisfies \(R_1\) and \(S_2\)). This completes the proof.
\end{proof}

\subsection{Applications}

\begin{corollary}\label{cor:log-symplectic}
The open locus \(\mathcal{M}^{1}_{\mathrm{GPH}} \subset \mathcal{M}_{\mathrm{GPH}}\) consisting of points where \(\operatorname{rank} A \geq n-1\) carries a natural log-symplectic form.

More precisely: the map $h_\bullet\colon\widetilde{\mathcal M}^1_{\mathrm{GHB}}\to\mathcal M^1_{\mathrm{GPH}}$ is proper and bijective, and by Theorem~\ref{NT}, $\mathcal M_{\mathrm{GPH}}$ --- and hence its open subvariety $\mathcal M^1_{\mathrm{GPH}}$ --- is normal. A proper, bijective morphism onto a normal variety is an isomorphism, by \emph{Zariski's Main Theorem}. Applying this to $h_\bullet$ gives a natural isomorphism of schemes
\[
\mathcal{M}^{1}_{\mathrm{GPH}} \ \cong \ \widetilde{\mathcal{M}}^{1}_{\mathrm{GHB}},
\]
where \(\widetilde{\mathcal{M}}^{1}_{\mathrm{GHB}}\) denotes the normalization of the moduli space of level-one Gieseker--Higgs bundles (or its coarse space).

(This is also why, in the proof of Corollary~\ref{codim3}, we were careful to establish only that $\widetilde X^1\to Y^1$ is \emph{bijective} rather than an isomorphism: at that point in the paper $\mathcal M_{\mathrm{GPH}}$ had not yet been shown to be normal, so Zariski's Main Theorem was not yet available to upgrade bijectivity to an isomorphism. It is only here, after Theorem~\ref{NT}, that the same bijective morphism can be promoted to an isomorphism.)

Since $\widetilde{\mathcal{M}}^{1}_{\mathrm{GHB}}$ admits a natural log-symplectic structure (\cite{DI}), the isomorphism above transports this to a log-symplectic form on \(\mathcal{M}^{1}_{\mathrm{GPH}}\).

Finally, as \(\mathcal{M}_{\mathrm{GPH}}\) is a normal variety (Theorem~\ref{NT}) and $\operatorname{codim}_{\mathcal M_{\mathrm{GPH}}}(\mathcal{M}_{\mathrm{GPH}} \setminus \mathcal{M}^{1}_{\mathrm{GPH}}) \geq 2$ (Corollary~\ref{codim3}), Hartogs' extension theorem --- a section of a locally free sheaf on a normal variety, defined on the complement of a closed subset of codimension $\geq 2$, extends uniquely across it --- shows that this log-symplectic form, viewed as a section of $\wedge^2 T_{\mathcal{M}_{\mathrm{GPH}}}$ over $\mathcal M^1_{\mathrm{GPH}}$, extends uniquely to a Poisson bivector on the whole moduli space \(\mathcal{M}_{\mathrm{GPH}}\).
\end{corollary}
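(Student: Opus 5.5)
The statement is Corollary~\ref{cor:log-symplectic}: a log-symplectic form exists on $\mathcal M^1_{\mathrm{GPH}}$ and extends to a Poisson bivector on all of $\mathcal M_{\mathrm{GPH}}$. I would argue in three steps: identify $\mathcal M^1_{\mathrm{GPH}}$ with a space already known to carry the structure, transport the form, then extend the associated bivector across a codimension-two set using normality.

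\textbf{Step 1.} First show that $h_\bullet\colon \widetilde{\mathcal M}^1_{\mathrm{GHB}}\to \mathcal M^1_{\mathrm{GPH}}$ is an isomorphism. It is proper, since it is the restriction of the proper map $h_\bullet$ over the open set $\mathcal M^1_{\mathrm{GPH}}$; note that $\widetilde{\mathcal M}^1_{\mathrm{GHB}}$ is exactly the preimage of $\mathcal M^1_{\mathrm{GPH}}$ by the description in the proof of Corollary~\ref{codim3}. It is bijective by that same corollary. It is birational, being an isomorphism over the rank-$n$ locus, and both sides are irreducible. The target is normal by Theorem~\ref{NT}. A proper, bijective, birational morphism onto a normal variety is then finite and birational, so Zariski's Main Theorem makes it an isomorphism.

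Bijectivity alone is not enough: one needs quasi-finiteness, hence finiteness, together with birationality. So I would check birationality explicitly. Both spaces are irreducible, by Proposition~\ref{irr} and \cite{BDP}, and they agree on the dense open set $\mathcal M^0$.

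\textbf{Step 2.} Transport the log-symplectic form of \cite{DI} on $\widetilde{\mathcal M}^1_{\mathrm{GHB}}$ through this isomorphism. Inverting the form gives a Poisson bivector $\Pi^1\in H^0(\mathcal M^1_{\mathrm{GPH}},\wedge^2 T)$. Its Jacobi identity $[\Pi^1,\Pi^1]=0$ holds because the form is closed on the symplectic locus, and in the log setting the bivector is regular precisely because the form has log poles.

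\textbf{Step 3.} Extend $\Pi^1$ to all of $\mathcal M_{\mathrm{GPH}}$. The complement of $\mathcal M^1_{\mathrm{GPH}}$ has codimension at least two by Corollary~\ref{codim3}, and $\mathcal M_{\mathrm{GPH}}$ is normal.

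This is the main obstacle. The tangent sheaf $T=\mathcal Hom(\Omega,\mathcal O)$ of a singular variety need not be locally free, so the Hartogs statement as phrased does not apply directly. I would instead work with the reflexive sheaf $(\wedge^2\Omega)^{\vee}$, or equivalently view a bivector as a biderivation $\mathcal O\times\mathcal O\to\mathcal O$. On a normal variety, reflexive sheaves satisfy the extension property $j_*j^*=\mathrm{id}$ across closed subsets of codimension at least two. Concretely, for local functions $f,g$ on an affine open $U$, the bracket $\{f,g\}$ is regular on $U\cap\mathcal M^1_{\mathrm{GPH}}$ and therefore extends to $U$, since $\mathcal O$ is $S_2$.

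Bilinearity, the Leibniz rule and the Jacobi identity all hold on a dense open set, so they hold everywhere by continuity (the relevant sheaves are torsion-free). This produces the required Poisson structure, and it is unique because it is determined on a dense open subset.
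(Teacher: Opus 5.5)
Your proposal is correct and follows the same route as the paper. Zariski's Main Theorem, using the normality of Theorem~\ref{NT}, upgrades the proper bijective map $h_\bullet\colon\widetilde{\mathcal M}^1_{\mathrm{GHB}}\to\mathcal M^1_{\mathrm{GPH}}$ to an isomorphism. The log-symplectic form of \cite{DI} is transported along it, and the resulting bivector is extended across the codimension-two complement of Corollary~\ref{codim3}.

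Your additions are sound and tighten the paper's argument. You check birationality explicitly; over $\mathbb C$ this already follows from bijectivity of integral varieties. You note that $\widetilde{\mathcal M}^1_{\mathrm{GHB}}$ is the full preimage, so the restricted map stays proper. You extend the bracket as a biderivation via the $S_2$ property of $\mathcal O$, i.e.\ inside the reflexive sheaf $(\wedge^2\Omega)^{\vee}$, rather than applying a locally-free Hartogs statement to $\wedge^2 T$, which need not be locally free.
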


\appendix

\section{Computation tool for hypercohomology}

In this appendix, we collect two useful computational tools that are repeatedly used in the hypercohomology calculations throughout the paper. The spectral sequence approach, and studying the short exact sequence of complexes provides the main technical machinery for all obstruction computations in this paper.

\subsection{Computation tool I: Via the hypercohomology spectral sequence}\label{Append}

An important computational tool is the spectral sequence arising from the deformation complex itself. Recall that
\[
\mathcal{C}^\bullet = \bigl[ \mathcal{C}^0 \to \mathcal{C}^1 \to \mathcal{C}^2 \bigr],
\]
where
\[
\mathcal{C}^0 = \pi_*\End(E), \qquad
\mathcal{C}^1 = \pi_*\End(E)\otimes \omega_{X_0} \oplus \Hom(F,Q), \qquad
\mathcal{C}^2 = \Hom(F, Q ),
\]
and \(Q\) is the quotient sheaf associated with the generalized parabolic structure at the node.

Since \(X_0\) is a projective curve, we have \(H^q(X_0, -) = 0\) for all \(q \geq 2\). The hypercohomology spectral sequence associated with \(\mathcal{C}^\bullet\) therefore takes the form
\[
E_1^{p,q} = H^q(X_0, \mathcal{C}^p) \ \Longrightarrow \ \mathbb{H}^{p+q}(X_0, \mathcal{C}^\bullet),
\]
and has only two nonzero rows (\(q=0\) and \(q=1\)). The relevant portion of the \(E_1\)-page is
\[
\begin{array}{c|ccc}
q \backslash p & 0 & 1 & 2 \\
\hline
1 & H^1(\mathcal{C}^0) & H^1(\mathcal{C}^1) & H^1(\mathcal{C}^2) \\
0 & H^0(\mathcal{C}^0) & H^0(\mathcal{C}^1) & H^0(\mathcal{C}^2)
\end{array}
\]

The differentials \(d_1\) act horizontally. For the computation of \(\mathbb{H}^2(\mathcal{C}^\bullet)\), the important terms on the \(E_2\)-page are \(E_2^{2,0}\) and \(E_2^{1,1}\), and we have
\[
\mathbb{H}^2(\mathcal{C}^\bullet) \ \simeq \ E_2^{2,0} \ \oplus \ E_2^{1,1},
\]
where
\[
E_2^{2,0} = coker\bigl( H^0(\mathcal{C}^1) \to H^0(\mathcal{C}^2) \bigr)
\]
is the cokernel of the map between global sections in degree zero, and \(E_2^{1,1}\) is the cohomology at position \((1,1)\) on the \(E_2\)-page.

Given a GPH \((E_{\bullet},\phi)\) whose underlying vector itself is stable we see that  \(E_2^{1,1} = 0\) because 
\begin{equation}\label{E-computation}
E_2^{1,1} \ \cong \ \frac{H^1(X_0, \pi_*\End(E)\otimes \omega_{X_0})}{\operatorname{im}\bigl( H^1(\End(E)) \xrightarrow{[\cdot,\phi]} H^1(\End(E)\otimes \omega_{X_0}) \bigr)} 
\end{equation}

But $H^1(X_0, \pi_*\End(E)\otimes \omega_{X_0})=H^1(\tilde{X_0}, \End(E)\otimes \Omega^1_{\tilde{X_0}}(x^++x^-))\cong H^0(\widetilde{X_0}, \End E\otimes \mathcal O(-x^+-x^-))^{\vee}$.

Because $E$ is stable, $H^0(\widetilde{X_0}, \End E\otimes \mathcal O(-x^+-x^-))=0$. Therefore, $E_2^{1,1}=0$. 

Therefore, 
\[
\dim \mathbb{H}^2(\mathcal{C}_{(E_{\bullet},\phi)}) = \dim E_2^{2,0} = \dim coker\bigl( H^0(\mathcal{C}^1) \to H^0(\mathcal{C}^2) \bigr).
\]
The main task thus reduces to understanding this cokernel, which is governed by the local behavior of the differential at the node together with the global lifting properties coming from the residue map on normalization \(\widetilde{X}_0\).

\subsection{Computation tool II: Via a short exact sequence of complexes}

We frequently make use of the following natural short exact sequence of complexes of sheaves on the nodal curve \(X_0\):
\[
0 \longrightarrow B_{\bullet}[-1] \longrightarrow \mathcal{C}_{(E_{\bullet},\phi)} \longrightarrow \mathcal{D}_{\bullet} \longrightarrow 0,
\]
where
\begin{enumerate}
\item \(B_{\bullet} = \bigl[0 \to \Hom(F(E), E_D/F(E)) \to \Hom(F(E), (E_D/F(E))) \to 0\bigr]\)
is the \emph{parabolic correction complex},

\item \(\mathcal{C}_{(E_{\bullet},\phi)} = \bigl[ \pi_*\End(E) \to \pi_*(\End(E)\otimes \omega_{X_0}) \oplus \Hom(F(E), E_D/F(E))
\to \Hom(F(E), (E_D/F(E))) \bigr]\)
is the full deformation complex of the GPH,

\item \(\mathcal{D}_{\bullet} = \bigl[ \pi_*\End(E) \to \pi_*(\End(E)\otimes \omega_{X_0}) \to 0 \bigr]\)
is the usual deformation complex of the underlying Higgs bundle \((E,\phi)\).
\end{enumerate}

This short exact sequence induces a long exact sequence in hypercohomology:
\[
\begin{tikzcd}[column sep=tiny, row sep=tiny]
0 \rar & \mathbb{H}^0(B_{\bullet}[-1]) \rar & \mathbb{H}^0(\mathcal{C}) \rar & \mathbb{H}^0(\mathcal{D}) \rar
& \mathbb{H}^1(B_{\bullet}[-1]) \rar & \mathbb{H}^1(\mathcal{C}) \rar & \mathbb{H}^1(\mathcal{D}) \\
\rar & \mathbb{H}^2(B_{\bullet}[-1]) \rar & \mathbb{H}^2(\mathcal{C}) \rar & \mathbb{H}^2(\mathcal{D}) \rar
& \mathbb{H}^3(B_{\bullet}[-1]) \rar & \mathbb{H}^3(\mathcal{C}) \rar & \mathbb{H}^3(\mathcal{D}) \rar & \cdots
\end{tikzcd}
\]

Since the sheaves in \(B_{\bullet}\) are supported at the node (and hence acyclic away from it), the hypercohomology of \(B_{\bullet}\) coincides with the ordinary cohomology of its global sections complex. Explicitly, we have:
\begin{align*}
\mathbb{H}^0(B_{\bullet}[-1]) &= 0, \\
\mathbb{H}^1(B_{\bullet}[-1]) &= H^0\bigl(\Hom(F(E), E_D/F(E))\bigr)^{\phi}
= \bigl\{\psi \in \Hom(F(E), E_D/F(E)) \ \big|\ [\psi, \phi] = 0 \bigr\}, \\
\mathbb{H}^2(B_{\bullet}[-1]) &= \frac{\Hom(F(E), (E_D/F(E))\otimes \omega_{X_0})}{\{[\psi,\phi] \mid \psi \in \Hom(F(E), E_D/F(E))\}}, \\
\mathbb{H}^i(B_{\bullet}[-1]) &= 0 \quad \forall\, i \geq 3.
\end{align*}

By Serre duality for the Hitchin complex \(\mathcal{D}_{\bullet}\), we also have
\[
\mathbb{H}^3(\mathcal{D}_{\bullet}) = 0, \qquad
\mathbb{H}^2(\mathcal{D}_{\bullet}) = \Hom\bigl((E,\phi), (E(-x^+-x^-),\phi)\bigr).
\]

A direct Čech computation shows that
\[
\mathbb{H}^0(\mathcal{C}_{(E_{\bullet},\phi)}) = \bigl\{\psi \in \Aut(E,\phi) \ \big|\ (\pi_*\psi)(F(E)) \subseteq F(E) \bigr\}.
\]
When the GPH \((E_{\bullet},\phi)\) is stable, this reduces to the scalars, so
\[
\mathbb{H}^0(\mathcal{C}_{(E_{\bullet},\phi)}) \ \cong\ \mathbb{C}.
\]

Substituting these identifications into the long exact sequence yields
\[
0 \to \mathbb{H}^0(\mathcal{C}) \to \mathbb{H}^0(\mathcal{D}) \to H^0(B_{\bullet}) \to
\mathbb{H}^1(\mathcal{C}) \to \mathbb{H}^1(\mathcal{D}) \to H^1(B_{\bullet}) \to
\mathbb{H}^2(\mathcal{C}) \to \mathbb{H}^2(\mathcal{D}) \to 0.
\]

\begin{remark}[Stable underlying Higgs bundle]
If the underlying Higgs bundle \((E,\phi)\) is stable, then \(\mathbb{H}^0(\mathcal{D}_{\bullet}) \cong \mathbb{C}\) and \(\mathbb{H}^2(\mathcal{D}_{\bullet}) = 0\). The long exact sequence then reduces further to
\[
0 \to H^0(B_{\bullet}) \to \mathbb{H}^1(\mathcal{C}_{(E_{\bullet},\phi)}) \to \mathbb{H}^1(\mathcal{D}_{\bullet}) \to
H^1(B_{\bullet}) \to \mathbb{H}^2(\mathcal{C}_{(E_{\bullet},\phi)}) \to 0.
\]
Moreover, \(\dim \mathbb{H}^1(\mathcal{D}_{\bullet})\) is constant on each connected component of the moduli space of stable Higgs bundles.
\end{remark}

\begin{remark}[Connecting homomorphism] \label{rem:connecting-map}
When the flag is concentrated at one branch, i.e., \(F(E) = E_{x^+}\) or \(F(E) = E_{x^-}\), the connecting homomorphism \(\mathbb{H}^1(\mathcal{D}_{\bullet}) \to H^1(B_{\bullet})\) vanishes. In this case we have a short exact sequence
\[
0 \to H^0(B_{\bullet}) \to \mathbb{H}^1(\mathcal{C}_{(E_{\bullet},\phi)}) \to \mathbb{H}^1(\mathcal{D}_{\bullet}) \to 0,
\]
and an isomorphism
\[
H^1(B_{\bullet}) \ \cong\ \mathbb{H}^2(\mathcal{C}_{(E_{\bullet},\phi)}).
\]
\end{remark}

\

\bibliography{biblio}
\bibliographystyle{amsalpha}

\end{document}